\renewcommand\eqref[1]{(\ref{#1})} 
\newtheorem{theorem}{Theorem}[section]
\newtheorem{lemma}[theorem]{Lemma}
\newtheorem{definition}[theorem]{Definition}
\newtheorem{remark}[theorem]{Remark}
\newtheorem{corollary}[theorem]{Corollary}
\newtheorem{thmprime}{Theorem}
\numberwithin{equation}{section}
\def\be#1 {\begin{equation} \label{#1}}
	\newcommand{\ee}{\end{equation}}
\def\sqw{\hbox{\rlap{\leavevmode\raise.3ex\hbox{$\sqcap$}}$%
		\sqcup$}}
\def\findem{\ifmmode\sqw\else{\ifhmode\unskip\fi\nobreak\hfil
		\penalty50\hskip1em\null\nobreak\hfil\sqw
		\parfillskip=0pt\finalhyphendemerits=0\endgraf}\fi}
\newcommand{\R}{{\mathbb {R}}}
\newcommand{\N}{{\mathbb N}}
\newcommand{\C}{{\mathbb C}}
\newcommand{\CL}{\mathcal L}
\newcommand{\supp}{\operatorname{supp}}
\author{Riju Basak and K. Jotsaroop}
\address{
}
\email{}
\address [Riju Basak]{
	Department of Mathematics\\
	Indian Institute of Science Education and Research\\
	Mohali, India \\
	\newline
	and
	\newline
	Department of Mathematics, National Taiwan Normal University, Taipei, Taiwan
}
\email{rijubasak52@gmail.com}
\address[Jotsaroop Kaur]{
	Department of Mathematics\\
	Indian Institute of Science Education and Research\\
	Mohali, India}
\email{jotsaroop@iisermohali.ac.in}
\keywords{Hardy spaces, Maximal function, wave operator, twisted Laplacian}
\subjclass[2010]{Primary 42A85, 42B15, Secondary 42B25}
\begin{document}
	
	\title[Hardy spaces and Wave operator]{On Hardy Spaces associated with the Twisted Laplacian and sharp estimates for the corresponding Wave Operator}
	
	\begin{abstract} We prove various equivalent characterisations of the Hardy space $H^p_{\CL}(\C^n)$ for $0<p<1$ associated with the twisted Laplacian $\mathcal{L}$ which
		generalises the result of  \cite{Mauceri-Picardello-Ricci-Hardy-space-1981} for the case $p=1$. Using the atomic characterisation of $H^p_{\CL}(\C^n)$ corresponding to the twisted convolution, we prove sharp boundedness result for the wave operator $\CL^{-\delta/2}e^{\pm it\sqrt{\CL}}$ for a fixed $t>0$ on $H^p_{\CL}(\C^n)$. More precisely we prove that it is a bounded operator from $H^p_{\CL}(\C^n)$ to $L^p(\C^n)$ for $ 0<p\leq 1$ and $\delta\geq (2n-1)\left(1/p-1/2\right)$. 
	\end{abstract}

	\setcounter{tocdepth}{2}

	\maketitle
	
	\section{Introduction and Main results}
	
	The theory of classical Hardy spaces on $\R^n$ corresponding to Euclidean convolution structure is a well studied object.
	In the seminal work of C. Fefferman and E.M. Stein (\cite{FS}), the real variable theory of classical Hardy spaces was developed involving it's various equivalent characterisations. An atomic decomposition of the classical Hardy space was obtained by 
	R. Coifman \cite{Coif1} when $n=1$ and R. Latter \cite{Latter} for $n>1$.

	Let $H^p(\R^n),0<p<\infty$ denote the classical Hardy space. Define $$\mathcal{C}=\left\{\varphi\in \mathcal{S}(\R^n): \sup_{|\alpha|\leq N}\int_{\R^n}(1+|x|^2)^M|\partial^{\alpha}\varphi(x)|dx\leq 1\right\}.$$ Here $N,M$ depend on $p,n$ and $\alpha=(\alpha_1,\ldots,\alpha_n); \alpha_i\in \N\cup \{0\}$ for $i=1$ to $n$. Define $\varphi_t(x)=t^{-n}\varphi(t^{-1}x),t>0$.
	The following characterisation is known:
	\begin{align}
		\nonumber f\in H^p(\R^n)	\iff& f\in \mathcal{S}'(\R^n),\sup_{t>0}|f*\varphi_t|\in L^p(\R^n),\varphi\in \mathcal{S}(\R^n),\int_{\R^n}\varphi \neq 0\\
		\nonumber \iff& f\in \mathcal{S}'(\R^n),\sup_
		{\varphi\in \mathcal{C}}\sup_{t>0}|f*\varphi_t|\in L^p(\R^n)	\\
		\label{eqn:EC}\iff& f\in \mathcal{S}'(\R^n), \text{there exists $p$-atoms}~ \{a_j\}_{j\geq 1} ~ \text{such that}\\ \nonumber \quad & f=\sum_{j\geq 1}\lambda_j a_j~ \text{in}~ \mathcal{S}'(\R^n)~ \text{and}~ \sum_{j\geq 1} |\lambda_j|^p<\infty.
	\end{align}
	We say $a$ is a $p$-atom if 	$\supp a\subset Q(x_0,r),r>0$ where $Q(x_0,r)$ is a cube centered at $x_0$ and length $r$, $\|a\|_{\infty}\leq r^{-n/p}$ and $\int_{\R^n}x^{\alpha}a(x)dx=0$ for all $|\alpha|\leq \lfloor n(1/p-1) \rfloor$. Here $\lfloor\cdot\rfloor$ is the greatest integer function.
	Moreover there exist constants independent of the choice of $f$ such that
	\begin{align*}
		\|\sup_{t>0}|f*\varphi_t| \|_p\simeq_{p,n}\sup_
		{\varphi\in \mathcal{C}}\sup_{t>0}\|f*\varphi_t\|_p
		\simeq_{p,n} \inf \left(\sum_{j\geq 1} |\lambda_j|^p\right)^{\frac{1}{p}}
	\end{align*}
	for $\varphi\in\mathcal{C}$.
	The infimum above is taken over all possible representations of $f$ in terms of $p$-atoms in the sense of \eqref{eqn:EC}.
	When $p>1$ the corresponding Hardy space $H^p(\R^n)$ coincides with the usual Lebesgue space $L^p(\R^n)$. 
	



	All the above characterisations are very useful to study the boundedness of Fourier multiplier operators when dealing with the case $0<p\leq 1$. For example Miyachi \cite{Miyachi-wave} proved boundedness of the wave operator on $H^p(\R^n)$ using the atomic decomposition. These characterisations are also useful in studying singular integral operators on $H^p(\R^n)$ when $0<p\leq 1$.



	
	In this article one of our main goals is to study Hardy space theory for $p<1$ corresponding to the twisted Laplacian $\mathcal{L}$ on $\C^n\equiv\R^{2n}$ defined as \begin{align*}
		\mathcal{L}=-\sum_{j=1}^n\left(\frac{\partial}{\partial x_j}+\frac{1}{2}iy_j\right)^2+\left(\frac{\partial}{\partial y_j}-\frac{1}{2}ix_j\right)^2.	
	\end{align*}
	
	The operator $\mathcal{L}$ is a second order elliptic differential operator on $\R^{2n}$ and essentially self adjoint operator on $L^2(\R^{2n})$. The operator $\mathcal{L}$ is connected to the sub-Laplacian on the Heisenberg group which is the sum of squares of the generators of left invariant vector fields on the Heisenberg group (see Section~\ref{Preliminaries}). 

	The twisted Laplacian $\mathcal{L}$ can also be viewed as a Schr\"{o}dinger operator with constant magnetic field in quantum mechanics as well. See \cite{RT-cont-magnetic},\cite{Reed-Simon}. 
	The spectral decomposition of $\mathcal{L}$ is explicitly known. It has discrete spectrum on the positive real line and is bounded away from the origin {(see Section \ref{Preliminaries})}. 
	Define $Z_j= \frac{\partial}{\partial z_{j}}-\frac{1}{2}\bar{z}_j$, $\bar{Z}_j= \frac{\partial}{\partial \bar{z}_{j}}+\frac{1}{2} z_j$, $j=1,2,\ldots, n$. Here $\frac{\partial}{\partial z_{j}}=\frac{\partial}{\partial x_{j}}-i\frac{\partial}{\partial y_{j}}$ and $\frac{\partial}{\partial \bar{z}_{j}}=\frac{\partial}{\partial x_{j}}+i\frac{\partial}{\partial y_{j}}$. Using the above expression for $\{Z_j,\bar{Z}_j\}_{j=1}^n$, we can write 
	\begin{align*}
		\mathcal{L}= - \frac{1}{2}\sum_{j=1}^{n} \left(Z_j \bar{Z}_j + \bar{Z}_j Z_j\right).
	\end{align*}
	Using the expression for $Z_j$ and $\bar{Z}_j$ above, it is easy to check that \begin{align*}[Z_j,\bar{Z}_j]=2 \text{Id}, ~[Z_j,Z_k]=0,
		[Z_j,\bar{Z}_k]=0, j\neq k\end{align*} for $j,k=1,2,\ldots,n$.
Due to the connection with the Heisenberg group, there is an induced twisted convolution on $\R^{2n}$ with which the operator $\mathcal{L}$ commutes.  

Define $$\tau_{w}f(z)=f(z-w)e^{\frac{i}{2}Im(z.\bar{w})},$$
twisted translation of $f$ by $w$. See  Section \ref{Preliminaries} for the connection of twisted translation with the Heisenberg group.
The twisted convolution of two functions $f$ and $g$ is defined by 
\begin{align*}
	(f\times g)(z)=\int_{\mathbb{\R}^{2n}} f(w)g(z-w) e^{\frac{i}{2}Im(z\cdot \bar{w})} \, dw= \int_{\mathbb{C}^n} f(w)\tau_w g(z)  \, dw.
\end{align*}
In this article, we are interested to obtain a characterisation of the Hardy space associated with the intrinsic twisted convolution structure arising from $\CL$ for the range $0 < p < 1$. 
When $p=1$, such a characterisation of the Hardy space associated with $\mathcal{L}$ was investigated in \cite{Mauceri-Picardello-Ricci-Hardy-space-1981}. 
One of the motivation to obtain various characterisations of the corresponding Hardy space is to prove sharp boundedness result for the wave operator associated to the twisted Laplacian $\mathcal{L}$. 
Let $A$ be a non-negative essentially self adjoint operator on $L^2(\R^n)$. The heat semigroup $e^{-tA}, t>0$ associated to $A$ is defined via the functional calculus as follows 
\begin{align*}
	e^{-tA}= \int_{0}^{\infty} e^{-t\lambda} dE_{\lambda}
\end{align*}
where $dE_{\lambda}$ is the spectral measure associated to $A$. Let us assume that $e^{-tA}$ maps $\mathcal{S}(\R^n)$ to itself. Let us define the Hardy space corresponding to  $A$ via the heat semigroup $e^{-tA}, t>0$ as the following
$$H^p_{A}(\R^n):=\{f\in \mathcal{S}'(\R^n): \sup_{t>0}|e^{-tA}f|\in L^p(\R^n)\}$$ for $0<p<\infty$.	
Under the assumption that the heat kernel for the operator $e^{-tA}$ satisfies Gaussian estimates, Song and Yan \cite{SongYan} proved various equivalent characterisations of $H^p_{A}(\R^n)$ including an atomic characterisation corresponding to the operator $A$.	
In the non-Euclidean setting, G.B. Folland and E.M. Stein developed the real variable theory of Hardy spaces on homogeneous groups in \cite{Folland-Stein-book} including the atomic decomposition. The theory of  Hardy spaces corresponding to a non-negative self adjoint operator on a doubling metric space has been studied by various authors. A suitable atomic characterisation of such spaces is known if the heat kernel corresponding to the operator  satisfies Gaussian type decay estimates. See \cite{Dzuibanksy-Hardy-Schrodinger,BDT-Hardy-Bessel,HLMMY-Hardy-Memoirs,Duong-Li-Hardy,SongYan,HHLLYY-2019,Bui-Duong-Ly-Hardy-finitemeasure} and references therein.	

Using functional calculus for the operator $\mathcal{L}$ we define the heat semigroup $e^{-t\mathcal{L}},t>0$ corresponding to $\mathcal{L}$. 
The operator $f\rightarrow e^{-t\mathcal{\CL}}f$ solves the heat equation corresponding to $\CL$ for $t>0$ with the initial data at $t=0$ being $f$. 
We define the Hardy space $$H^p_{\mathcal{L}}(\C^n)= \biggl\{ f\in \mathcal{S}'(\C^n) : M_{\CL}^{\text{heat}}f(z)= \sup_{0<t<\infty}\left| e^{-t\mathcal{L}}f(z)\right| \in L^p(\mathbb{C}^n)\biggr\}.$$ $\mathcal{H}^{p}_{\mathcal{L}}
(\mathbb{C}^n)$ is equipped with the norm $\|f\|_{H^p_{\mathcal{L}}(\C^n)}=\|M_{\CL}^{\text{heat}}f\|_p$.

Let us now define atoms associated to $\CL$. Let $\omega(z,w):=e^{\frac{i}{2}Im(z\cdot \bar{w})}$ on $\C^n\times \C^n$.	 Note that  $z\cdot\overline{w}=\sum_{j=1}^n z_j\overline{w}_j$. \begin{definition}\label{Def:atom}
	Let $0<p\leq 1$ and $0<\sigma < \infty$. We call a measurable function $f$ a $(p, \sigma)$-atom if there exists a cube $Q=Q(z_0,r)$ centered at $z_0$ and length $r$ such that 
	\begin{enumerate}
		\item $\supp f \subset Q$,
		\item $\|f\|_{\infty}\leq r^{-2n/p}$,
		\item $\int f(z) z^{\alpha} \bar{z}^{\beta} \omega(z_0,z) \, dz =0$ for all $|\alpha|+|\beta| \leq \mathcal{N}_{0},$
		whenever $r<\sigma$.  Here $\mathcal{N}_{0}=\lfloor 2n(1/p-1) \rfloor $ is a fixed number.
	\end{enumerate}	
\end{definition}

Let us consider a collection of functions
$$\mathcal{S}_{N}= \{ \varphi \in C^{\infty}(\R^{2n}): \supp \varphi \subset Q(0,1) \, \text{and} \, |\partial^{\alpha}\varphi| \leq 1 \, \text{for} \, \text{all} \, |\alpha|\leq N \}.$$
Here $Q(0,1)=[-1/2,1/2]^{2n}$ is the cube of side length $1$ centered at $0$.	 Let $\varphi_{t}(z):= t^{-2n}\varphi(z/t)$.	
The choice of $N$ is only dependent on $n$ and $p$. We will specify it in Section \ref{section:Hardy space}.

We prove the following theorem:
\begin{thmprime}\label{MainThmHardy}Let $f\in\mathcal{S}'(\R^n)$. For any $0<p<1$, the following are equivalent
	\begin{enumerate}
		\item [i)] $\sup_{\varphi\in \mathcal{S}_{N}}\sup_{0<t<1}|f\times \varphi_t|\in L^p(\C^n)$.
		\item [ii)]	$\sup_{t>0}|e^{-t\mathcal{L}}f|\in L^p(\C^n)$. 
		\item [iii)] Define $Tf(z,u)=f(z)e^{i u}$. $Tf\in H^p({\mathbb{H}^n_{\text{red}}})$.
		\item [iv)] $f=\sum_{j\geq 1}\lambda_j a_j$ in $\mathcal{S}'(\R^{2n})$, where $a_j$'s are $(p,1)$-atoms and $\sum_j|\lambda_j|^p<\infty$.	\end{enumerate}

	
	Moreover, $\|\sup_{\varphi\in \mathcal{S}_N}\sup_{0<t<1}|f\times \varphi_t|\|_p^p\simeq_{p}\|\sup_{t>0}|f\times p_t|\|_p^p\simeq_p \inf\{\sum_{j\geq 1}|\lambda_j|^p:f=\sum_{j\geq 1}\lambda_j a_j,~~\text{where}~ a_j ~~\text{are $(p,1)$-atoms} \}$.
	
\end{thmprime}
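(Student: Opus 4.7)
The strategy is to prove the cycle $(\text{ii})\Rightarrow(\text{iii})\Rightarrow(\text{iv})\Rightarrow(\text{i})\Rightarrow(\text{ii})$, using the reduced Heisenberg group $\mathbb{H}^n_{\text{red}}$ as the pivot. The structural identity driving the argument is that the lift $Tf(z,u)=f(z)e^{iu}$ converts the twisted convolution $f\times g$ on $\mathbb{C}^n$ into the Heisenberg group convolution $(Tf)\ast(Tg)$ restricted to the layer $\{u\in S^1\}$; in particular, if $\mathcal{L}_{\mathbb{H}}$ denotes the sub-Laplacian on $\mathbb{H}^n_{\text{red}}$, then $e^{-t\mathcal{L}_{\mathbb{H}}}(Tf)=T(e^{-t\mathcal{L}}f)$. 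Because the central variable is circular, the Hardy space on $\mathbb{H}^n_{\text{red}}$ is local in nature, which is exactly the origin of the ``$r<\sigma$'' clause in Definition~\ref{Def:atom} with $\sigma=1$.

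For $(\text{ii})\Leftrightarrow(\text{iii})$, I would identify the twisted heat kernel $p_t^{\mathcal{L}}$ with the central-frequency-$1$ Fourier coefficient of the sub-Laplacian heat kernel on $\mathbb{H}^n_{\text{red}}$, giving the pointwise identity $M^{\text{heat}}_{\mathcal{L}_{\mathbb{H}}}(Tf)(z,u)=M^{\text{heat}}_{\mathcal{L}}f(z)$ and hence comparable $L^p$ quasi-norms up to a factor $|S^1|^{1/p}$. For $(\text{iii})\Leftrightarrow(\text{iv})$, I would invoke the atomic theory of local Hardy spaces for non-negative self-adjoint operators with Gaussian heat bounds (e.g.\ adapting \cite{HLMMY-Hardy-Memoirs,SongYan} to $\mathbb{H}^n_{\text{red}}$, viewed as a polynomial-growth group in the Folland--Stein sense \cite{Folland-Stein-book}) to obtain a Heisenberg atomic decomposition $Tf=\sum_j \lambda_j A_j$. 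Averaging along the circle, $a_j(z):=\int_{-\pi}^{\pi}A_j(z,u)\,e^{-iu}\,du$, produces $(p,1)$-atoms on $\mathbb{C}^n$; conversely, any $(p,1)$-atom $a$ lifts through $T$ to an atom on $\mathbb{H}^n_{\text{red}}$ whose cancellation in the layer matches the twisted moment condition when $r<1$, while the case $r\geq 1$ is covered by the bounded-size convention of local atoms.

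For $(\text{iv})\Rightarrow(\text{i})$ I would argue atom by atom: given a $(p,1)$-atom $a$ in $Q(z_0,r)$, split $\sup_{\varphi\in\mathcal{S}_N}\sup_{0<t<1}|a\times\varphi_t|$ into $|z-z_0|\leq 2r$ (handled by H\"older and $\|a\|_\infty\leq r^{-2n/p}$) and $|z-z_0|>2r$. In the latter region, for $r<1$ a Taylor expansion of $\tau_{z_0-z}\varphi_t$ about $z_0$ combined with the twisted moment condition gains the decay $(r/|z-z_0|)^{\mathcal{N}_0+1}$, whereas for $r\geq 1$ the compact support of $\varphi_t$ (recall $0<t<1$) and the bare size suffice; $p$-subadditivity of $\|\cdot\|_p^p$ then gives $\|\sup_\varphi\sup_{0<t<1}|f\times\varphi_t|\|_p^p\lesssim\sum_j|\lambda_j|^p$. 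Finally $(\text{i})\Rightarrow(\text{ii})$ follows because for $0<t<1$ the twisted heat kernel $p_t$ is, up to rescaling by $t^{-2n}$ and a Gaussian factor, essentially some $\varphi_{t^{1/2}}$ with $\varphi\in\mathcal{S}_N$, so $|e^{-t\mathcal{L}}f|$ is dominated by the grand maximal function in~(i); the range $t\geq 1$ is absorbed using the spectral gap of $\mathcal{L}$ (spectrum bounded away from $0$), which makes $e^{-t\mathcal{L}}$ exponentially contracting in $L^p$.

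The principal obstacle will be propagating the twisted moment condition $\int a(z)z^\alpha \bar z^\beta \omega(z_0,z)\,dz=0$ through the grand-maximal estimates: twisted translation $\tau_w$ introduces the phase $e^{\frac{i}{2}\Imag(z\cdot\bar w)}$, so the ``vanishing moments'' of $a$ are centre-dependent and do not commute with the ordinary Taylor expansion one would like to perform on the kernel side. Matching $\mathcal{N}_0=\lfloor 2n(1/p-1)\rfloor$ to the precise decay required for the off-diagonal estimate, while controlling these phase errors uniformly in $z_0$ and $r$, is the heart of the argument. A secondary difficulty is verifying that the transfer between atoms on $\mathbb{H}^n_{\text{red}}$ and $(p,1)$-atoms on $\mathbb{C}^n$ via central averaging preserves the quasi-norm $\inf(\sum_j|\lambda_j|^p)^{1/p}$ without loss, so that the final norm equivalences in the ``moreover'' part of the theorem actually hold.
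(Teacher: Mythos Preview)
Your cycle has a genuine gap at $(\text{iii})\Rightarrow(\text{iv})$. Invoking \cite{HLMMY-Hardy-Memoirs,SongYan} on $\mathbb{H}^n_{\text{red}}$ yields operator-adapted atoms of the form $A=\mathcal{L}_{\mathbb{H}}^M b$, not polynomial-moment atoms; and even if you instead appeal to a Folland--Stein style decomposition (which is not available verbatim on $\mathbb{H}^n_{\text{red}}$ since the center is compact and the group is not homogeneous), the Heisenberg moment conditions $\int A_j(z,u)\,P(z,u)\,dz\,du=0$ do \emph{not} survive the central averaging $a_j(z)=\int A_j(z,u)e^{-iu}\,du$ as the centre-dependent twisted moments $\int a_j(z)\,z^{\alpha}\bar z^{\beta}\,\omega(z_0,z)\,dz=0$ required in Definition~\ref{Def:atom}. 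You flag this as a ``secondary difficulty'', but it is the heart of the matter: the paper explicitly notes that the Song--Yan atomic spaces are \emph{not} the ones needed here, and its main technical contribution is precisely to produce atoms with the correct centre-dependent twisted moments. This is done not by descending from $\mathbb{H}^n_{\text{red}}$ but by working directly on $\mathbb{C}^n$: one localises $f$ by a partition of unity, compares $\mathcal{M}_{\sigma}$ with Goldberg's local Hardy maximal function $\widetilde{\mathcal{M}}_{\sigma}$ (Lemmas~\ref{lem:compare-maximal-function}--\ref{lem:compar-maximal and atomic decomp}), and then---the key new idea---uses a projection $\Pi_Q$ onto $\{e_k(z)e^{-\frac{i}{2}\Imag(z_0\cdot\bar z)}\}$ together with a Taylor expansion of $u\mapsto e^{\frac{i}{2}\Imag(u\cdot\overline{w-z_0})}$ (Lemma~\ref{lem:center-atomic-decom}) to iteratively correct the cancellation point from an arbitrary $\vartheta\in Q(z_0,2\sigma)$ to the centre $z_0$ itself.

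Your $(\text{i})\Rightarrow(\text{ii})$ is also not right as stated: the twisted heat kernel $p_t$ is a Gaussian, hence not compactly supported, so it does not lie in $\mathcal{S}_N$ and cannot be ``essentially some $\varphi_{t^{1/2}}$''. The paper instead proves the equivalence $(\text{i})\Leftrightarrow(\text{ii})$ by first establishing $(\text{i})\Leftrightarrow(\text{iv})$ as above, then showing $(\text{iv})\Rightarrow(\text{ii})$ atom by atom (Lemma~\ref{lem:hardy-implies-heat-maximal}), and finally $(\text{ii})\Rightarrow(\text{i})$ by a Fefferman--Stein argument through the non-tangential and tangential maximal functions $M^{*}_{\mathcal{L}}$, $M^{**}_{\mathcal{L},N}$ and the representation $\psi=\int_0^1 p_{s,\mathbb{H}^n}\ast_{\mathbb{H}^n}\Theta^s\,ds$ from \cite{Folland-Stein-book} (Theorem~\ref{thm:heat-maximal-grand-maximal}). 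The equivalence with $(\text{iii})$ is, as you say, essentially the identity $e^{-t\mathcal{L}_{\mathbb{H}}}(Tf)=T(e^{-t\mathcal{L}}f)$, but in the paper it is a consequence rather than the pivot.
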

See Remark \ref{rem:reduced} for definition of $H^p({\mathbb{H}^n_{\text{red}}})$.


When $\frac{2n}{2n+1}< p\leq 1$, a similar characterisation as above in Theorem \ref{MainThmHardy} has been obtained in \cite{Huang-Wang-2014}. In this range of $\frac{2n}{2n+1}< p\leq 1$, $\mathcal{N}_0=0$, therefore the proof of such a characterisation is similar to $p=1$ case. For the atomic characterisation of the Hardy space associated to $\mathcal{L}$ with higher cancellation condition, the methods in \cite{Mauceri-Picardello-Ricci-Hardy-space-1981} do not directly apply. When $p=1$, in \cite{Huang-2009} a characterisation of the Hardy space for twisted Laplacian in terms of Littlewood-Paley g-function has also been obtained.

We would also like to mention that even though the heat kernel corresponding to $\mathcal{L}$ satisfies Gaussian estimates, we cannot directly use the  the atomic space characterisation by Song-Yan in \cite{SongYan,Bui-Duong-Ly-Hardy-finitemeasure} to prove atomic space characterisation for twisted Laplacian $\mathcal{L}$ which is natural to the intrinsic structure of twisted convolution and also to obtain sharp boundedness results for the wave operator corresponding to $\mathcal{L}$.  
\begin{theorem}\label{Thm:HARDY2} Let $H^p_{\mathcal{L}}(\C^n)$ be as defined above corresponding to the twisted Laplacian $\mathcal{L}$. Then $f\in H^p_{\mathcal{L}}(\C^n)$ if and only if $f$ satisfies any of the equivalent conditions in Theorem \ref{MainThmHardy}. Moreover, we also have the norm equivalence.
	
\end{theorem}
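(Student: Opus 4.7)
The plan is to deduce Theorem~\ref{Thm:HARDY2} directly from Theorem~\ref{MainThmHardy} once the action of the heat semigroup on tempered distributions is identified with twisted convolution by the heat kernel. Condition (ii) of Theorem~\ref{MainThmHardy}, $\sup_{t>0}|e^{-t\mathcal{L}}f|\in L^p(\C^n)$, is literally the defining condition of $H^p_{\mathcal{L}}(\C^n)$, so the biconditional is immediate \emph{provided} we verify the pointwise identity
\[
e^{-t\mathcal{L}}f \;=\; f \times p_t \qquad \text{in } \mathcal{S}'(\R^{2n}),
\]
where $p_t$ denotes the twisted heat kernel associated to $\mathcal{L}$. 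Once this is in place we have $M_{\mathcal{L}}^{\text{heat}}f(z)=\sup_{t>0}|f\times p_t(z)|$, and the quantitative comparability of norms claimed in Theorem~\ref{Thm:HARDY2} can be read off directly from the chain of norm equivalences already recorded in Theorem~\ref{MainThmHardy}.

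I would establish $e^{-t\mathcal{L}}f=f\times p_t$ in three steps. First, invoke the explicit Mehler-type formula for $p_t$ (coming either from the special Hermite expansion of $\mathcal{L}$, or via the Heisenberg correspondence discussed in Section~\ref{Preliminaries} from the sub-Laplacian heat kernel on $\mathbb{H}^n$ after partial Fourier transform in the central variable). From the formula one reads off that $p_t\in\mathcal{S}(\R^{2n})$ for every $t>0$, with Schwartz seminorms locally uniform in $t$. Second, for $f\in L^2(\C^n)$ the identity $e^{-t\mathcal{L}}f=f\times p_t$ follows from the spectral decomposition of $\mathcal{L}$ in the special Hermite basis, combined with the commutation of $\mathcal{L}$ with the twisted translations $\tau_w$. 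Third, extend to $f\in\mathcal{S}'(\R^{2n})$ by duality: twisted convolution by a Schwartz function maps $\mathcal{S}$ to $\mathcal{S}$ continuously, so $f\times p_t$ can be defined on distributions through the pairing $\langle f\times p_t,\varphi\rangle=\langle f,\widetilde{p}_t\times\varphi\rangle$ for a suitable transposed kernel $\widetilde{p}_t$; since $e^{-t\mathcal{L}}$ likewise preserves $\mathcal{S}$ and $\mathcal{S}'$ by the functional calculus, the $L^2$ identity transfers to all of $\mathcal{S}'$.

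With the identification $e^{-t\mathcal{L}}f=f\times p_t$ in hand, Theorem~\ref{MainThmHardy} yields that $f\in H^p_{\mathcal{L}}(\C^n)$ if and only if $f$ satisfies any one of its conditions (i), (iii), (iv), together with the norm equivalence. The main technical obstacle is the distributional identification in the third step: twisted convolution is neither associative nor commutative in the usual sense, and the phase $\omega(z,w)=e^{\frac{i}{2}\mathrm{Im}(z\cdot\bar{w})}$ obstructs a straight symmetry argument. One must therefore carry out the transpose computation carefully, picking the correct involution $\widetilde{p}_t$, and verify that the resulting pairing agrees with the functional-calculus definition of $e^{-t\mathcal{L}}f$. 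The Schwartz decay of $p_t$ secured in the first step is precisely what makes every pairing well defined and independent of the order of operations. Past this verification, Theorem~\ref{Thm:HARDY2} reduces entirely to an application of Theorem~\ref{MainThmHardy}.
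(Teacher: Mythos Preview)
Your proposal is correct and matches the paper's approach: Theorem~\ref{Thm:HARDY2} is indeed immediate from Theorem~\ref{MainThmHardy}, since condition~(ii) there is verbatim the defining condition of $H^p_{\mathcal{L}}(\C^n)$, and the norm equivalence is the one already recorded in Theorem~\ref{MainThmHardy}. The paper does not give a separate proof of Theorem~\ref{Thm:HARDY2} at all; it simply states it as a consequence.

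Your extended discussion of the identity $e^{-t\mathcal{L}}f = f\times p_t$ on $\mathcal{S}'(\R^{2n})$ is more careful than what the paper does. The paper invokes this identity without further comment (see the proof of Lemma~\ref{lem:hardy-implies-heat-maximal}, where it writes ``We know that $e^{-t^2\mathcal{L}}f(z)=f\times p_{t^2}(z)$'' and cites Thangavelu's book for the explicit kernel). So your three-step justification---Schwartz regularity of $p_t$, the $L^2$ identity via the spectral decomposition, and extension to $\mathcal{S}'$ by duality---is additional rigor that the paper absorbs into a citation. This is not a different route, just a more explicit treatment of a point the paper takes as known. One small remark: the biconditional itself does not require the identity $e^{-t\mathcal{L}}f=f\times p_t$, since both Theorem~\ref{MainThmHardy}(ii) and the definition of $H^p_{\mathcal{L}}(\C^n)$ are phrased with $e^{-t\mathcal{L}}f$; the identity is only needed to reconcile the norm-equivalence line in Theorem~\ref{MainThmHardy}, which is written with $f\times p_t$.
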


The wave operators associated with $\mathcal{L}$ are oscillatory spectral multiplier operators given by $e^{\pm it\sqrt{\mathcal{L}}}$, $t>0$. It  also has a connection with the solution of Cauchy problem for the wave equation associated to $\mathcal{L}$. The Cauchy problem for the wave equation corresponding to $\mathcal{L}$ is given by
\begin{align}\label{eq:Cauchy-wave}
	\nonumber	\frac{\partial^2u}{\partial t^2}(x,t)+&\mathcal{L} u(x,t)=0, \, \, x\in \mathbb{R}^{2n}, t> 0,\\
	u(x,0)&=f(x), \, x\in \mathbb{R}^{2n},\\
	\nonumber \frac{\partial u}{\partial t}(x,0)&=g(x), \, x\in \mathbb{R}^{2n}.
\end{align}
Using the functional calculus for $\mathcal{L}$, the solution of \eqref{eq:Cauchy-wave} can be written  as follows 
\begin{align*}
	u(x,t)=\cos(t\sqrt{\mathcal{L}})f(x)+\mathcal{L}^{-1/2} \sin(t\sqrt{\mathcal{L}})g(x)
\end{align*}
for $x\in \mathbb{R}^{2n}$ and $t>0$ for $f,g\in L^2(\R^{2n})$. To study the $L^p$ mapping properties of $(f,g)\rightarrow u(\cdot, t)$ for fixed $t>0$, it is enough to estimate the following oscillatory spectral multiplier operators 
\begin{align*}
	\mathcal{L}^{-\delta/2} e^{\pm it\sqrt{\mathcal{L}}}
\end{align*}
for $\delta\geq 0$ and $t>0$.
We prove the following sharp result concerning the boundedness of $\mathcal{L}^{-\delta/2} e^{\pm it\sqrt{\mathcal{L}}}$. 
\begin{thmprime}\label{thm:Main-result}
	The operator $\mathcal{L}^{-\delta/2} e^{\pm it\sqrt{\mathcal{L}}}$ is bounded from $H^p_\mathcal{L}(\mathbb{C}^n)$ to $L^p(\C^n)$ for $0<p\leq 1$ with $\delta \geq (2n-1)\left(1/p-1/2\right).$
\end{thmprime}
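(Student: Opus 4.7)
The plan is to reduce, via the atomic characterisation in Theorem A.iv combined with Theorem \ref{Thm:HARDY2}, to a uniform estimate on atoms. For any decomposition $f = \sum_j \lambda_j a_j$ into $(p,1)$-atoms, the $p$-subadditivity of $\|\cdot\|_p^p$ gives
\[
\bigl\|\mathcal{L}^{-\delta/2} e^{\pm it\sqrt{\mathcal{L}}} f\bigr\|_p^p \le \sum_j |\lambda_j|^p \bigl\|\mathcal{L}^{-\delta/2} e^{\pm it\sqrt{\mathcal{L}}} a_j\bigr\|_p^p,
\]
so it suffices to prove $\bigl\|\mathcal{L}^{-\delta/2} e^{\pm it\sqrt{\mathcal{L}}} a\bigr\|_p \lesssim 1$ uniformly over $(p,1)$-atoms $a$. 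Since $\mathcal{L}$ commutes with the twisted translations $\tau_{w}$ and each $\tau_w$ acts as an $L^p$ isometry, one may assume $a$ is centred at the origin with $\supp a \subset Q(0,r)$. The argument splits according to whether $r\ge 1$ (no moment condition, none needed) or $r<1$ (twisted moments up to order $\mathcal{N}_0 = \lfloor 2n(1/p-1)\rfloor$ available, and required).

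Next, write $\mathcal{L}^{-\delta/2}e^{\pm it\sqrt{\mathcal{L}}} a = K_t \times a$, where $K_t$ is the twisted convolution kernel of the spectral multiplier $m(\lambda)=\lambda^{-\delta/2}e^{\pm it\sqrt{\lambda}}$, and decompose $m = \sum_{k\ge 0} m_k$ dyadically with $\supp m_k \subset \{\lambda \sim 2^k\}$ for $k\ge 1$. Using the special Hermite / Laguerre function representation of twisted convolution kernels of spectral multipliers of $\mathcal{L}$, derive pointwise bounds for the kernel $K_t^{(k)}$ of $m_k(\mathcal{L})$: on a wavefront annulus of width $O(2^{-k/2})$ around $|z|\sim t$ the kernel has amplitude $\lesssim 2^{k(2n-1-\delta)/2}$, with rapid decay off this set. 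The exponent $2n-1-\delta$ is precisely zero at the threshold $\delta = (2n-1)(1/p-1/2)$, which is the source of the sharp condition.

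With these kernel bounds in hand, for each atom split
\[
\int_{\mathbb{C}^n} |K_t \times a(z)|^p\, dz = \int_{E} + \int_{E^c},
\]
where $E$ is an enlargement of the expected wavefront region whose size is tuned to $r$, $t$, and the dyadic scale. On $E$ apply Hölder's inequality together with the $L^2\to L^2$ boundedness of $\mathcal{L}^{-\delta/2}e^{\pm it\sqrt{\mathcal{L}}}$ (which follows from $\mathrm{spec}(\mathcal{L})\subset[n,\infty)$ and the unitarity of $e^{\pm it\sqrt{\mathcal{L}}}$) and the standard $L^2$ atom estimate $\|a\|_2 \le r^{2n(1/2-1/p)}$. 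On $E^c$, when $r<1$, exploit the twisted cancellation: Taylor-expand $w \mapsto K_t(z-w)\,\omega(z,w)$ at $w=0$ up to order $\mathcal{N}_0$; since $\omega(0,w) = 1$, the polynomial terms produced are exactly those annihilated by the atom moment conditions of Definition \ref{Def:atom}, leaving a remainder of size $|w|^{\mathcal{N}_0+1} \lesssim r^{\mathcal{N}_0+1}$ paired with kernel derivative estimates. For $r\ge 1$ the far piece is controlled directly by off-wavefront decay. Summing the dyadic contributions produces a geometric series in $k$ which converges precisely when $\delta \ge (2n-1)(1/p-1/2)$.

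The principal obstacle is the derivation of the sharp wavefront bounds for $K_t^{(k)}$. Gaussian heat-kernel estimates are too crude near the wavefront; one needs genuinely oscillatory analysis (stationary phase applied to the Laguerre-function expansion of the kernel, or equivalently a twisted-convolution analogue of the Hadamard parametrix for the wave equation) in order to capture the $2^{k(2n-1-\delta)/2}$ amplitude at exactly the critical order. A secondary subtlety is that the atoms' moment condition in Definition \ref{Def:atom} is twisted by the cocycle $\omega(z_0,\cdot)$, so the Taylor argument must be performed while keeping the cocycle consistently in play; the identity $\omega(0,w)=1$, valid precisely because the atom has been recentred at the origin, is what aligns this step with the standard Hardy-space template.
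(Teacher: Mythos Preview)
Your overall architecture matches the paper's: reduce to atoms, split near/far the ball of radius $\sim r$, use H\"older plus $L^2$ boundedness on the near piece, and exploit the twisted moment conditions via a twisted Taylor expansion on the far piece, all after a dyadic decomposition in frequency. The recentring of the atom at the origin via twisted translation is a legitimate simplification (the paper keeps the atom at $z_0$ and works with Lemma~\ref{lem:remainder} instead).

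There is, however, a genuine gap, and you have located it yourself: the sharp wavefront bounds for the dyadic kernel pieces. Your proposal to obtain these by ``stationary phase applied to the Laguerre-function expansion'' or by ``a twisted-convolution analogue of the Hadamard parametrix'' remains a wish rather than a method---the Laguerre expansion is a \emph{sum}, and extracting the sharp oscillatory behaviour from it directly is not straightforward. The paper's route here is the subordination formula of M\"uller--Seeger (Lemma~\ref{lem:subordination-formula}): one writes
\[
\chi(\tau^{-1}\sqrt{x})e^{i\sqrt{x}} = \sqrt{\tau}\int e^{i\tau/4s}\, a_\tau(s)\, e^{isx/\tau}\,ds \;+\; \text{(nice remainder)},
\]
so that after functional calculus each dyadic piece $T_j$ becomes an $s$-integral of Schr\"odinger propagators $e^{is2^{-j}\mathcal{L}}$. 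The point is that, unlike the wave propagator, the Schr\"odinger propagator for $\mathcal{L}$ has an \emph{explicit} twisted convolution kernel $k_{s2^{-j}}(z) = c_n(\sin s2^{-j})^{-n}\exp\bigl(-\tfrac{i}{4}\cot(s2^{-j})|z|^2\bigr)$; the resulting $s$-integral carries a scalar phase on which one can run honest stationary phase, yielding $|K_j(z)| \lesssim 2^{j(2n+1)/2}(1+2^j|1-|z||)^{-N}$ (Lemma~\ref{lem:pointwise-subord-kernel-esti}; here $j$ indexes $\sqrt{\lambda}\sim 2^j$, so your $k=2j$). Your stated amplitude $2^{k(2n-1-\delta)/2}$ and the assertion that $2n-1-\delta$ vanishes at the threshold are both incorrect; the sharp exponent $\delta(n,p)$ emerges only after combining the kernel bound with the width of the wavefront annulus and the dyadic summation, not from the kernel amplitude alone.

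A second omission: for small atoms ($r<1/3$) the paper splits the frequency sum at $2^j\sim 1/r$. For $2^j>1/r$ no cancellation is used; for $2^j\le 1/r$ the twisted moments enter via Lemma~\ref{lem:remainder}, and one also needs a separate $L^2$ estimate on $\chi_1(2^{-2j}\mathcal{L})f$ (see \eqref{eq:L2-esti-atom-cutoff}) capturing the cancellation at the $L^2$ level for the near-wavefront region $A_2$. Without this extra ingredient the contribution $\mathcal{J}_2$ does not close.
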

On interpolating between $H^1_\mathcal{L}(\C^n)$ and $L^2(\C^n)$ and using duality we obtain the following corollary.
\begin{corollary}\label{Cor:Main thm}
	The operator $\mathcal{L}^{-\delta/2} e^{\pm it\sqrt{\mathcal{L}}}$ is bounded on $L^p(\C^{n})$ for $1<p<\infty$ and $\delta\geq (2n-1)|1/p-1/2|$.	
\end{corollary}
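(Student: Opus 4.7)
The plan is to derive Corollary~\ref{Cor:Main thm} via Stein's analytic interpolation theorem, using Theorem~\ref{thm:Main-result} at the endpoint $p=1$ on one side of the strip and the trivial $L^2$ bound on the other, and then closing the range $p>2$ by a duality argument.

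For $1<p\leq 2$, I would consider the analytic family
\[
T_z \;:=\; e^{(z-z_0)^2}\,\CL^{-z(2n-1)/4}\,e^{\pm it\sqrt{\CL}}, \qquad 0 \leq \Re z \leq 1,
\]
where $z_0 = 2/p - 1 \in (0,1)$ is the chosen interior parameter. On the line $\Re z = 0$ the spectral theorem makes $\CL^{-is(2n-1)/4}$ an isometry of $L^2$ and $e^{\pm it\sqrt{\CL}}$ unitary, so $T_{is}: L^2 \to L^2$ is uniformly bounded in $s \in \R$, the Gaussian factor providing the required decay in $|s|$. On the line $\Re z = 1$, Theorem~\ref{thm:Main-result} with $p=1$ supplies the hard estimate $\CL^{-(2n-1)/4}e^{\pm it\sqrt{\CL}} : H^1_\CL \to L^1$; composing with the imaginary power $\CL^{-is(2n-1)/4}$ (which acts boundedly on $H^1_\CL$ with at most polynomial growth in $s$, verifiable on atoms via the atomic characterisation in Theorem~\ref{MainThmHardy}), one obtains $T_{1+is}: H^1_\CL \to L^1$ with admissible growth. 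Stein's theorem together with the identification $[H^1_\CL, L^2]_\theta = L^p$ (which again follows from the atomic decomposition) then gives $L^p$-boundedness of $T_{z_0}$, with the interpolated exponent
\[
\delta(z_0) \;=\; \frac{z_0(2n-1)}{2} \;=\; (2n-1)\left(\frac{1}{p} - \frac{1}{2}\right),
\]
which proves the claim for $1 < p \leq 2$.

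For $2 \leq p < \infty$, duality closes the argument. Since $\CL$ is self-adjoint and $(e^{it\sqrt{\CL}})^* = e^{-it\sqrt{\CL}}$, the $L^2$-adjoint of $\CL^{-\delta/2} e^{it\sqrt{\CL}}$ is $\CL^{-\delta/2} e^{-it\sqrt{\CL}}$, so $L^p$-boundedness with exponent $\delta$ reduces to $L^{p'}$-boundedness of the opposite-sign variant with the same $\delta$, which has already been established for $p' = p/(p-1) \in (1,2)$. One checks $(2n-1)(1/p' - 1/2) = (2n-1)(1/2 - 1/p) = (2n-1)\,|1/p - 1/2|$, so the exponent matches the corollary's threshold exactly.

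The main obstacle I anticipate is justifying the $H^1_\CL \to L^1$ control of $T_{1+is}$ with only sub-exponential growth in $|s|$, so that the imaginary powers $\CL^{-is(2n-1)/4}$ can be legitimately absorbed into the analytic family on the $\Re z = 1$ boundary. In the presence of the Gaussian heat-kernel estimates for $\CL$ and the atomic/molecular theory developed in Theorem~\ref{MainThmHardy}, this is a Mihlin--H\"ormander-type statement obtainable by testing $\CL^{-is\alpha}$ on $(1,1)$-atoms and expanding the output into molecules, but the quantitative dependence on $s$ is the delicate point one must track with care.
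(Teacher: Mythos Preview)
Your proposal is correct and follows essentially the same route as the paper, whose entire proof is the one sentence ``On interpolating between $H^1_\mathcal{L}(\mathbb{C}^n)$ and $L^2(\mathbb{C}^n)$ and using duality we obtain the following corollary.'' Your use of Stein's analytic interpolation with the Gaussian damping factor is exactly the standard way to implement that sentence, and you have correctly isolated the one point requiring care---the at-most-polynomial growth in $s$ of $\mathcal{L}^{-is(2n-1)/4}$ on $H^1_\mathcal{L}$, which follows from a H\"ormander-type multiplier theorem for the twisted Laplacian (or, alternatively, can be absorbed directly into the proof of Theorem~\ref{thm:Main-result} by tracking a complex exponent throughout).
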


The results in Theorem \ref{thm:Main-result} and Corollary \ref{Cor:Main thm} are sharp with respect to the parameter $\delta$. The sharpness of $\delta$ in Corollary \ref{Cor:Main thm} can be proved using transplantation result due to Kenig-Stanton-Tomas in \cite{Kenig-Stanton-Tomas} and the sharpness of classical Euclidean results proved by Miyachi \cite{Miyachi-wave} and Peral \cite{Peral-wave}. The sharpness of $\delta$ in Theorem \ref{thm:Main-result} can be argued using interpolation and the sharpness of the Corollary \ref{Cor:Main thm}.
\begin{remark}
	\begin{enumerate}[(i)]
		\item In the article \cite{Naru-Thangevelu-2001}, E.K. Narayanan and S. Thangavelu studied the boundedness of the wave operators associated with the twisted Laplacian $\mathcal{L}$. They proved $L^p$ boundedness result for $1<p<\infty$ and $\delta>(2n-1)|1/p-1/2|$ (see \cite[Corollary 1]{Naru-Thangevelu-2001}). Therefore the Corollary \ref{Cor:Main thm} is the improvement of the result of \cite[Corollary 1]{Naru-Thangevelu-2001}. 
		\item In the article \cite{Bui}, the authors prove a general boundedness of oscillatory spectral multipliers of the form $m(\lambda)=(1+\lambda)^{-\delta}e^{i\lambda^{\alpha}},\alpha,\delta>0$ corresponding to a non negative self adjoint operator on $L^2(\R^n)$ whose heat kernel satisfies Gaussian type estimates. They prove that the oscillatory spectral multiplier operator is bounded on a suitable Hardy space if $\delta\geq  n\alpha|1/p-1/2|$. See Theorem 1.2 in \cite{Bui}.
	\end{enumerate}
\end{remark}


In the Euclidean setting, the sharp estimate for the wave operator on the Euclidean Hardy spaces $H^p(\R^n)$ was studied by A. Miyachi \cite{Miyachi-wave}. It was also proved independently by J.C. Peral \cite{Peral-wave} for $1\leq p <\infty$. The sharp estimate for the wave operators for the range $1\leq p <\infty$ can also be concluded from the deep result of Seeger-Sogge-Stein \cite{Seeger-Sogge-Stein}. In \cite{Seeger-Sogge-Stein}, the results are true for more general Fourier integral operators on compact manifolds.

The boundedness of wave operators has been studied in various other settings as well. D. M\"uller and E.M. Stein \cite{Muller-Stein-wave}, and later D. M\"uller and A. Seeger \cite{Segeer-Muller-wave-APDE-2015}, studied the boundedness of wave operators on the Heisenberg group $\mathbb{H}^d$($m=2d+1$ dimensional) and Heisenberg type groups. For more general two-step Carnot groups, A. Martini and D. M\"uller \cite{Martini-Muller-wave} have recently investigated the boundedness of wave operators on $L^p,p\geq 1$. In the context of compact Lie groups, the boundedness of the wave operator was examined by J. Chen, D. Fan, and L. Sun \cite{Chen-Fan-Sun-compact-wave}. Beyond group settings, the boundedness of wave operators associated with various self-adjoint operators on metric measure spaces have also been analysed. Important examples include studies on the Grushin operator \cite{Kaur-Thangavelu-Grushin-wave}, the Hermite operator \cite{Bui-Duong-Hermite-IMRN}, and general self-adjoint operators whose heat semigroup kernel satisfies Gaussian-type decay \cite{Bui}.

While many studies have been done for wave operators but the complete understanding of sharp fixed-time estimates is still unknown in many settings. To the best of our knowledge, aside from classical Euclidean settings and compact Lie groups,  the sharp fixed time estimates for wave operators on Hardy spaces in the range $0<p<1$ are not known. In this article, we will study and prove the sharp fixed time estimates of the wave operators on Hardy spaces associated with the twisted Laplacian $\mathcal{L}$ for the entire range $0<p<\infty$.

The summary of our paper is as follows. In Section~\ref{Preliminaries}, we provide background on the Heisenberg group and establish Taylor's formula for the twisted translation. Section~\ref{section:Hardy space} is devoted to proving the equivalence of various maximal functions in $L^p$ norms and to establishing the atomic characterisation of the Hardy space associated with the twisted Laplacian, which leads to the proof of Theorem~\ref{MainThmHardy}. Theorem~\ref{thm:Main-result} is proved in Section~\ref{wave}. Finally, in Section~\ref{Sec:Appendix}, we present some technical results that are used in the proof of the main theorems.

\medskip \noindent \textbf{Notations:} For two non-negative numbers $M_1$ and $M_2$, by the expression $M_1\lesssim M_2$, we mean there exists a constant $C>0$ such that $M_1\leq C M_2$. Whenever the implicit constant dependent on $\epsilon$, we write $M_1\lesssim_{\epsilon}M_2$. We write $M_1\simeq M_2$, when $M_1\lesssim M_2$ and $M_2\lesssim M_1$. We define the multi-index $\alpha=(\alpha_1,\ldots, \alpha_n)$ where $\alpha_i\in \mathbb{N}\cup\{0\}$ and $\alpha!:= \alpha_1!\ldots \alpha_n!$.

	\section{Preliminaries}\label{Preliminaries} 
	
	In this section we will state some important properties of the twisted Laplacian $\mathcal{L}$ which will be used later in the proof of our main results. Let us first introduce the Heisenberg group.
	We define $\mathbb{H}^n:=\mathbb{C}^{n}\times \mathbb{R}$, the $(2n+1)$ dimensional Heisenberg group with the group law defined by 
	\begin{align*}
		(z,t)\circ (w,s)= \left(z+w, t+s+\frac{1}{2}Im(z\cdot \bar{w})\right) \, \text{for}~ \,  (z,t),(w,s)\in \mathbb{H}^n.
	\end{align*}

	{The subspace $\{(0,t):t\in\R\}$ is the center of $\mathbb{H}^n$}.
	Let $$\mathcal{X}_j:=\frac{\partial}{\partial x_j}+\frac{1}{2}y_j\frac{\partial}{\partial t}$$
	and $$\mathcal{Y}_j:=\frac{\partial}{\partial y_j}-\frac{1}{2}x_j\frac{\partial}{\partial t}.$$ It is known that $\{\mathcal{X}_j,\mathcal{Y}_j\}_{j=1}^n$ and $\mathcal{T}=-\frac{\partial}{\partial t}$ is a basis of Lie algebra of left invariant vector fields on $\mathbb{H}^n$. Moreover $\mathcal{X}_j$ and $\mathcal{Y}_j$ satisfy the commutation relations $[\mathcal{X}_j,\mathcal{Y}_j]=\mathcal{T}$ for all $j=1,..,n$. This implies that $\{\mathcal{X}_j,\mathcal{Y}_j\}_{j=1}^n$ generate the Lie algebra of left invariant vector fields on $\mathbb{H}^n$.
	
	We define $\CL_{\mathbb{H}^n}=- \sum_{j=1}^n \left(\mathcal{X}_j^2+\mathcal{Y}_j^2\right)$, the sub-Laplacian on the Heisenberg group. It is known that $\CL_{\mathbb{H}^n}$ is a hypoelliptic operator. For more details on the Heisenberg sub-Laplacian please refer to \cite{Stein-book-1993} and \cite{Thangavelu-book-uncertainty}. Let $X_j(\lambda),Y_j(\lambda)$ be vector fields on $\R^{2n}$ satisfying $$\mathcal{X}_j(f(z)e^{i\lambda t})=X_j(\lambda)f(z)e^{i\lambda t}$$  and
	$$\mathcal{Y}_j(f(z)e^{i\lambda t})=Y_j(\lambda)f(z)e^{i\lambda t}$$ for $\lambda\neq 0$. More expicitely, for $\lambda\neq 0$ and $j=1,2,\ldots,n$,   $X_j(\lambda),Y_j(\lambda)$ are given by 
	$$ X_j(\lambda)=\frac{\partial}{\partial x_j}+\frac{i\lambda}{2}y_j, \, Y_j(\lambda)=\frac{\partial}{\partial y_j}-\frac{i\lambda}{2}x_j.$$
	
	It is easy to verify that the twisted Laplacian can be written as $$\CL=-\sum_{j=1}^n \left(X_j(1)^2+ Y_j(1)^2\right).$$ For our convenience, from now on we will write $X_j$ for $X_j(1)$, $Y_j$ for $Y_j(1)$, $\tilde{X}_j$ for $X_j(-1)$ and $\tilde{Y}_j$ for $Y_j(-1)$, for all $j=1,2,\ldots,n$.

	Recall that $\tau_{w}f(z)=f(z-w)e^{\frac{i}{2}Im(z.\bar{w})}$ is the twisted translation of $f$ by $w$. In fact, $\tau_{w}$ defined above is indeed the left translation by $(-w,0)$ on the Heisenberg group restricted on the functions of type $f(z)e^{it}$ and evaluated at $t=0$. 
	
	Using the connection of $\CL$ with the  sub-Laplacian $\CL_{\mathbb{H}^n}$ on $\mathbb{H}^n$, it is easy to check that $\CL(\tau_{w}f)(z)=\tau_{w}\CL f(z)$. Consequently, we also have $\CL(f\times g)(z)=  f\times  \CL g(z)$ for all $f,g$ in Schwartz class on $\R^{2n}$. 
	
	The operator $\mathcal{L}$ is a positive self-adjoint second-order elliptic differential operator. The spectrum of $\mathcal{L}$ is  the set $\Gamma:=\{(2k+n):k\in \N\cup\{0\}\}$.
	It is known that $\CL \varphi_k= (2k+n) \varphi _k$ for $k\geq 0$, where $\varphi_k$ are Laguerre functions of order $n-1$. 
	
	The spectral resolution of identity of  $\CL$ is given by $f=(2\pi)^{-n}\sum_{k\geq 0} f\times \varphi_k  $ for $f\in L^2(\C^n)$. We also know that \begin{equation}\label{PT}\int_{\C^n}|f(z)|^2 \,dz=(2\pi)^{-2n}\sum_{k\geq 0}\int_{\C^n}|f\times \varphi_k(z)|^2 \,dz\end{equation} where with abuse of notation $dz$ is the usual Lebesgue measure on $\R^{2n}$.
	
	Let $m$ be a bounded measurable function on $[n,\infty)$. We define the corresponding multiplier operator $m(\CL)f =(2\pi)^{-n}\sum_{k\geq 0}m(2k+n) f\times\varphi_k  $ for $f\in L^2(\C^n)$. When $m(\lambda)=\lambda^{-\delta/2}e^{\pm it\sqrt{\lambda}},\delta\geq 0$, we get the wave operator associated to $\mathcal{L}$.
	\subsection{Taylor expansion for twisted translation}	
	In this subsection, we will first state a Taylor expansion formula for the case of Heisenberg group. Then, we will derive the corresponding formula for twisted translations as a special case from the Taylor's expansion formula on the Heisenberg group.
	
	
	For any smooth function $F$ in $\mathbb{H}^n$, the Taylor series expansion w.r.t. left invariant vector fields of $F$ about the point $(z,t)$ is given by 
	\begin{align}\label{eq:Taylor-formula-Heisenberg}
		F((z,t)\circ(w,s))  = & \sum_{j\leq N} \frac{1}{j!}\left(Log(w,s)\right)^j F(z,t)  \\
		\nonumber +  \frac{1}{N!}&\int_{\theta=0}^{1}(1-\theta)^{N} \left(Log(w,s)\right)^{N+1}F((z,t)\circ (Exp (\theta Log( w, s)))  \, d\theta.
	\end{align}
	Note that $z=x+iy,x,y\in\R^n$.
	For the reference of the above Taylor's formula, see Lemma $20.3.8$ and Corollary $20.3.9$ in \cite{AB}. 
	In \cite{AB} the Taylor expansion formula is given in terms of the $Log$ and $Exp$ map. In the case of the Heisenberg group with the group operation as defined above $Log(z,t)=\sum_{i=1}^n x_i\mathcal{X}_i + y_i\mathcal{Y}_i+t \mathcal{T}$ and $Exp(Log(z,t))=(z,t)$.
	
	Let us define $X_{j+n}=Y_j$ and similarly for $\tilde{X}_{j+n}=\tilde{Y}_j$ for $j=1,2,\ldots,n$.
	Recall that $\mathcal{X}_j\left(f(z)e^{-it}\right)=\tilde{X}_jf(z)e^{-it}$ for all $j=1,2,..n$. Similar identity also follows for ${\mathcal{Y}_j}$ and {$\tilde{Y}_j$} above respectively. In fact for any power of $\mathcal{X}_j$ and $\mathcal{Y}_j$ on $F(z,t)=f(z)e^{it},$ we get the corresponding power of $X_j$ on $f$ times $e^{it}$. Similarly for $F(z,t)=f(z)e^{-it}$ with $X_j$'s replaced by $\tilde{X}_j$.
	
	
	
	Now we take the function $F(z,t)= f(z)e^{-it}$, put $s=0,w\in\C^n$ in the above expansion \eqref{eq:Taylor-formula-Heisenberg} and evaluate at $(z,0)$ 
	we get 
	\begin{align}\label{eq:Taylor-formula-twisted} 
		f(z-w)e^{\frac{i}{2}Im( z\cdot\bar{w})} & = \sum_{k=1}^N\sum_{i_1,i_2,\ldots i_k=1}^{2n} \frac{(-1)^{k}}{k!}T_{i_1,i_2,\ldots i_k}(w)f(z) \\
		\nonumber + \sum_{1\leq i_1,i_2,\ldots i_{N+1}\leq 2n } \frac{(-1)^{N+1}}{N!}& \int_{\theta=0}^{1}(1-\theta)^{N}  T_{i_1,i_2,\ldots i_{N+1}}(w){f}(z-\theta w) 
		e^{\frac{i}{2}\theta Im(z\cdot \bar{w})} d\theta,
	\end{align} 
	where $$T_{i_1,i_2,\ldots i_k}(w)f(z)=u_{i_1}u_{i_2}\ldots u_{i_k}\tilde{X}_{i_1}\tilde{X}_{i_2}\ldots \tilde{X}_{i_{k}} {f}(z).$$  Let  us identify $w=v+iv'$ where $v,v'\in \R^n$ as an element in $\R^{2n}$  by the map $w\rightarrow (v_1,\ldots v_n,v_{1}',\ldots v'_{n})$. Here  $u_j=v_j,1\leq j\leq n$ and $u_{j+n}=v'_{j},1\leq j\leq n$ in terms of standard basis in $\R^{2n}$. 
	Let us define $U_{i_1,i_2,\ldots i_k}$ in a similar way as $T_{i_1,i_2,\ldots i_k}$ with $\tilde{X}$  replaced by $X$.
	The expansion \eqref{eq:Taylor-formula-twisted} is the Taylor expansion of $f$ corresponding to the twisted translation. 
	
	We also need the following version of Taylor expansion for the twisted translation.
	Consider $F(z,t)=f(z)e^{it}$. The Taylor expansion of $F$ at $(z,t)=(-a,0)$ and $(w,s)=(b,0)$ is
	\begin{align}\label{eq:Taylor-formula-twisted2} 
		f(b-a)e^{\frac{i}{2}Im( b\cdot\bar{a})} & = \sum_{k=1}^N\sum_{i_1,i_2,\ldots i_k=1}^{2n} \frac{1}{k!}U_{i_1,i_2,\ldots i_k}(b)f(-a) \\
		\nonumber + \sum_{1\leq i_1,i_2,\ldots i_{N+1}\leq 2n } \frac{1}{N!}& \int_{\theta=0}^{1}(1-\theta)^{N}  U_{i_1,i_2,\ldots i_{N+1}}(b){f}(-a+\theta b) 
		e^{\frac{i}{2}\theta Im(b\cdot \bar{a})} d\theta.
	\end{align} 
	{		\section{Hardy spaces and its atomic decomposition}\label{section:Hardy space}
		For a given $0<\sigma \leq \infty$, let us define a grand maximal function 
		\begin{align*}
			\mathcal{M}_{\sigma}^{N}f(z)=\sup_{\varphi \in \mathcal{S}_{N}} \sup_{0<t<\sigma} \left| f\times \varphi_{t}  (z) \right|.
		\end{align*}
		
		The choice of $N$ (sufficiently large) is a technical condition which only depends on $p$ and $n$.  So from now on we will fix $N$ and we shall write the above grand maximal function as $\mathcal{M}_{\sigma}$.  With abuse of notation when $\sigma = 1$, we write $\mathcal{M}_{1}$ as $\mathcal{M}$.
		
		For $0<\sigma \leq \infty$ and $0<p\leq 1$, we define the twisted Hardy space as 
		\begin{align*}
			H^p_{\mathcal{L},*,\sigma}(\mathbb{C}^n) := \left\{ f : f\in \mathcal{S}'(\R^{2n}), \|f\|_{H^p_{\CL,\sigma}(\mathbb{C}^n)}= \|\mathcal{M}_{\sigma}f\|_{L^p} < \infty \right\}.
		\end{align*}
		For $\sigma=1$, we write the Hardy space $H^p_{\mathcal{L},*,1}(\mathbb{C}^n)$ as $H^p_{\mathcal{L},*}(\mathbb{C}^n)$. We call this twisted Hardy space as the maximal function in its definition involves twisted convolution with functions from $\mathcal{S}_N$ defined in the introduction.
		
		
		
		Let us define the atomic Hardy space $H_{\CL, at, \sigma}^p(\mathbb{C}^n)$ as follows
		\begin{align*}
			H_{\CL, at, \sigma}^p(\mathbb{C}^n)= \left\{f= \sum_{j}c_{j}a_{j}: {a_{j}}'s \, \text{are} \, (p, \sigma)\, \text{-atom} \,\text{and} \, \sum_{j}|c_{j}|^{p}<\infty  \right\}  
		\end{align*}
		
		and the ``norm" in this space is defined by 
		\begin{align*}
			\|f\|_{H_{\CL, at, \sigma}^p(\mathbb{C}^n)} = \inf \left\{ \left(\sum_{j} |c_{j}|^p \right)^{1/p}: f= \sum_{j}c_{j}a_{j} \in H_{\CL, at, \sigma}^p(\mathbb{C}^n) \right\}.
		\end{align*}
		For the definition of $(p,\sigma)$ atom, see definition \ref{Def:atom}.
		Similarly, when $\sigma=1$, we just write the above space as $H_{\CL, at}^p(\mathbb{C}^n)$.
		We denote $H_{\CL, at, 1}^p(\mathbb{C}^n)$ as  $H_{\CL, at}^p(\mathbb{C}^n)$.
		We shall prove the following characterisation theorems for the Hardy spaces defined above for $0<p\leq 1$.
		
		One of the first characterisation theorem is: 
		\begin{theorem}\label{thm:charac-hardy-spaces} Let $f\in\mathcal{S}'(\R^{2n})$. For $0<p<1$,  $\mathcal{M}f\in L^p(\C^{n})$ if and only if $f\in H_{\CL, at}^p(\mathbb{C}^n)$. Moreover there exist universal constants $C,C'$ dependent on $p,n$ such that 
			$$C\|\mathcal{M}f\|_p^p\leq \sum_{j}|\lambda_j|^p\leq C'\|\mathcal{M}f\|_p^p$$
			where $f=\sum_j\lambda_j a_j$ as in the definition of $H_{\CL, at}^p(\mathbb{C}^n).$
		\end{theorem}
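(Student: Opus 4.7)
The plan is to prove the two directions separately, mirroring the classical Fefferman--Stein / Coifman--Latter scheme but adapted to the twisted convolution structure.

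\textbf{Atomic $\Rightarrow$ Maximal.} Given a $(p,1)$-atom $a$ supported in $Q = Q(z_0,r)$ with $r<1$, I would split the estimate of $\mathcal{M}a$ into a local piece on $Q^{*} := Q(z_0, C\sqrt{n})$ and a tail piece on $\mathbb{C}^{n} \setminus Q^{*}$. On $Q^{*}$, use $L^{2}$ boundedness of $\mathcal{M}$ together with $\|a\|_{2} \lesssim r^{n(1-2/p)}$ and Hölder to get $\|\mathcal{M}a\|_{L^{p}(Q^{*})}^{p} \lesssim |Q^{*}|^{1-p/2}\|a\|_{2}^{p} \lesssim 1$. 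For $z \notin Q^{*}$, write
$$a \times \varphi_{t}(z) = \int_{Q} a(w)\,\varphi_{t}(z-w)\,e^{\frac{i}{2}\operatorname{Im}(z\cdot\bar{w})}\,dw$$
and apply the Taylor formula \eqref{eq:Taylor-formula-twisted2} to expand $w \mapsto \varphi_{t}(z-w)e^{\frac{i}{2}\operatorname{Im}(z\cdot\bar{w})}$ in the variable $w-z_0$ up to order $\mathcal{N}_{0}$, noticing that the resulting polynomial terms have exactly the form $(w-z_0)^{\alpha}\overline{(w-z_0)}^{\beta}\omega(z_0,w)\,h(z)$ and hence integrate against $a$ to zero by the cancellation condition. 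The remainder contains $\mathcal{N}_{0}+1$ derivatives of $\varphi_{t}$ evaluated on the segment from $z$ to $z - w$; together with $|w - z_0| \leq r$ this yields pointwise decay $\lesssim r^{\mathcal{N}_{0}+1}/|z-z_{0}|^{2n+\mathcal{N}_{0}+1}$, which lies in $L^{p}$ off $Q^{*}$ precisely because $\mathcal{N}_{0}+1 > 2n(1/p - 1)$.

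\textbf{Maximal $\Rightarrow$ Atomic.} This is the technical heart. Set $\Omega_{k} := \{z : \mathcal{M}f(z) > 2^{k}\}$, perform a Whitney decomposition $\Omega_{k} = \bigcup_{j} Q_{j}^{k}$ with $\operatorname{diam}(Q_{j}^{k}) \simeq \operatorname{dist}(Q_{j}^{k}, \Omega_{k}^{c})$, and fix a smooth partition of unity $\{\phi_{j}^{k}\}$ adapted to a mild dilate of $\{Q_{j}^{k}\}$. Following Calderón--Zygmund--Latter, decompose $f = g_{k} + \sum_{j} b_{j}^{k}$ where $g_{k}$ has $L^{\infty}$-norm $\lesssim 2^{k}$ and $b_{j}^{k}$ is obtained from $f\phi_{j}^{k}$ by subtracting its projection onto the finite-dimensional twisted polynomial space $\mathcal{P}_{j}^{k} := \operatorname{span}\{z^{\alpha}\bar{z}^{\beta}\overline{\omega(z_{j}^{k},z)}\phi_{j}^{k} : |\alpha|+|\beta| \leq \mathcal{N}_{0}\}$ (taken in the weighted $L^{2}$ inner product $\langle \cdot,\cdot\rangle_{\phi_{j}^{k}}$). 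A telescoping identity then produces $f = \sum_{k,j} \lambda_{j}^{k} a_{j}^{k}$ with $\lambda_{j}^{k} \simeq 2^{k}|Q_{j}^{k}|^{1/p}$ and each $a_{j}^{k}$ a $(p,1)$-atom on (a fixed dilate of) $Q_{j}^{k}$. Whitney cubes of side $\geq 1$ can only appear when $\Omega_{k}$ is very large, and these are handled by the standard cut-off trick, replacing the cancellation condition by the trivial one on large scales. Summing,
$$\sum_{k,j} |\lambda_{j}^{k}|^{p} \simeq \sum_{k} 2^{kp}|\Omega_{k}| \lesssim \|\mathcal{M}f\|_{p}^{p}.$$

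\textbf{Main obstacle.} The principal difficulty is the construction of the twisted atoms inside the Calderón--Zygmund decomposition. Because the phase $\omega(z_{j}^{k},z) = e^{\frac{i}{2}\operatorname{Im}(z_{j}^{k}\cdot\bar{z})}$ varies with the chosen cube, one must verify that the Gram matrix of $\{z^{\alpha}\bar z^{\beta}\overline{\omega(z_{j}^{k},z)}\}_{|\alpha|+|\beta|\leq \mathcal{N}_{0}}$ with respect to $\langle \cdot,\cdot\rangle_{\phi_{j}^{k}}$ is uniformly invertible across all Whitney cubes of side $< 1$. This is where a change of variables $z \mapsto z_{j}^{k} + r_{j}^{k}u$ and the positive-definiteness of the reference Gram matrix on $Q(0,1)$ become essential, and it is also the step that forces the scale restriction $r < 1$ in the atom definition (reflecting the spectral gap of $\mathcal{L}$, which precludes atoms with vanishing twisted moments on arbitrarily large scales). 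Combined with the Taylor remainder estimate of the first part, this uniform bound then closes the entire atomic decomposition and yields the norm equivalence.
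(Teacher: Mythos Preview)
Your Atomic $\Rightarrow$ Maximal direction matches the paper's (Lemma~\ref{lem:remainder} plus the local/tail split), though your $Q^{*}$ should be $Q(z_0,C\sqrt{n}\,r)$ rather than a cube of fixed side; otherwise the H\"older step does not give a bound uniform in $r$.

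For Maximal $\Rightarrow$ Atomic the paper takes a genuinely different route from your direct Whitney--Latter scheme. It localizes via a partition of unity on a grid of mesh $\sigma$ and shows (Lemmas~\ref{lem:compare-maximal-function}--\ref{lem:compar-maximal and atomic decomp}) that $\|\mathcal{M}_\sigma f\|_p^p \simeq \sum_j \|f\zeta_j\,\omega(z_j,\cdot)\|_{h^p_\sigma}^p$, thereby reducing to Goldberg's \emph{Euclidean} local Hardy space. Goldberg's atomic decomposition then produces atoms satisfying the twisted moment condition, but with phase center $\vartheta$ possibly offset from the cube center by up to $2\sigma$. The core technical step is Lemma~\ref{lem:center-atomic-decom}: an iterative projection scheme that corrects the center, with each residual controlled in $h^p_\sigma$ by $\lesssim \sigma^{\mathcal{N}_0+1}$ via a Taylor expansion of the phase; this is what forces $\sigma$ small. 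The payoff is that all the Calder\'on--Zygmund machinery is outsourced to the known Euclidean theory; the cost is the center-correction argument.

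Your direct approach can be made to work, but the obstacle you single out is not the real one. Since $|\omega(z_j^k,z)|\equiv 1$, the Gram matrix of $\{z^\alpha\bar z^\beta\,\overline{\omega(z_j^k,z)}\}$ in $L^2(\phi_j^k\,dz)$ is \emph{identical} to that of $\{z^\alpha\bar z^\beta\}$; the phase cancels, and uniform invertibility is the purely classical issue, resolved by centering and rescaling. The genuine twisted difficulty is elsewhere: to get $\|g_k\|_\infty\lesssim 2^k$ you must bound the projection coefficients $\langle f,\,\phi_j^k\,(w-z_j^k)^\alpha(\overline{w-z_j^k})^\beta\,\omega(z_j^k,w)\rangle$ by the \emph{twisted} grand maximal function at some $\bar z\in\Omega_k^c$, which forces you to absorb the phase mismatch $\omega(z_j^k,w)/\omega(\bar z,w)$ into the test function and to check it still lies in $\mathcal{S}_N$ after rescaling. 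This, together with the telescoping step where projections with different phase centers $z_j^k$ and $z_i^{k+1}$ interact, is where $r_j^k<1$ is actually used, and your sketch does not address it.
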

		When $p=1$, the above theorem was proved by G. Mauceri, M. A. Picardello and F. Ricci. See Theorem A in \cite{Mauceri-Picardello-Ricci-Hardy-space-1981}. When $p<1$, the analysis presented in \cite{Mauceri-Picardello-Ricci-Hardy-space-1981} is insufficient to prove Theorem \ref{thm:charac-hardy-spaces}. More specifically, when $ p < \frac{2n}{2n+1} $, the atoms satisfy higher-order cancellation conditions with a twisted translation depending on the support of the atom. To address this issue, we revisit the classical approach of R. Coifman \cite{Coif1} (see also \cite[Chapter 3]{Stein-book-1993}). In his classical approach, R. Coifman employed suitable projection operators to manage the higher-order cancellations of the atoms. 
		To achieve the appropriate atomic decomposition in our situation, we need to modify the projection operators to incorporate these twisted translations. See Lemma \ref{lem:center-atomic-decom}. 
		
		Let us define the Euclidean grand maximal function using the standard convolution as follows 
		\begin{align*}
			\widetilde{\mathcal{M}}_{\sigma}f(z)= \sup_{\varphi \in \mathcal{S}_{N}} \sup_{0<t<\sigma} |f\ast\varphi_{t}(z)|,
		\end{align*}
		where $f\ast g$ is the usual convolution on $\R^{2n}$ for a fixed sufficiently large $N$, and $0<\sigma \leq \infty$.
		
		Before proving Theorem \ref{thm:charac-hardy-spaces} we need to recall the definition of local Hardy space introduced by Goldberg in \cite{Goldberg-1979}.
		Let $h^p_{\sigma}(\mathbb{C}^n):=\{f\in \mathcal{S}'(\C^n):\|\widetilde{\mathcal{M}}_{\sigma}f\|_p<\infty\}$. Define $\|f\|_{h^p_{\sigma}}=\|\widetilde{\mathcal{M}}_{\sigma}f\|_p$. Goldberg \cite{Goldberg-1979} proved the following characterisation of local Hardy spaces ${h^p_{\sigma}(\mathbb{C}^n)}$, $0 <p\leq 1$. 
		
		\begin{theorem}\label{thm:Goldberg}
			For every tempered distribution $f$ on $\mathbb{C}^{n}$ and a fixed $\sigma>0$, the following conditions are equivalent.
			\begin{enumerate}
				\item $\widetilde{\mathcal{M}}_{\sigma}f \in L^p(\mathbb{C}^n)$
				\item $f(z)= \sum_{j} c_{j}f_{j}, \, \text{where}\, \supp f_{j}\subset Q(z_{j},r_{j})$ $$\|f_{j}\|_{\infty} \leq \left(r_{j}\right)^{-2n/p}$$
				and
				$$\, \int f_{j}(z) z^{\alpha} \bar{z}^{\beta} \, dz =0,$$ 
				$ \text{for} \, \text{all} \,\,  |\alpha|+|\beta| \leq \mathcal{N}\, \text{with}~ \, \mathcal{N} \geq \lfloor 2n(1/p-1) \rfloor , \, \text{whenever} \,\,\, r_{j}<\sigma$ and $\sum_{j}|c_j|^p<\infty.$
			\end{enumerate}
			
			Moreover $\|f\|_{h^p_{\sigma}}\simeq_{p,n} \inf \left\{ \left(\sum_{j} |c_{j}|^p \right)^{1/p}: f= \sum_{j}c_{j}f_{j}, f_j's~\text{are as in $(2)$ above}~ \right\}$.
		\end{theorem}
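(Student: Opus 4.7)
The plan is to prove the two directions separately, following the classical Calder\'on--Zygmund blueprint adapted by Goldberg to the local setting. Direction $(2)\Rightarrow(1)$ is handled by pointwise maximal-function estimates on each building block; direction $(1)\Rightarrow(2)$ requires a Whitney-type decomposition of the level sets of $\widetilde{\mathcal{M}}_\sigma f$ and a careful construction of the atoms. The feature that distinguishes the local from the global theory is that cancellation is only demanded for atoms supported on cubes of side length smaller than $\sigma$, which matches the truncation $0<t<\sigma$ in the maximal function.

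For $(2)\Rightarrow(1)$, I would show that each building block $f_j$ satisfies $\|\widetilde{\mathcal{M}}_\sigma f_j\|_p \lesssim 1$ uniformly, and then sum in $\ell^p$. Fix $f_j$ supported in $Q_j=Q(z_j,r_j)$. On the dilated cube $Q_j^*=Q(z_j, C r_j)$, a brutal bound $\widetilde{\mathcal{M}}_\sigma f_j \lesssim \mathcal{M}_{HL} f_j$ combined with $\|f_j\|_\infty \leq r_j^{-2n/p}$ gives an $L^p$ contribution of order $1$. Off $Q_j^*$, if $r_j<\sigma$ the Euclidean Taylor expansion of $\varphi_t(z-\cdot)$ to order $\mathcal{N}$ paired with the moment vanishing $\int f_j z^\alpha\bar z^\beta\,dz=0$ for $|\alpha|+|\beta|\le \mathcal{N}$ yields the decay
\begin{equation*}
|f_j \ast \varphi_t(z)| \lesssim \frac{r_j^{\,2n+\mathcal{N}+1}\,\|f_j\|_\infty}{(r_j+|z-z_j|)^{2n+\mathcal{N}+1}},
\end{equation*}
which is $L^p$-integrable provided $\mathcal{N}\geq \lfloor 2n(1/p-1)\rfloor$. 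If $r_j\ge\sigma$, the truncation $t<\sigma\le r_j$ forces $\supp \varphi_t(z-\cdot)\subset Q(z,\sigma)$, hence $f_j\ast\varphi_t(z)=0$ whenever $\dist(z,Q_j)>\sigma$, so $\widetilde{\mathcal{M}}_\sigma f_j$ is supported in a slightly enlarged cube and the $L^\infty$ bound together with $\|f_j\|_\infty\le r_j^{-2n/p}$ again gives $L^p$ norm of order $1$. Summing over $j$ with the $\ell^p$ quasi-triangle inequality completes this direction.

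For $(1)\Rightarrow(2)$, set $\Omega_k=\{z:\widetilde{\mathcal{M}}_\sigma f(z)>2^k\}$, perform a Whitney decomposition $\Omega_k=\bigcup_i Q_i^k$ with cubes of diameter comparable to $\dist(Q_i^k,\Omega_k^c)$, and build a smooth partition of unity $\{\eta_i^k\}$ subordinate to a mild dilation of $\{Q_i^k\}$. The natural candidate decomposition is the telescoping series obtained from the differences between successive Calder\'on--Zygmund approximations of $f$ at levels $2^k$. For each Whitney cube of side length less than $\sigma$, I would further subtract a polynomial-type correction so that the resulting piece has vanishing moments $\int p(z)z^\alpha\bar z^\beta dz=0$ for $|\alpha|+|\beta|\le\mathcal{N}$; cubes of side length $\ge\sigma$ are left as is because the local Hardy space does not require cancellation on that scale. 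The tempered-distribution sense of the resulting sum and the pointwise and norm control of the pieces follow from the standard maximal-function machinery, yielding $\sum_j|c_j|^p \lesssim \|\widetilde{\mathcal{M}}_\sigma f\|_p^p$.

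The main obstacle is not the combinatorial Whitney construction, which is routine, but the polynomial correction on small Whitney cubes together with the verification that the telescoping series converges in $\mathcal{S}'(\C^n)$ with the correct $\ell^p$ control on the coefficients. One has to choose the reproducing polynomial correction so that the subtracted piece is supported on a slightly enlarged cube, enjoys an $L^\infty$ bound proportional to $2^k$ (not $r_j^{-2n/p}$ directly), and has all the required moments; normalizing then produces the desired $(p,\sigma)$-atoms with coefficients $c_j\simeq 2^k r_j^{2n/p}$. Verifying that $\sum_{k,i}|c_i^k|^p\lesssim \sum_k 2^{kp}|\Omega_k|\simeq \|\widetilde{\mathcal{M}}_\sigma f\|_p^p$ is the quantitative payoff of this step and closes the proof.
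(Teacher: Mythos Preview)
Your outline is correct and follows the standard Calder\'on--Zygmund/Coifman--Latter blueprint that underlies Goldberg's original argument. However, the paper does not reprove this result at all: it treats Theorem~\ref{thm:Goldberg} as a citation from \cite{Goldberg-1979} for the case $\sigma=1$, and then reduces an arbitrary $\sigma>0$ to $\sigma=1$ by the scaling identity $\widetilde{\mathcal{M}}_{\sigma}f(x)=\widetilde{\mathcal{M}}_{1}f_{\sigma}(\sigma^{-1}x)$ with $f_\sigma(x)=f(\sigma x)$, noting that $f\in h^p_\sigma(\C^n)$ iff $\sigma^{2n/p}f_\sigma\in h^p_1(\C^n)$ with equal norms, and that atoms transform correspondingly.

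The comparison is therefore between a self-contained proof sketch (yours) and a black-box citation plus a one-line dilation reduction (the paper's). Your approach has the virtue of being transparent about the mechanism---in particular the distinction between small cubes (polynomial correction needed) and large cubes (no cancellation required because $t<\sigma$ kills the tail)---and it makes the $\sigma$-independence of the constants visible from the construction itself. The paper's approach is far shorter and avoids repeating known material, but it obscures \emph{why} the constants are $\sigma$-uniform: that fact is asserted and then deduced from the scaling, rather than seen directly. For the purposes of the paper, where Theorem~\ref{thm:Goldberg} is purely auxiliary, the citation route is appropriate; your version would be the right choice if the goal were an exposition of local Hardy spaces.
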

		In the above theorem the constants involved in the norm equivalence are independent of $\sigma>0$.
		In \cite{Goldberg-1979} the above theorem is stated for $\sigma=1$ which in turn implies it for any $\sigma>0$ by dilation and normalisation. A simple computation gives $\widetilde{\mathcal{M}}_{\sigma}f(x)=\widetilde{\mathcal{M}}_{1}f_{\sigma}(\sigma^{-1}x)$, where $f_{\sigma}(x)=f(\sigma x)$. In fact from the last identity involving maximal functions we get that $f\in h^p_{\sigma}(\C^n)$ if and only if $\sigma^{2n/p}f_{\sigma}\in h^p_{1}(\C^n) $ with norm equality. Once we get atomic decomposition of $\sigma^{2n/p}f_{\sigma}$ in $h^p_{1}(\C^n)$ with norms equivalence we get the corresponding characterisation of $h^p_{\sigma}(\C^n)$ as stated in Theorem \ref{thm:Goldberg} for any $\sigma>0$.
		
		To prove our Theorem \ref{thm:charac-hardy-spaces}, we first prove the following results. Let us state two useful lemma first
		
		\begin{lemma}\label{lem:compare-maximal-function}
			Let $f$ be a function such that $\supp f \subset Q(z_{0}, \sigma)$. Then there is a positive constant $C(\sigma)$ depending on $\sigma$ but independent of $z_{0}$ such that 
			\begin{align*} 
				C(\sigma)^{-1}\|\mathcal{M}_{\sigma}f\|_{p} \leq \|f(\cdot)\omega(z_{0},\cdot)\|_{h^{p}_{\sigma}} \leq C(\sigma) \|\mathcal{M}_{\sigma}f\|_{p}
			\end{align*}
			for every $0<p\leq 1$.
		\end{lemma}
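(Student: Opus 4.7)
The approach rests on the elementary factorization
\[
\omega(z_0, w) = \omega(z, w)\, e^{\frac{i}{2}Im((z_0-z)\cdot \bar w)},
\]
which says that the phase $\omega(z_0,\cdot)$ used to form $g(w) := f(w)\omega(z_0,w)$ and the phase $\omega(z,\cdot)$ appearing in the twisted convolution based at $z$ differ only by a factor that is \emph{linear} in $w$. The plan is to exploit this identity to convert back and forth between $f \times \varphi_t(z)$ and $(g \ast \varphi_t)(z)$, at the cost of multiplying the test function $\varphi_t$ by a unimodular linear phase, and then to show that this modification keeps $\varphi_t$ inside $\mathcal{S}_N$ up to a $z_0$-independent constant, in the region where the maximal functions are nonzero.

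For the first direction, I would plug the identity into $g \ast \varphi_t(z) = \int f(w)\omega(z_0,w)\varphi_t(z-w)\,dw$ and rewrite the extra phase $e^{\frac{i}{2}Im((z_0-z)\cdot\bar w)}$ as a function of $u = z - w$ (using $z\cdot\bar z\in\R$) to obtain
\[
g \ast \varphi_t(z) = e^{ic(z)}\, (f \times \widetilde\varphi_z)(z),\qquad \widetilde\varphi_z(u) := \varphi_t(u)\, e^{\frac{i}{2}Im((z-z_0)\cdot \bar u)},
\]
with $c(z) = \tfrac12 Im(z_0\cdot\bar z)\in\R$. Rescaling $u=tv$ writes $\widetilde\varphi_z = (\eta_z)_t$ with $\eta_z(v) := \varphi(v)\, e^{\frac{it}{2}Im((z-z_0)\cdot\bar v)}$, supported in $Q(0,1)$. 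Since $\supp f \subset Q(z_0,\sigma)$ and $0<t<\sigma$, both $g\ast\varphi_t(z)$ and $f\times\varphi_t(z)$ vanish outside $Q(z_0, 2\sigma)$, so in the relevant region $|z-z_0|\lesssim\sigma$ and consequently $t|z-z_0|\lesssim\sigma^2$; I would then verify that every $\partial^\alpha\eta_z$, $|\alpha|\leq N$, is bounded on $Q(0,1)$ by a constant $C(\sigma,N)$ independent of $z$ and $z_0$, so that $\eta_z/C(\sigma,N)\in\mathcal{S}_N$. This yields the pointwise bound $|g\ast\varphi_t(z)|\leq C(\sigma)\,\mathcal{M}_\sigma f(z)$. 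Taking the supremum over $\varphi\in\mathcal{S}_N$ and $t\in(0,\sigma)$, then $L^p$-norms, and invoking Theorem~\ref{thm:Goldberg} produces $\|g\|_{h^p_\sigma} \leq C(\sigma)\|\mathcal{M}_\sigma f\|_p$.

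The reverse inequality follows by the symmetric computation: writing $\omega(z,w) = \omega(z_0,w)\,e^{-\frac{i}{2}Im((z_0-z)\cdot\bar w)}$ inside $f\times\varphi_t(z)$ produces $f\times\varphi_t(z) = e^{ic'(z)}(g\ast\chi_z)(z)$ where $\chi_z(u) := \varphi_t(u)\,e^{-\frac{i}{2}Im((z-z_0)\cdot\bar u)}$ enjoys the same $C^N$-control on $Q(0,1)$. This gives $\|\mathcal{M}_\sigma f\|_p \leq C(\sigma)\|g\|_{h^p_\sigma}$ and closes the lemma. The main obstacle is the uniform control of the $C^N$-seminorms of $\eta_z$ (and its analogue in the reverse direction) in $z$ and $z_0$; this is precisely where the truncation $t<\sigma$ combined with the support restriction on $f$ is essential, since without them the derivatives of the linear phase $e^{\pm\frac{it}{2}Im((z-z_0)\cdot\bar v)}$ on $Q(0,1)$ grow like $(t|z-z_0|)^{|\alpha|}$ and cannot be bounded uniformly.
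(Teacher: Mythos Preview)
Your proof is correct and follows essentially the same route as the paper's. The only cosmetic difference is that the paper first reduces to the case $z_0=0$ by invoking that twisted translation commutes with twisted convolution (so that $g=f$ and the modifying phase is $e^{\frac{i}{2}Im(z\cdot\overline{tw})}$), whereas you keep $z_0$ general and work with the phase $e^{\frac{it}{2}Im((z-z_0)\cdot\bar v)}$; the key step --- bounding the $C^N$ seminorm of the phase-modified test function using $t<\sigma$ and $|z-z_0|\lesssim\sigma$ from the support constraint --- is identical, and both arguments yield a constant $C(\sigma)$ that is polynomial in $\sigma$ of degree at most $2N$.
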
	
		
		The constant $C(\sigma)$ above is a polynomial in $\sigma$ of degree atmost $2N$.
		Let us consider a partition of $\mathbb{C}^{n}$ into family of cubes $Q_{j}=Q(z_{j}, \sigma/2)$ and construct a $C^{\infty}$ partition of unity $\{ \zeta_{j}\}$ such that $\zeta_{j}$ is supported on $Q_{j}^{*}=Q(z_{j}, \sigma)$. The cubes $\{Q_j^*\}$ have finite intersection property and {$|\partial^{\alpha}_{z} \partial_{\bar{z}}^{\beta} \zeta_{j}(z)| \lesssim_{\alpha,\beta} \sigma^{-|\alpha|-|\beta|}$} for all $\alpha, \beta$.

		\begin{lemma}\label{lem:compar-maximal and atomic decomp}
			Let $f$ be such that $\mathcal{M}_{\sigma}f\in L^p(\mathbb{C}^n)$. Then $g_{j}(z)= f(z)\zeta_{j}(z)\omega(z_{j},z)$ is in ${h^{p}_{\sigma}(\mathbb{C}^n)}$ and $\|\mathcal{M}_{\sigma}f\|_{p}^p \simeq_{\sigma} \sum_{j} \|g_{j}\|_{h^p_{\sigma}}^p$.
		\end{lemma}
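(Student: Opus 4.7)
The plan is to factor the proof through the pointwise maximal function $\mathcal{M}_{\sigma}(f\zeta_j)$. Since $f\zeta_j$ is supported in the cube $Q_j^{*}=Q(z_j,\sigma)$, Lemma \ref{lem:compare-maximal-function} applied to $f\zeta_j$ identifies $\|g_j\|_{h^p_\sigma}=\|(f\zeta_j)\omega(z_j,\cdot)\|_{h^p_\sigma}\simeq_{\sigma}\|\mathcal{M}_{\sigma}(f\zeta_j)\|_p$. Thus it suffices to establish the norm equivalence $\|\mathcal{M}_{\sigma}f\|_p^p\simeq_{\sigma}\sum_j\|\mathcal{M}_{\sigma}(f\zeta_j)\|_p^p$, and I would prove the two directions separately.

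For the upper bound $\|\mathcal{M}_{\sigma}f\|_p^p\leq\sum_j\|\mathcal{M}_{\sigma}(f\zeta_j)\|_p^p$, I would expand $f=\sum_j f\zeta_j$ (a locally finite sum because $\{\zeta_j\}$ is a partition of unity subordinate to the cubes $Q_j^{*}$) and use linearity of the twisted convolution to write $f\times\varphi_t(z)=\sum_j(f\zeta_j)\times\varphi_t(z)$. Taking suprema over $\varphi\in\mathcal{S}_N$ and $0<t<\sigma$, then raising to the $p$-th power with $0<p\leq 1$ and invoking the elementary $p$-subadditivity $(\sum_j b_j)^p\leq\sum_j b_j^p$, yields the pointwise bound $\mathcal{M}_{\sigma}f(z)^p\leq\sum_j\mathcal{M}_{\sigma}(f\zeta_j)(z)^p$, which integrates to the desired inequality. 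For the reverse bound the key step is a reparameterization: I would write
\begin{equation*}
(f\zeta_j)\times\varphi_t(z)=\int f(u)\,\zeta_j(u)\varphi_t(z-u)\,\omega(z,u)\,du=f\times(\psi_{j,z,t})_t(z),
\end{equation*}
where $\psi_{j,z,t}(w):=\zeta_j(z-tw)\varphi(w)$. A Leibniz-rule computation combined with the bounds $|\partial^{\beta}\zeta_j|\lesssim\sigma^{-|\beta|}$ and the constraint $0<t<\sigma$ (so that $(t/\sigma)^{|\beta|}\leq 1$) shows that $\psi_{j,z,t}/C\in\mathcal{S}_N$ for some constant $C$ independent of $j$, $z$ and $t$. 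Consequently $\mathcal{M}_{\sigma}(f\zeta_j)(z)\lesssim\mathcal{M}_{\sigma}f(z)$ pointwise. Since the $t$-support of $\varphi_t$ lies in $Q(0,\sigma)$, one also sees that $\mathcal{M}_{\sigma}(f\zeta_j)$ is supported in the enlarged cube $Q(z_j,2\sigma)$. Summing the $p$-th powers over $j$ and using the bounded-overlap property of $\{Q(z_j,2\sigma)\}_j$ then gives $\sum_j\|\mathcal{M}_{\sigma}(f\zeta_j)\|_p^p\lesssim\|\mathcal{M}_{\sigma}f\|_p^p$, completing the proof after reapplying Lemma \ref{lem:compare-maximal-function}.

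The principal technical obstacle is the reparameterization step: showing that the pointwise-varying profile $w\mapsto\zeta_j(z-tw)\varphi(w)$ lies in a uniformly bounded subset of $\mathcal{S}_N$ requires careful bookkeeping of the constants produced by differentiating $\zeta_j$, and it is exactly here that the hypothesis $t<\sigma$ (which makes the grand maximal function local) is used decisively. Once this uniform bound is in hand, everything else reduces to routine manipulations with $L^p$ quasi-norms for $p\leq 1$, the finite-overlap property of the partition $\{Q_j^{*}\}$ and Lemma \ref{lem:compare-maximal-function}.
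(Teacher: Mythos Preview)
Your proposal is correct and follows essentially the same route as the paper's proof: both directions are reduced via Lemma~\ref{lem:compare-maximal-function} to comparing $\|\mathcal{M}_\sigma f\|_p^p$ with $\sum_j\|\mathcal{M}_\sigma(f\zeta_j)\|_p^p$, the forward inequality comes from $p$-subadditivity, and the reverse from the reparameterization $(f\zeta_j)\times\varphi_t(z)=f\times(\psi^z_t)_t(z)$ with $\psi^z_t(w)=\varphi(w)\zeta_j(z-tw)$ lying uniformly in $C\,\mathcal{S}_N$, followed by the bounded-overlap argument for the supports $Q(z_j,2\sigma)$.
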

		When $p=1$, the {Lemma} \ref{lem:compare-maximal-function} and {Lemma} \ref{lem:compar-maximal and atomic decomp} was proved in \cite{Mauceri-Picardello-Ricci-Hardy-space-1981}. For $0<p<1$ essentially same idea works, we are presenting the proof in the appendix ( see section \ref{Sec:Appendix}) for the sake of completion.
		\begin{remark}\label{R1}
			Note that Lemma \ref{lem:compar-maximal and atomic decomp} implies the following: Given $f$ such that $\mathcal{M}_{\sigma}f\in L^p(\C^n)$, using partition of $\C^n$ above we can first decompose $f=\sum_{j}g_j\bar{\omega}(z_j,z)$. We know that $g_j(\cdot)\in h^p_{\sigma}(\C^n)$ for each $j$. We can write the atomic decomposition of $g_j(\cdot)$ in $h^p_{\sigma}(\C^n)$ using Theorem \ref{thm:Goldberg}. 
			Therefore we conclude that for every $f$ such that $\mathcal{M}_{\sigma}f\in L^p(\C^n)$, we can write $f=\sum_{j} \eta_{j} h_{j}$ in $\mathcal{S}'(\R^{2n})$, where
			\begin{align}\label{atom condition}
				\sum_{j} |\eta_{j}|^{p}\lesssim_{n,p} C(\sigma)\|\mathcal{M}_{\sigma}f\|^{p}_{p}, \, \supp h_{j} \subseteq Q(w_{j}, r_{j}) \, \text{and} \, \|h_{j}\|_{\infty}\leq (r_{j})^{-2n/p}, 
			\end{align}
			and 
			\begin{align}\label{cancellation condition}
				&\text{whenever}~ \, r_{j}< \sigma \,, \text{there} \, \text{exists}~\, \vartheta_{j} \, \text{such} \, \text{that} \,  \vartheta_j\in Q(w_j,2\sigma) \, \text{and}\\
				\nonumber & \int h_{j}(z) z^{\alpha} \bar{z}^{\beta} e^{\frac{i}{2}Im (\vartheta_{j}\cdot \bar{z})} \, dz=0, \,\,\, \text{for} \, \text{all} \,\,\, |\alpha|+|\beta|\leq N ,\, \, \text{where} \,  N\geq \mathcal{N}_{0}.
			\end{align}
			
			Also, conversely given a sequence $\{h_{j}\}$ of functions satisfying \eqref{atom condition} and \eqref{cancellation condition} and a sequence $\{\eta_{j}\}$ satisfying $\sum_{j} |\eta_{j}|^{p}<\infty$, the function $f(z)= \sum_{j} \eta_{j} h_{j}$ satisfies $\mathcal{M}_{\sigma}f\in L^p(\C^n)$ and $ \|\mathcal{M}_{\sigma}f\|^{p}_{p} \leq C(\sigma) \sum_{j} |\eta_{j}|^{p}$. 
		\end{remark}
		Let us fix $N=2\mathcal{N}_0$ in the Remark \ref{R1} from now on. The atoms $h_j$ in the above Remark \ref{R1}  satisfy the cancellation condition  \eqref{cancellation condition} w.r.t $\vartheta_j$, which need not be the center of the cube where $h_j$ is supported. We need to replace $\vartheta_j$ in the cancellation condition \eqref{cancellation condition} by the center of the cube on which the atom is supported to get the desired characterisation in Theorem \ref{thm:charac-hardy-spaces}.
		
		\begin{lemma}\label{lem:center-atomic-decom}
			Let $f$ be a function supported on $Q(z_{0}, r), r<\sigma$ such that $$\|f\|_{\infty} \leq r^{-2n/p}$$ and 
			\begin{align}\label{NC}
				& \int f(w) w^{\alpha} \bar{w}^{\beta} e^{\frac{i}{2}Im(\vartheta\cdot \bar{w})} \, dw = 0,\,\,\, \text{for} \, \text{all} \,\,\, |\alpha|+|\beta|\leq 2\mathcal{N}_0 ,\\\nonumber &\, \,~ \text{and} ~\, \text{for} ~\, \text{some} \,~ \vartheta \,~ \text{with} ~\, \vartheta\in Q(z_{0},2\sigma).
			\end{align}	
			Let $\sigma$ be small enough (depending on $n$, $p$). Then $f$ can be decomposed as $f=\sum_{j} \eta_{j} g_{j}$, where 
			\begin{enumerate}[(i)]
				\item $\sum_{j} |\eta_{j}|^{p} \leq C$,
				\item $\supp g_{j} \subseteq Q(z_{j}, r_{j})$, $\|g_{j}\|_{\infty} \leq r_{j}^{-2n/p}$,
				\item $\int g_{j}(w) w^{\alpha} \bar{w}^{\beta} e^{\frac{i}{2}Im(z_{j}\cdot \bar{w})} \, dw = 0,\, \text{for}~ \, \text{all}~ \, |\alpha|+|\beta|\leq \mathcal{N}_0  ,\, \, \, \text{whenever} \, \, r_{j}<\sigma$.
			\end{enumerate}
			The constant $C>0$ depends on $n$ and $p$.
		\end{lemma}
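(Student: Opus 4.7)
The plan is to adapt R.~Coifman's classical Gram--Schmidt decomposition argument (as presented, e.g., in \cite[Chapter~3]{Stein-book-1993}) to the twisted setting. The key substitution is to work with \emph{twisted monomials} $(w-z_{j})^{\alpha}\overline{(w-z_{j})}^{\beta}e^{-\frac{i}{2}Im(z_{j}\cdot\bar{w})}$ rather than ordinary monomials, so that the cancellation required in (iii) becomes precisely the orthogonality of $g_{j}$ to the span of these twisted monomials on a sub-cube with center $z_{j}$. The increased input cancellation of order $2\mathcal{N}_{0}$ on $f$ about the shifted center $\vartheta$ is exactly what is needed to absorb a change-of-center computation based on a Taylor expansion of $e^{\frac{i}{2}Im((z_{j}-\vartheta)\cdot\bar{w})}$, and the smallness of $\sigma$ gives the smallness of the resulting Taylor remainder.

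Concretely, I first fix a finite family of sub-cubes $Q_{j}=Q(z_{j},r/K)$ covering $Q(z_{0},r)$ with bounded overlap, with $K=K(n,p)$ chosen at the end and $z_{j}\in Q(z_{0},r)$, together with a smooth partition of unity $\{\psi_{j}\}$ subordinate to $\{Q_{j}\}$ satisfying $|\partial^{\mu}\psi_{j}|\lesssim r^{-|\mu|}$. On each $Q_{j}$ let $V_{j}\subset L^{2}(Q_{j})$ be the span of the twisted monomials with $|\alpha|+|\beta|\leq\mathcal{N}_{0}$, and let $\Pi_{j}$ be the orthogonal projection onto $V_{j}$. Then $g_{j}:=f\psi_{j}-\Pi_{j}(f\psi_{j})$ is supported in $Q_{j}$ and, by orthogonality, satisfies (iii) about $z_{j}$. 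The key quantitative step is the bound $\|\Pi_{j}(f\psi_{j})\|_{\infty}\lesssim\sigma^{\mathcal{N}_{0}+1}r^{-2n/p}$, which together with $\|f\psi_{j}\|_{\infty}\leq r^{-2n/p}$ yields $\|g_{j}\|_{\infty}\leq (r/K)^{-2n/p}$ for $\sigma$ sufficiently small.

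For this estimate, I rewrite the twisted moment
\begin{equation*}
\mu_{j,\alpha,\beta}=\int f(w)\psi_{j}(w)(w-z_{j})^{\alpha}\overline{(w-z_{j})}^{\beta}e^{\frac{i}{2}Im(z_{j}\cdot\bar{w})}\,dw
\end{equation*}
using $e^{\frac{i}{2}Im(z_{j}\cdot\bar{w})}=e^{\frac{i}{2}Im(\vartheta\cdot\bar{w})}\cdot e^{\frac{i}{2}Im((z_{j}-\vartheta)\cdot\bar{w})}$, Taylor-expand the second factor to order $\mathcal{N}_{0}$ (as a polynomial in $w$ with coefficients of size $|z_{j}-\vartheta|^{k}\lesssim\sigma^{k}$), and likewise Taylor-expand $\psi_{j}$ about $z_{j}$ to sufficiently high order $M=M(n,p)$. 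The resulting principal terms pair the twisted phase about $\vartheta$ with polynomials in $(w,\bar{w})$ of total degree $\leq 2\mathcal{N}_{0}$, which integrate to zero against $f$ by hypothesis; the Taylor remainders contribute $O((\sigma r)^{\mathcal{N}_{0}+1})+O(K^{-(M+1)})$, and after inverting the Gram matrix of $V_{j}$ (bounded below by a positive power of $r$) one obtains the claimed $L^{\infty}$ bound.

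The main obstacle is the identity $\sum_{j}g_{j}=f$, equivalently $\sum_{j}\Pi_{j}(f\psi_{j})=0$: this is \emph{not} automatic from the independent definitions above. Following Coifman, I would define the projections iteratively---subtracting the accumulated correction at each stage rather than projecting each $f\psi_{j}$ independently---so that the construction telescopes; verifying that the telescoping terminates uses precisely the $2\mathcal{N}_{0}$-cancellation of $f$ about $\vartheta$ together with the Taylor expansion machinery above. Finally, setting $\eta_{j}:=(r/K)^{2n/p}\|g_{j}\|_{\infty}$ and $\tilde{g}_{j}:=g_{j}/\eta_{j}$ yields the required decomposition, with $\sum_{j}|\eta_{j}|^{p}\leq C$ since the number of sub-cubes is $O(K^{2n})$ and each $\eta_{j}$ is uniformly bounded.
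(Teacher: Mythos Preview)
Your approach departs substantially from the paper's, and the central quantitative step contains a gap. The claimed bound $\|\Pi_{j}(f\psi_{j})\|_{\infty}\lesssim\sigma^{\mathcal{N}_{0}+1}r^{-2n/p}$ is not produced by the argument you sketch. The hypothesis \eqref{NC} is a vanishing condition for $\int f\cdot P\cdot e^{\frac{i}{2}Im(\vartheta\cdot\bar{w})}$ with $P$ a polynomial of degree $\leq 2\mathcal{N}_{0}$; once you multiply by the cutoff $\psi_{j}$ that identity no longer applies. Taylor-expanding $\psi_{j}$ to order $M$ does not rescue this: the polynomial pieces of $\psi_{j}$ raise the total degree of the test polynomial to $2\mathcal{N}_{0}+M$, exceeding the available cancellation for every $M\geq 1$; while for $M=0$ the remainder $\psi_{j}(w)-\psi_{j}(z_{j})$ is $O(1)$ on $Q_{j}$, since a partition of unity subordinate to cubes of side $r/K$ necessarily satisfies $|\partial^{\mu}\psi_{j}|\sim (r/K)^{-|\mu|}$ (your asserted bound $r^{-|\mu|}$ is off by exactly the factor $K^{|\mu|}$ you would need). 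Consequently neither a factor of $\sigma^{\mathcal{N}_{0}+1}$ nor a factor of $K^{-(M+1)}$ actually appears, the residual $\sum_{j}\Pi_{j}(f\psi_{j})$ is not small, and the telescoping iteration you allude to does not close.

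The paper proceeds without subdividing $Q(z_{0},r)$. One projects $f$ onto twisted monomials on the \emph{single} cube $Q=Q(z_{0},r)$, obtaining $f=a^{(1)}+b^{(1)}$ with $a^{(1)}=f-\Pi_{Q}f$ already a $(p,\sigma)$-atom and $b^{(1)}=\Pi_{Q}f\in\mathcal{H}_{Q}$. Because no cutoff intervenes, the Taylor-expansion argument is now legitimate and yields $|I_{k}|\lesssim\sigma^{\mathcal{N}_{0}+1}r^{\mathcal{N}_{0}+1-2n/p}$ for the projection coefficients, hence $\|\omega(z_{0},\cdot)b^{(1)}\|_{h^{p}_{\sigma}}\leq C\sigma^{2(\mathcal{N}_{0}+1)}<\tfrac{1}{2}$ for $\sigma$ small. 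One then feeds $b^{(1)}$ back through the local Hardy space machinery (Theorem~\ref{thm:Goldberg} and Remark~\ref{R1}) to produce a new off-center atomic decomposition $b^{(1)}=\sum_{j}\nu^{(1)}_{j}h^{(1)}_{j}$ with $\sum_{j}|\nu^{(1)}_{j}|^{p}\lesssim 2^{-p}$, each $h^{(1)}_{j}$ again satisfying \eqref{NC} for some new $\vartheta_{j}$. Iterating this gives $f=\sum_{k}a^{(k)}$ with geometric decay of the $h^{p}_{\sigma}$-norms. The iteration thus runs through $h^{p}_{\sigma}$ rather than through a spatial subdivision; this is the mechanism your outline is missing.
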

		\begin{proof}
			Let $L^2(Q)$ be the Hilbert space of square integrable functions on a cube $Q$ with the norm \begin{align*}
				\|f\|_{Q}^{2}= \frac{1}{|Q|}\int_{Q} |f(z)|^{2}\, dz .
			\end{align*}  Let $\mathcal{R}$ be the vector space of all polynomial of degree upto $\mathcal{N}_0 $. We equip the space $\mathcal{R}$ with the Hilbert space norm $\|\cdot\|_{Q}$ defined above. Let us denote the space $\{\mathcal{R},\|\cdot\|_{Q}\}$ by $\mathcal{R}_{Q}$. We fix an orthonormal basis $\{e_{1}, \ldots e_{J}\}$ of $\mathcal{R}_{Q}$, where $dim (\mathcal{R}_{Q})= J$.
			
			When $Q=Q(0,1)$, using equivalence of any two norms on a finite dimensional vector space we get 
			\begin{align*}
				\sup_{w\in Q(0,1)} |P(w)| \leq C \|P\|_{Q(0,1)}.
			\end{align*}
			Define $Tf(x)=f(rx+z)$. Note that $T$ is an isometry onto from $\mathcal{R}_{Q(z,r)}$ to $\mathcal{R}_{Q(0,1)}$. The map $T$ depends on $z,r$. 
			Therefore, using the map $T$ for a given $Q(z,r)$, we get the following norm equivalence for all $Q$  \begin{align*}
				\sup_{w\in Q} |P(w)| \leq C \|P\|_{Q},
			\end{align*}
			where the constant $C$ in the above inequality is independent of the choice of $Q$.
			The above inequality further implies that
			\begin{align}\label{eq:bound-polynomial}
				\sup_{z\in Q} |e_{k}(z)| \leq C
			\end{align}
			for all $1\leq k \leq J$.
			
			Let us now fix $Q=Q(z_0,r)$. Define $h_k(w)=e_{k}(w)e^{-\frac{i}{2}Im(z_{0}\cdot \bar{w})}$ for $k=1,2,..,J$. Note that $( h_j,h_l)_{Q}=(e_j, e_l)_{Q}$. Let  $\mathcal{H}_{Q}$ be the linear span of the elements $\{h_k\}_{k=1}^{J}$. Clearly, $\mathcal{H}_{Q}\subset L^2(Q)$. Since $\{h_k\}_{k=1}^{J}$ are linearly independent, they also form an orthonormal basis for the space $\mathcal{H}_{Q}$.
			
			Let us consider a projection operator $\Pi_{Q}$ from $L^2(Q)$ onto the space $\mathcal{H}_{Q}$ as follows 
			\begin{align}\label{eqn:projection}
				\left( \Pi_{Q}f\right)(z)= \sum_{k=1}^{J}(f, h_k)_{Q} ~~e_{k}(z) e^{-\frac{i}{2}Im(z_{0}\cdot \bar{z})}\chi_{Q}(z).
			\end{align}
			
			We can write \begin{align}\label{eq:projection}f(z)= a^{(1)}(z)+ b^{(1)}(z),\end{align} where
			
			\begin{align*}
				b^{(1)}(z)= \Pi_{Q}(f)(z), \quad a^{(1)}(z)= f(z)- \Pi_{Q}(f)(z).
			\end{align*}
			
			By the definition of the projection $ \Pi_{Q}$, it is clear that $\frac{1}{CJ+1} a^{(1)}(z)$ satisfies the conditions $(ii)$ and $(iii)$.
			
			For $b^{(1)}(z)$, using the estimate \eqref{eq:bound-polynomial}, we get 
			\begin{align}\label{eq:bound-b1}
				|b^{(1)}(z)| \leq C\sum_{k=1}^{J} \left| \frac{1}{|Q|}\int_{Q} f(w) \overline{e_{k}(w)} e^{\frac{i}{2}Im(z_{0}\cdot \bar{w})} \, dw \right|
			\end{align}
			for $z\in Q$.
			
			We shall consider each term of the sum on the right side of \eqref{eq:bound-b1} separately. 
			We would like to approximate each quantity in the sum by $(p,\sigma)$ atoms 
			by utilising the condition {\eqref{NC}} and the fact that $\vartheta\in Q(z_0,2\sigma)$. In fact it turns out that for $\sigma$ small enough (depending on $n$ and $p$) we will be able to achieve our target. 
			
			First observe that the Taylor series expansion (w.r.t. standard translation) of the function $u\rightarrow e^{\frac{i}{2}Im(u\cdot \overline{w-z_{0}})}$ about the point $\vartheta\in\C^n$ is given by 
			\begin{align}\label{eq:Taylor-series-expo}
				&e^{\frac{i}{2}Im(u\cdot \overline{w-z_{0}})}
				=  \sum_{|\alpha|+|\beta|\leq \mathcal{N}_0} \frac{1}{\alpha! \beta!} \left(\frac{i}{2}\right)^{|\alpha|+|\beta|} (-1)^{|\alpha|} e^{\frac{i}{2}Im(\vartheta\cdot \overline{w-z_{0}})} \times\\\nonumber & \left( w-z_{0}\right)_{2}^{\alpha} \left( w-z_{0}\right)_{1}^{\beta} \left(u-\vartheta \right)_{1}^{\alpha} \left(u-\vartheta \right)_{2}^{\beta}+ (\mathcal{N}_0+1) \sum_{|\alpha|+|\beta|= \mathcal{N}_0+1}\frac{1}{\alpha! \beta!} \left(\frac{i}{2}\right)^{|\alpha|+|\beta|} \times \\
				\nonumber  & (-1)^{|\alpha|} \int_{0}^{1} (1-s)^{\mathcal{N}_0} e^{\frac{i}{2}Im((\vartheta + s (u-\vartheta))\cdot \overline{w-z_{0}})} \left( w-z_{0}\right)_{2}^{\alpha} \left( w-z_{0}\right)_{1}^{\beta} 
				\left(u-\vartheta \right)_{1}^{\alpha} \left(u-\vartheta \right)_{2}^{\beta} \, ds.
			\end{align}
			
			In the above expansion, we have used the following notations
			\begin{align*}
				w-z_{0}=&(w-z_{0})_{1} + i (w-z_{0})_{2}, \, (w-z_{0})_{1}, (w-z_{0})_{2}\in \mathbb{R}^{n},\\
				u-\vartheta=& (u-\vartheta)_{1}+ i (u-\vartheta)_{2}, \, (u-\vartheta)_{1}, (u-\vartheta)_{2} \in \mathbb{R}^{n}. 
			\end{align*}
			Let $I_k=\frac{1}{|Q|} \int_{Q} f(w) \overline{e_{k}(w)} e^{\frac{i}{2}Im(z_{0} \cdot \bar{w})} \, dw$.
			Using the Taylor series expansion \eqref{eq:Taylor-series-expo} at $u=z_0$ and the cancellation condition \eqref{NC} upto degree $2\mathcal{N}_0 $ on $f$ , we write
			\begin{align*}
				I_{k} = & \frac{1}{|Q|} \int_{Q} f(w) \overline{e_{k}(w)} e^{\frac{i}{2}Im(z_{0} \cdot \bar{w})} \, dw \\
				= & \frac{1}{|Q|} \int_{Q} f(w) \overline{e_{k}(w)} e^{\frac{i}{2}Im(z_{0}\cdot \overline{w-z_{0}})} \, dw
				- \frac{1}{|Q|} \sum_{|\alpha|+|\beta|\leq \mathcal{N}_0 } \frac{1}{\alpha! \beta!} \left(\frac{i}{2}\right)^{|\alpha|+|\beta|} (-1)^{|\alpha|} \\
				&   \left(z_{0}-\vartheta \right)_{1}^{\alpha} \left(z_{0}-\vartheta \right)_{2}^{\beta} \int_{Q} f(w) \overline{e_{k}(w)}  e^{\frac{i}{2}Im(\vartheta\cdot \overline{w-z_{0}})} \left( w-z_{0}\right)_{2}^{\alpha} \left( w-z_{0}\right)_{1}^{\beta} dw\\
				= &  (\mathcal{N}_0 +1) \sum_{|\alpha|+|\beta|= \mathcal{N}_0 +1} \frac{1}{\alpha! \beta!} \left(\frac{i}{2}\right)^{|\alpha|+|\beta|} (-1)^{|\alpha|} \left(z_0-\vartheta \right)_{1}^{\alpha} \left(z_0-\vartheta \right)_{2}^{\beta}\times \\ \frac{1}{|Q|}&\int_{Q} \int_{0}^{1} (1-s)^{\mathcal{N}_0} e^{\frac{i}{2}Im((\vartheta + s (z_0-\vartheta))\cdot \overline{w-z_{0}})}
				\left( w-z_{0}\right)_{2}^{\alpha} \left( w-z_{0}\right)_{1}^{\beta}	f(w) \overline{e_{k}(w)} \, ds \, dw.
			\end{align*} 
			Note that in condition \eqref{NC} we need $|\alpha| + |\beta|\leq 2\mathcal{N}_0 $ as $\{e_k\}$'s are polynomials of degree at most $\mathcal{N}_0 $. 
			Since $|w-z_{0}|\lesssim_n r$ and $|\vartheta-z_{0}|\lesssim_n 2 \sigma$, we get
			\begin{align}\label{eq:estimate-Ij}
				\left|I_{k}\right| \lesssim_{n,p} \frac{1}{|Q|} \int_{Q} |f(w)| \sigma^{\mathcal{N}_0 +1} \, r^{\mathcal{N}_0 +1} \, dw \leq C_{n,p} \sigma^{\mathcal{N}_0 +1} \, r^{\mathcal{N}_0 +1-2n/p}.
			\end{align}
			
			Therefore, the estimates \eqref{eq:bound-b1} and \eqref{eq:estimate-Ij} together imply
			\begin{align*}
				|b^{(1)}(z)|\leq C_{n,p,J} \sigma^{\mathcal{N}_0+1} \, r^{\mathcal{N}_0+1-2n/p}.
			\end{align*}
			
			If we choose $q=\frac{2n}{\frac{2n}{p}-\mathcal{N}_0-1}$, then $\|b^{(1)}\|_{q} \leq C_{n,p,J} \sigma^{\mathcal{N}_0+1}$. Recall that $2n(1/p-1)-1<\mathcal{N}_0=\lfloor 2n(1/p-1) \rfloor$ which further implies that $q>1$.
			
			Since $\supp b^{(1)} \subset Q(z_{0}, \sigma)$, by Holder's inequality and boundedness of $\widetilde{\mathcal{M}}_{\sigma}$ on $L^q(\C^n), q>1$ we get
			\begin{align*}
				\|\omega(z_0,\cdot)b^{(1)}(\cdot)\|_{h^p_{\sigma}}
				= & \|\widetilde{\mathcal{M}}_{\sigma}(\omega(z_0,\cdot) b^{(1)}(\cdot))\|_{p}\\
				\leq & C_{n,p,J} \sigma^{\mathcal{N}_0+1} \|\omega(z_0,\cdot)b^{(1)}(\cdot)\|_{q}\\
				\leq & C_{n,p,J} \sigma^{2(\mathcal{N}_0+1)}. 
			\end{align*}
			
			Now, we choose $\sigma$ sufficiently small such that $C_{n,p,J} \sigma^{2(\mathcal{N}_0+1)} < \frac{1}{2}$. Note that the choice of $\sigma$ only depends on $n,$ $p$ and is independent of $f$.
			
			Since $\omega(z_0,\cdot)b^{(1)} \in {h^{p}_{\sigma}(\mathbb{C}^n)}$, we can again write
			\begin{align*}
				b^{(1)}(z)= \sum_{j} \nu^{(1)}_{j} h^{(1)}_{j}(z)
			\end{align*}
			where $\sum_{j}|\nu^{(1)}_{j}|^{p} \lesssim_{n,p} \frac{1}{2^{p}}$ and the functions $h^{(1)}_{j}(z)$ satisfy the conditions \eqref{atom condition} and \eqref{cancellation condition} with $N=2\mathcal{N}_0$. In the rest of the proof whenever we refer to \eqref{cancellation condition} , we assume that $N=2\mathcal{N}_0$.
			
			We can now again decompose the functions $h^{(1)}_{j}$
			whose support is contained in a cube $Q_j=Q(z_{j}, r_{j})$ with $r_{j} <\sigma$ as done in \eqref{eq:projection}. Thus we can write 
			\begin{align*}
				b^{(1)}(z)= a^{(2)}(z)+ b^{(2)}(z).
			\end{align*}
			It is easy to see that $$\|\widetilde{\mathcal{M}}_{\sigma}(\omega(z_0,\cdot) b^{(2)}(\cdot))\|_{p}^p\leq \sum _{j}|\nu^{(1)}_j|^p\|\widetilde{\mathcal{M}}_{\sigma}(\omega(z_0,\cdot) \Pi_{Q_j}h_j^{(1)}(\cdot))\|_{p}^p.$$
			Note that in the above sum only those $Q_j$ occur for which the corresponding $r_j<\sigma$.
			Using the similar analysis as we did for $b^{(1)}$, we can show that 
			\begin{align*}
				\|\widetilde{\mathcal{M}}_{\sigma}(\omega(z_0,\cdot) \Pi_{Q_j}h_j^{(1)}(\cdot))\|_{p}^p\leq 1/2^p.
			\end{align*}
			This implies that \begin{align*}
				\|\omega(z_0,\cdot)b^{(2)}(\cdot)\|_{h^p_{\sigma}}
				= & \|\widetilde{\mathcal{M}}_{\sigma}(\omega(z_0,\cdot) b^{(2)}(\cdot))\|_{p}\\
				\leq & 1/4.
			\end{align*}
			Using the fact that $\omega(z_0,\cdot)b^{(2)}\in h^p_{\sigma}(\mathbb{C}^n)$ we can write $b^{(2)}=\sum_{j}\nu_j^{(2)}h^{(2)}_{j}$,
			where $h^{(2)}_{j}$ above satisfy the conditions \eqref{atom condition} and \eqref{cancellation condition} and $ \sum_{j} |\nu^{(2)}_{j}|^{p} \lesssim \frac{1}{4^{p}}$.
			
			Note that $a^{(2)}(z)= \sum_{j} \eta_{j}^{(2)} g_{j}^{(2)}(z)$ with $g_{j}^{(2)}$ satisfy $(ii)$ and $(iii)$ and $\sum_{j} |\eta_{j}^{(2)}|^{p}\lesssim 1$.
			
			By this iterative process, we get a sequence $\{a^{(k)}\}$ and $\{b^{(k)}\}$ such that 
			\begin{align*}
				b^{(k)}= a^{(k+1)} + b^{(k+1)},
			\end{align*}
			where $b^{(k+1)}(z)= \sum_{j} \nu^{(k+1)}_{j} h^{(k+1)}_{j}(z)$, with $h^{(k+1)}_{j}$ satisfy the conditions \eqref{atom condition}, \eqref{cancellation condition} and $ \sum_{j} |\nu^{(k+1)}_{j}|^{p} \lesssim \frac{1}{2^{(k+1)p}}$, and also $a^{(k+1)}(z)= \sum_{j} \eta_{j}^{(k+1)} g_{j}^{(k+1)}(z)$, with $g_{j}^{(k+1)}$ satisfy $(ii)$, $(iii)$ and $\sum_{j} |\eta_{j}^{(k+1)}|^{p}\lesssim 2^{-(k-1)p}$. 
			
			Therefore, we can write $f(z)= \sum_{k} a^{(k)}(z)$ in $\mathcal{S}'(\R^{2n})$, where $a^{(k)}\in H_{\CL, at,\sigma}^p(\mathbb{C}^n)$  and $\sum_{k}a^{(k)}$ converges in $H^{p}_{\CL,*, \sigma}(\mathbb{C}^n)$ with $\|f\|_{H^p_{\CL,at,\sigma}}\lesssim \|f\|_{H^p_{\CL,*,\sigma}}$. This gives the required atomic decomposition of $f$. 
			
			This completes the proof of Lemma \ref{lem:center-atomic-decom}.
			
		\end{proof}
		
		\subsection{Proof of Theorem \ref{thm:charac-hardy-spaces}}

		Before proving Theorem \ref{thm:charac-hardy-spaces}, let us first state a useful lemma we will be using throughout this paper.
		\begin{lemma}\label{lem:remainder}
			Let $g\in C^{\infty}(\R^{2n})$	 and $f$ be $(p,1)$ atom with $\supp{f}\subset Q(z_j,r)$, $r<1$. For $N\in \mathbb{N}$, we define 
			\begin{align*}
				\Phi_N(g,z,w)=\sum_{1\leq i_1,i_2,\ldots i_{N+1}\leq 2n } \frac{(-1)^{N+1}}{N!}& \int_{s=0}^{1}(1-s)^{N}\times\\  T_{i_1,i_2,\ldots i_{N+1}}(w-z_j){g}(z-z_j+s(z_j-w))& 
				e^{\frac{i}{2}s Im(z-z_j\cdot \overline{w-z_j})}	 ds.	\end{align*}
			Then $$f\times g(z)=e^{-\frac{i}{2}\text{Im}(z_j\cdot\overline{z})}\int_{\R^{2n}}f(w)\Phi_N(g,z,w)e^{\frac{i}{2}\text{Im}(z_j\cdot \overline{w})} dw.$$
		\end{lemma}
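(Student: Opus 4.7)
The plan is to combine the twisted-translation Taylor expansion \eqref{eq:Taylor-formula-twisted} (recentered at the atom's center $z_{j}$), a factorisation of the convolution phase, and the vanishing-moment property of the $(p,1)$-atom $f$.

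First, I would apply \eqref{eq:Taylor-formula-twisted} with $f$ replaced by $g$ and with $(z,w)$ relabeled to $(z-z_{j},\,w-z_{j})$, obtaining
\begin{equation*}
g(z-w)\,e^{\frac{i}{2}Im((z-z_{j})\cdot\overline{w-z_{j}})} \;=\; P_{N}(z,w) + \Phi_{N}(g,z,w),
\end{equation*}
where $P_{N}$ is the Taylor polynomial part, i.e., a polynomial of degree at most $N$ in the real and imaginary components of $w-z_{j}$, whose coefficients are $\tilde{X}_{i_{1}}\cdots\tilde{X}_{i_{k}}g$ evaluated at $z-z_{j}$. Second, I would verify the elementary bilinear identity
\begin{equation*}
Im(z\cdot\bar w) \;=\; -\,Im(z_{j}\cdot\bar z) + Im(z_{j}\cdot\bar w) + Im\bigl((z-z_{j})\cdot\overline{w-z_{j}}\bigr),
\end{equation*}
using $Im(z\cdot\bar z_{j})=-Im(z_{j}\cdot\bar z)$ and $Im(z_{j}\cdot\bar z_{j})=0$. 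Multiplying the Taylor identity by $e^{\frac{i}{2}Im(z\cdot\bar w)}$, inserting this factorisation, and substituting into the definition $f\times g(z)=\int f(w)g(z-w)\,e^{\frac{i}{2}Im(z\cdot\bar w)}\,dw$ would yield
\begin{equation*}
f\times g(z) \;=\; e^{-\frac{i}{2}Im(z_{j}\cdot\bar z)}\int f(w)\bigl(P_{N}(z,w)+\Phi_{N}(g,z,w)\bigr)\,e^{\frac{i}{2}Im(z_{j}\cdot\bar w)}\,dw.
\end{equation*}

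The final step is to show that the $P_{N}$ contribution vanishes. Each monomial $u_{i_{1}}\cdots u_{i_{k}}$ ($k\leq N$) appearing in $P_{N}$ is a polynomial of degree $k$ in the real and imaginary components of $w-z_{j}$, which re-expands multilinearly as a polynomial of the same degree in the components of $w$, equivalently as a linear combination of monomials $w^{\alpha}\bar{w}^{\beta}$ with $|\alpha|+|\beta|\leq N$. Provided $N\leq\mathcal{N}_{0}$, the atomic cancellation condition (3) in Definition \ref{Def:atom} applied to the $(p,1)$-atom $f$ forces each integral $\int f(w)\,w^{\alpha}\bar w^{\beta}\,\omega(z_{j},w)\,dw$ to vanish, killing the $P_{N}$ term entirely and leaving precisely the claimed identity.

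The main (and essentially only) obstacle is the bookkeeping of the phase factorisation and of the re-expansion of $P_{N}$ from the $(w-z_{j})$-basis into the $\{w^{\alpha}\bar w^{\beta}\}$-basis so that the atomic moments can be invoked. Both reductions are routine manipulations, so conceptually this lemma amounts to the twisted Taylor remainder \eqref{eq:Taylor-formula-twisted} combined with moment cancellation — there is no deep analytic step involved.
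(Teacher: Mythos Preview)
Your proposal is correct and follows essentially the same route as the paper: apply the twisted Taylor expansion \eqref{eq:Taylor-formula-twisted} to $g$ about $z-z_{j}$ with shift $w-z_{j}$, factor the convolution phase via the bilinear identity you wrote, and kill the polynomial part $P_{N}$ using the atom's moment conditions. The paper's proof is more terse but identical in substance; your added remark that the argument requires $N\leq\mathcal{N}_{0}$ (so that the atomic cancellation covers all monomials appearing in $P_{N}$) is a worthwhile precision that the paper leaves implicit.
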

		\begin{proof}
			The proof follows from the Taylor's formula for twisted translation \eqref{eq:Taylor-formula-twisted} for $g$ and the fact that $f$ satisfies cancellation properties as in Definition \ref{Def:atom}.
			Using Taylor expansion \eqref{eq:Taylor-formula-twisted} for $g$ about $z-z_j$ and twisted translated by $w-z_j$ we get 
			\begin{align*}
				g(z-w)e^{\frac{i}{2}Im(z-z_j\cdot\overline{w-z_j})}=&= \sum_{k=1}^N\sum_{i_1,i_2,\ldots i_k=1}^{2n} \frac{(-1)^{k}}{k!}T_{i_1,i_2,\ldots i_k}(w-z_j)g(z-z_j) \\&+  \Phi_N(g,z,w),		\end{align*} 
			where \begin{align*}
				\Phi_N(g,z,w)=\sum_{1\leq i_1,i_2,\ldots i_{N+1}\leq 2n } \frac{(-1)^{N+1}}{N!}& \int_{s=0}^{1}(1-s)^{N}\times\\  T_{i_1,i_2,\ldots i_{N+1}}(w-z_j){g}(z-z_j+s(z_j-w))& 
				e^{\frac{i}{2}s Im(z-z_j\cdot \overline{w-z_j})}	 ds. \end{align*}	
			We know that $f$ is a $(p,1)$ atom and it satisfies the cancellation condition as in Definition \ref{Def:atom}. The Taylor expansion for $g(z-w)e^{\frac{i}{2}Im(z\cdot\overline{w})}$ as above and integrating it with $f$ proves the lemma.
			
		\end{proof}

		{\it Proof of Theorem \ref{thm:charac-hardy-spaces}}:		Let us denote the $\sigma$ in Lemma \ref{lem:center-atomic-decom} by $\sigma_0$. First of all note that $H^{p}_{\CL,*,\sigma_0}(\mathbb{C}^n) \subseteq H^{p}_{\CL,at, \sigma_0}(\mathbb{C}^n)$ as a consequence of Remark \ref{R1} combined with {Lemma \ref{lem:center-atomic-decom}}.	
		Let $f\in H^{p}_{\CL,*}(\mathbb{C}^n) $. Then, $f\in H^{p}_{\CL,*,\sigma}(\mathbb{C}^n)$ for any $\sigma<1$ by definition. In particular, for $\sigma_0$.  Again combining Remark \ref{R1} and Lemma \ref{lem:center-atomic-decom}, we can write $f=\sum_{j}\lambda_j a_j$ where $a_j's$ are ${(p, \sigma_0)}$-atoms and $\sum_{j}|\lambda_j|^p\lesssim_{n,p}\|f\|^p_{H^{p}_{\CL,*}}$. Let $a_j$ be a $(p,\sigma_0)$- atom supported in $Q(z_j,r_j)$ such that $\sigma_0\leq r_j<1$ and $\|a_j\|_{\infty}<(r_j)^{-2n/p}$. We will show that $a_j$ can be decomposed into a sum of two ${(p, 1)}$-atoms. Let $Q_j=Q(z_j, r_j)$ and $\Pi_{Q_j}$ be the projection operator defined in \eqref{eqn:projection}. 
		Write $a_j= \Pi_{Q_j}a_j +a_j-\Pi_{Q_j}a_j$.
		Using the definition of $\Pi_{Q_j}$ and  $L^{\infty}$ estimates on $a_j$, we get $$\|\Pi_{Q_j}a_j\|_{\infty}\lesssim_{J}\|a_j\|_{\infty}\lesssim_{J}r_{j}^{-2n/p}.$$ Since $r_j>\sigma_0$ and $\sigma_0>0$ is fixed which depends on $n,p$\textcolor{red}{,} we get $$\|\Pi_{Q_j}a_j\|_{\infty}\lesssim_{J}C(n,p).$$ Here $C(n,p)$ is independent of the choice of the atom $a_j$. Therefore $\Pi_{Q_j}a_j$ can be considered as an atom supported in $Q(z_j,1)$. 				
		By definition of $\Pi_{Q_j}$ it is clear that $a_j-\Pi_{Q_j}a_j$ satisfies the required cancellation condition and \begin{align*}\|a_j-\Pi_{Q_j}a_j\|_{\infty}\leq \|a_j\|_{\infty}+ \|\Pi_{Q_j}a_j\|_{\infty}\lesssim_{n,p}r_j^{-2n/p}.\end{align*}   We can treat $a_j-\Pi_{Q_j}a_j$ as supported in $Q(z_j,r_j)$ for a fixed $r_j<1$. 
		
		We will prove the other way {of} inclusion now. Let $f\in H^{p}_{\CL,at, \sigma}(\mathbb{C}^n)$.
		Then we can write $f=\sum_{j}c_{j}f_{j}$, where $f_{j}$'s are {$(p, \sigma)$}-atoms and $\sum_{j}|c_{j}|^{p}<\infty$. First, we shall show that for all $j$, 
		\begin{align} \label{eq:maximal-with-atom}
			\int \left(\mathcal{M}_{\sigma}f_{j}(z)\right)^{p}\, dz \leq C.	
		\end{align}
		
		Let us assume $\supp f \subset Q_{j}=Q(z_{j}, r_{j})$ and $\widetilde{Q}_{j}=4\sqrt{2n}Q_{j}$. We write
		\begin{align*}
			\int \left(\mathcal{M}_{\sigma}f_{j}(z)\right)^{p}\, dz =& \int_{\widetilde{Q}_{j}} \left(\mathcal{M}_{\sigma}f_{j}(z)\right)^{p}\, dz + \int_{\widetilde{Q}_{j}^c} \left(\mathcal{M}_{\sigma}f_{j}(z)\right)^{p}\, dz\\
			=& J_{1} +J_{2}.
		\end{align*}
		
		Since $|f_{j}|\leq r_{j}^{-2n/p}$, we have $\mathcal{M}_{\sigma}f_{j}(z) \lesssim r_{j}^{-2n/p}$. This implies
		\begin{align*}
			J_{1} \lesssim r_{j}^{-2n} |\widetilde{Q}_{j}| \leq C.
		\end{align*} 
		
		Now, we consider $J_{2}$. Let $z\notin \widetilde{Q}_{j}$. When $r_{j}\geq \sigma>t$, for $0<t<\sigma$, observe that $\supp (f_j \times\varphi_{t})$ is contained in $2\sqrt{2n}Q_j$. Therefore, $\mathcal{M}f(z)=0$, for $z \in \widetilde{Q}_j^c$ and the inequality \eqref{eq:maximal-with-atom} is obvious.	
		
		We now consider the case $r_{j}<\sigma$.
		
		Using the moment condition of the atom $f_{j}$ and the Lemma \ref{lem:remainder} for $\varphi_t$, we write
		\begin{align*}
			f_j \times\varphi_{t}(z)= \int_{\R^{2n}} \varphi_{t}(z-w) f_{j}(w) e^{\frac{i}{2}Im(z\cdot\bar{w})}\,dw\\
			=  e^{-\frac{i}{2}Im(z_j\cdot\overline{z})}\int_{\R^{2n}} f_{j}(w) \Phi_{\mathcal{N}_0}(\varphi_t,z,w)e^{\frac{i}{2}Im(z_j\cdot\overline{w})} \, dw  	
		\end{align*}

		When $0<t<\sigma$ and $\varphi\in C_c^{\infty}({\R^{2n}})$, one can check that $${\left|\tilde{X}^{\alpha} \tilde{Y}^{\beta} \varphi_{t}(z-z_j+s(z_j-w))\right|}\lesssim_{\sigma,\mathcal{N}_0}t^{-2n-\mathcal{N}_{0}-1}\|\varphi\|_{\mathcal{N}_{0}+1}$$
		for all $|\alpha|+|\beta|=\mathcal{N}_0+1$. Similar estimate is also true for other rearreangements of $\tilde{X}_j$ and $\tilde{Y}_j$.
		
		 Using the above estimate, we get
		\begin{align*}
			\left|\Phi_{\mathcal{N}_0}(\varphi_t,z,w)	 \right| \lesssim_{\sigma} \frac{r_{j}^{\mathcal{N}_{0}+1}}{t^{2n+\mathcal{N}_{0}+1}}. 
		\end{align*}

		Without loss of generality we can assume that $r_j<t$. Since $z\notin \widetilde{Q}_{j}$, $w\in Q_{j}$ and $z-w \in \supp \varphi_{t}$, we have $|z-z_{j}|\leq 2\sqrt{2n} t$. This implies 
		\begin{align*}
			\left|f_j\times \varphi_{t}(z)\right| \lesssim_{\sigma} r_{j}^{-2n/p} \frac{r_{j}^{2n+\mathcal{N}_{0}+1}}{|z-z_{j}|^{2n+\mathcal{N}_{0}+1}}.
		\end{align*}
		
		Therefore, for all $z\notin \widetilde{Q}_{j}$, we get
		\begin{align*}
			\mathcal{M}_{\sigma}f_{j}(z) \lesssim_{\sigma} r_{j}^{-2n/p} \frac{r_{j}^{2n+\mathcal{N}_{0}+1}}{|z-z_{j}|^{2n+\mathcal{N}_{0}+1}}.
		\end{align*}
		
		Since $(2n+\mathcal{N}_{0}+1)p>2n$, we get
		\begin{align*}
			J_{2}\lesssim_{\sigma} r_{j}^{-2n} \int_{\widetilde{Q}_{j}^{c}}  \left(\frac{r_{j}^{2n+\mathcal{N}_{0}+1}}{|z-z_{j}|^{2n+\mathcal{N}_{0}+1}}\right)^{p} \, dz \lesssim_{\sigma} r_{j}^{2n}r_{j}^{-2n} = C_{\sigma}.
		\end{align*}
		This proves the estimate \eqref{eq:maximal-with-atom}. 
		
		Now, we want to show that $f\in H^{p}_{\mathcal{L},*, \sigma}(\mathbb{C}^n)$. It follows from the following observation,
		\begin{align*}
			\int \left(\mathcal{M}_{\sigma}f(z)\right)^{p} \, dz \lesssim \sum_{j}|c_{j}|^{p} \int \left(\mathcal{M}_{\sigma}f_{j}(z)\right)^{p} \, dz \lesssim \sum_{j}|c_{j}|^{p} <\infty.
		\end{align*}
		
		This proves  $H^{p}_{\CL,at, \sigma}(\mathbb{C}^n)\subseteq H^{p}_{\mathcal{L},*, \sigma}(\mathbb{C}^n)$ for any $\sigma>0$, in particular for $\sigma=1$.

		\subsection{Atomic decomposition of Hardy space corresponding to the heat semigroup \texorpdfstring{$e^{-t^2\CL}$}{}}
		In this subsection, our main goal is to prove Theorem \ref{MainThmHardy}. 
		Let us recall that  $$H^{p}_{\mathcal{L}}
		(\mathbb{C}^n)= \{ f : M_{\CL}^{\text{heat}}f(z)= \sup_{0<t<\infty}\left| e^{-t^2\mathcal{L}}f(z)\right| \in L^p(\mathbb{C}^n)\}.$$	
		
		Note that for notational convenience we have used the heat operator $e^{-t^2\mathcal{L}}$ in the definition of $H^p_{\mathcal{L}}(\C^n)$.
		
		We will show that
		\begin{theorem}\label{thm:heatmaximal and twisted maximal hardy space}
			For any $0<p<\infty$,  there exist $c,c'>0$ independent of $f$ such that $$c\|f\|_{H^{p}_{\mathcal{L}}
			}\leq \|f\|_{H^{p}_{\CL,*}}\leq c'\|f\|_{H^{p}_{\mathcal{L}}}.$$ 	
		\end{theorem}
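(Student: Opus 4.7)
The plan is to establish both inclusions $H^p_{\mathcal{L},*}(\mathbb{C}^n)\subseteq H^p_{\mathcal{L}}(\mathbb{C}^n)$ and $H^p_{\mathcal{L}}(\mathbb{C}^n)\subseteq H^p_{\mathcal{L},*}(\mathbb{C}^n)$ separately, with comparable quasi-norms. For $p>1$ both sides coincide with $L^p(\mathbb{C}^n)$: the grand maximal $\mathcal{M}f$ is dominated pointwise by the Hardy--Littlewood maximal function of $f$, the heat maximal is dominated the same way via Gaussian bounds on the twisted heat kernel $k_t$, and in both cases $f=\lim_{t\to 0^+}e^{-t^2\mathcal{L}}f$ recovers the lower bound. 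So the real content is in $0<p\leq 1$.

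For the direction $\|M^{\mathrm{heat}}_{\mathcal{L}}f\|_p\lesssim \|f\|_{H^p_{\mathcal{L},*}}$, I invoke Theorem \ref{thm:charac-hardy-spaces} to decompose $f=\sum_j \lambda_j a_j$ into $(p,1)$-atoms with $\sum_j|\lambda_j|^p\lesssim \|f\|^p_{H^p_{\mathcal{L},*}}$, and reduce to the uniform bound $\|M^{\mathrm{heat}}_{\mathcal{L}}a\|_p^p\lesssim 1$ for any $(p,1)$-atom $a$ supported in $Q=Q(z_0,r)$, $r\leq 1$. Split the $L^p$-integral over a dilate $\widetilde Q$ of $Q$ and its complement. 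On $\widetilde Q$, H\"older's inequality together with the $L^2$-bound $\|\sup_t|e^{-t^2\mathcal{L}}a|\|_2\lesssim \|a\|_2$ coming from the spectral theorem suffices. On $\widetilde Q^c$ I separate $0<t<1$ from $t\geq 1$. For $t<1$, apply Lemma \ref{lem:remainder} with $k_{t^2}$ in place of $\varphi_t$, using the twisted cancellation of $a$ and Gaussian derivative estimates $|\tilde X^\alpha \tilde Y^\beta k_{t^2}(z)|\lesssim t^{-2n-|\alpha|-|\beta|}e^{-c|z|^2/t^2}$ to obtain $|a\times k_{t^2}(z)|\lesssim r^{-2n/p}(r/|z-z_0|)^{2n+\mathcal{N}_0+1}$, which is $L^p$-integrable off $\widetilde Q$ because $(2n+\mathcal{N}_0+1)p>2n$. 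For $t\geq 1$ I use the spectral gap of $\mathcal{L}$ (spectrum in $[n,\infty)$) to get the exponential decay $\|e^{-t^2\mathcal{L}}\|_{L^2\to L^2}\leq e^{-nt^2}$, combined with $L^2\to L^\infty$ estimates on $k_{t^2}$ and the compact support of $a$.

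For the reverse $\|f\|_{H^p_{\mathcal{L},*}}\lesssim \|M^{\mathrm{heat}}_{\mathcal{L}}f\|_p$, I would proceed via a non-tangential heat maximal $M^{\mathrm{nt}}_{\mathcal{L}}f(z)=\sup_{|w-z|<t,\,t>0}|e^{-t^2\mathcal{L}}f(w)|$, first showing $\|M^{\mathrm{nt}}_{\mathcal{L}}f\|_p\lesssim \|M^{\mathrm{heat}}_{\mathcal{L}}f\|_p$ by a Fefferman--Stein good-$\lambda$ argument that leverages the spatial Lipschitz bound on $k_t$, and then establishing the pointwise bound $\mathcal{M}f(z)\lesssim M^{\mathrm{nt}}_{\mathcal{L}}f(z)$ for a.e.\ $z$. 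The pointwise bound would follow from the Calder\'on reproducing formula
\begin{equation*}
\varphi_t\times f(z) \;=\; c_N\int_0^\infty \varphi_t\times (s^2\mathcal{L})^N e^{-s^2\mathcal{L}}f\,(z)\,\frac{ds}{s},
\end{equation*}
valid for $f\in L^2$ by spectral calculus and extended by duality to the relevant distributions. Using $\mathcal{L}(f\times g)=f\times \mathcal{L}g$ to commute $(s^2\mathcal{L})^N$ across the twisted convolution, the kernel $\varphi_t\times(s^2\mathcal{L})^N k_{s^2}(\cdot-w)$ can be shown to be concentrated on $|z-w|\lesssim s+t$ with $s$-integrable size once $N$ is chosen large enough in terms of $p$ and $n$, giving the required majorization.

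The principal obstacle is this last step: constructing and controlling a reproducing formula that respects the twisted, non-commutative convolution. The usual Euclidean Fourier shortcuts are unavailable, so one must argue through the Laguerre-function spectral decomposition of $\mathcal{L}$ together with derivative estimates on $k_s$ compatible with the Taylor formula \eqref{eq:Taylor-formula-twisted}. The interplay between the compact support and smoothness of $\varphi\in\mathcal{S}_N$, the rapid but non-compact decay of $k_s$, and the higher-order twisted cancellation of order $\mathcal{N}_0+1$ required when $p<\tfrac{2n}{2n+1}$ is the technical heart of the equivalence.
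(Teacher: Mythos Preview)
Your forward inclusion $H^p_{\mathcal{L},*}\subseteq H^p_{\mathcal{L}}$ is essentially the paper's Lemma~\ref{lem:hardy-implies-heat-maximal}: atomic decomposition, near/far split, H\"older plus $L^2$ on the near piece, and the twisted Taylor expansion (Lemma~\ref{lem:remainder}) with gradient heat-kernel bounds on the far piece. The paper splits the far estimate by the atom radius ($r\geq 1$ versus $r<1$) rather than by $t$, but both organizations work; note that $(p,1)$-atoms with $r\geq 1$ carry no cancellation, so your assumption ``$r\leq 1$'' is slightly too restrictive, though the missing case is the easier one.

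For the reverse inclusion your route diverges from the paper's, and there is one over-optimistic claim. A pointwise bound $\mathcal{M}f(z)\lesssim M^{\mathrm{nt}}_{\mathcal{L}}f(z)$ is not obtainable from your Calder\'on formula: the kernel $\varphi_t\times(s^2\mathcal{L})^N k_{s^2}$ has Gaussian, not compact, support, so the best you get pointwise is a bound by the tangential maximal function $M^{**}_{\mathcal{L},N}f(z)=\sup_{w,t}|e^{-t^2\mathcal{L}}f(z-w)|(1+|w|/t)^{-N}$. You would then still need the $L^p$ comparison $\|M^{**}_{\mathcal{L},N}f\|_p\lesssim\|M^*_{\mathcal{L}}f\|_p$ for $N>2n/p$ (the paper's Lemma~\ref{lem:comp-N-max-non-tang}) before invoking your Fefferman--Stein step. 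With that correction your scheme is viable.

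The paper sidesteps the obstacle you flag---building a reproducing formula compatible with twisted convolution---by lifting to the Heisenberg group. Writing $F(z,u)=f(z)e^{iu}$ and choosing $\psi(z,u)=\phi(z)\eta(u)$ with $\widehat{\eta}\equiv 1$ near the origin, one has $F\ast_{\mathbb{H}^n}\delta_t\psi(z,0)=f\times\phi_t(z)$ for $0<t<1$. On $\mathbb{H}^n$ the Folland--Stein machinery supplies a ready-made decomposition $\psi=\int_0^1 p_{s,\mathbb{H}^n}\ast_{\mathbb{H}^n}\Theta^s\,ds$ with quantitative control on $\Theta^s$; since $F\ast_{\mathbb{H}^n}p_{t,\mathbb{H}^n}=e^{-t^2\mathcal{L}}f\cdot e^{iu}$, this yields $\mathcal{M}f(z)\lesssim M^{**}_{\mathcal{L},N}f(z)$ directly, without any new spectral or Laguerre analysis. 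The non-tangential-to-radial step $\|M^*_{\mathcal{L}}f\|_p\lesssim\|M^{\mathrm{heat}}_{\mathcal{L}}f\|_p$ is then carried out exactly as you sketch, via the truncated maximal functions $M^{\epsilon,K}_{\mathcal{L}}$, $M^{\epsilon,K}_{\mathcal{L},\nabla}$ and a good-set argument. Your Calder\'on-formula approach is more in the spirit of modern abstract Hardy-space theory and would work once the pointwise claim is weakened to $M^{**}$; the paper's lift to $\mathbb{H}^n$ trades that analysis for an appeal to existing homogeneous-group results.
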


		As a consequence of Theorem \ref{thm:charac-hardy-spaces} and \ref{thm:heatmaximal and twisted maximal hardy space}, every element in $H^p_\mathcal{L}(\C^n)$ has an atomic decomposition with the atoms satisfying conditions in the Definition \ref{Def:atom}.
		Before proving the above theorem let us first prove an easy lemma.
		\begin{lemma}\label{lem:hardy-implies-heat-maximal}
			For $0<p<\infty$, $H^{p}_{\CL,*}(\mathbb{C}^n)\subseteq H^{p}_{\mathcal{L}}(\mathbb{C}^n)$.
		\end{lemma}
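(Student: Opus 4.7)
The plan is to reduce the inclusion to a uniform bound on a single atom and then estimate the heat maximal function of that atom by separating a small near-zone from a far-zone argument that uses the cancellation of the atom. Since Theorem~\ref{thm:charac-hardy-spaces} has just identified $H^{p}_{\CL,*}(\C^n)$ with the atomic Hardy space $H^{p}_{\CL,at}(\C^n)$, any $f\in H^{p}_{\CL,*}(\C^n)$ can be written as $f=\sum_j\lambda_j a_j$ in $\mathcal{S}'(\R^{2n})$ with $(p,1)$-atoms $a_j$ and $\sum_j|\lambda_j|^p\lesssim \|f\|_{H^{p}_{\CL,*}}^{p}$. By the $p$-subadditivity of $\|\cdot\|_p^p$ for $0<p\leq 1$, it suffices to prove a uniform estimate $\|M^{\mathrm{heat}}_{\CL} a\|_p^p\leq C$ for every $(p,1)$-atom $a$, with $C$ independent of $a$.

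Fix such an atom $a$ with $\supp a\subset Q_0=Q(z_0,r)$, $\|a\|_\infty\leq r^{-2n/p}$, and, when $r<1$, the cancellation up to order $\mathcal{N}_0$ from Definition~\ref{Def:atom}. I would write $e^{-t^2\CL}a=c_n\,a\times p_{t^2}$, where $p_s$ is the heat kernel of $\CL$, and use that on $\C^n$ one has the Gaussian bound $|p_{t^2}(z)|\lesssim t^{-2n}e^{-c|z|^2/t^2}$ for $t\leq 1$, while the spectral gap $\CL\geq n$ forces $|p_{t^2}(z)|\lesssim e^{-nt^2}e^{-|z|^2/4}$ for $t\geq 1$, with analogous bounds for the twisted derivatives $\tilde X^\alpha p_{t^2}$ (carrying an extra factor $t^{-|\alpha|}$). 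Splitting $\int (M^{\mathrm{heat}}_{\CL} a)^p\,dz$ into the integrals over the near zone $B_1=4\sqrt{2n}\,Q_0$ and its complement, I would treat $B_1$ with H\"older's inequality combined with the $L^2$ bound $\|M^{\mathrm{heat}}_{\CL} a\|_2\lesssim \|a\|_2\leq r^{\,n-2n/p}$, which itself follows from the pointwise domination of $M^{\mathrm{heat}}_{\CL}$ by a Gaussian smoothing of $|a|$ and hence by the Hardy--Littlewood maximal function; since $|B_1|\lesssim r^{2n}$ this gives $\int_{B_1}(M^{\mathrm{heat}}_{\CL} a)^p\lesssim r^{2n(1-p/2)}\cdot r^{p(n-2n/p)}=1$. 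On the far zone I split $\sup_t$ as $\sup_{0<t\leq 1}+\sup_{t>1}$. For $0<t\leq 1$, I apply the twisted-translation Taylor formula \eqref{eq:Taylor-formula-twisted} to $p_{t^2}$ exactly as in Lemma~\ref{lem:remainder} and combine the kernel derivative bound with the cancellation of $a$ to obtain $|e^{-t^2\CL}a(z)|\lesssim r^{2n+\mathcal{N}_0+1-2n/p}/|z-z_0|^{2n+\mathcal{N}_0+1}$, exactly as in the proof of Theorem~\ref{thm:charac-hardy-spaces}; this is integrable on $B_1^c$ because $p(2n+\mathcal{N}_0+1)>2n$. For $t>1$ the large-time heat kernel bound produces $|e^{-t^2\CL}a(z)|\lesssim e^{-nt^2}$ times a fixed, $L^p$-integrable Gaussian, so this part contributes a bounded constant.

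The main obstacle is precisely the large-$t$ regime, where the grand maximal function $\mathcal{M}$, which only controls scales $0<t<1$, cannot by itself dominate $\sup_{t>1}|a\times p_{t^2}|$. The spectral gap $\CL\geq n$, which has no analogue in the classical Euclidean setting, is what allows us to close this gap by producing pure exponential decay in $t$. Apart from this use of the spectral gap, the argument is essentially a reprise of the atomic estimates at the end of the proof of Theorem~\ref{thm:charac-hardy-spaces}, with the compactly supported mollifier $\varphi_t$ replaced by the smooth, rapidly decaying heat kernel $p_{t^2}$.
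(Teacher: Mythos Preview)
Your overall strategy coincides with the paper's: reduce to a uniform bound on a single $(p,1)$-atom, split into a near zone (handled by H\"older and the $L^2$-boundedness of the heat maximal function) and a far zone (handled by the twisted Taylor remainder from Lemma~\ref{lem:remainder}). The difference is that you split the supremum in $t$ at $t=1$, whereas the paper does \emph{not} split in $t$; and this is exactly where your argument has a gap.

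For a small atom ($r<1$) your $t>1$ bound reads $|e^{-t^2\CL}a(z)|\lesssim e^{-nt^2}\int e^{-|z-w|^2/4}|a(w)|\,dw$, and on the far zone this gives at best $\sup_{t>1}|e^{-t^2\CL}a(z)|\lesssim r^{2n-2n/p}e^{-c|z-z_0|^2}$. Integrating the $p$-th power produces a factor $r^{-2n(1-p)}$, which blows up as $r\to 0$; the spectral gap kills the $t$-dependence but does not manufacture the extra $r^{\mathcal{N}_0+1}$ gain you need. (There is also the minor omission that for $r\geq 1$ the atom has no cancellation, so your Taylor argument for $0<t\leq 1$ does not apply as written; this case is easy but must be treated separately.) The paper avoids both issues by exploiting the explicit kernel $p_{t^2}(z)=c_n(\sinh t^2)^{-n}e^{-\frac14(\coth t^2)|z|^2}$ together with the elementary inequality $\coth t^2\geq 1$ for all $t>0$: this yields a $t$-\emph{uniform} spatial Gaussian factor $e^{-\gamma|z-w|^2}$ in the derivative bound~\eqref{eq:heat-kernel-grad-esti}, so that the single cancellation/Taylor argument works for all $t$ simultaneously when $r<1$, and the single size argument works for all $t$ when $r\geq 1$. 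If you insist on splitting at $t=1$, you must use the cancellation of the atom also in the $t>1$ regime.
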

		
		\begin{proof}
			It suffices to show that for any $(p, 1)$-atom $f$, 
			\begin{align*}
				\|M_{\CL}^{\text{heat}}f\|_{p}\leq C
			\end{align*}
			where $C$ is a constant uniform in $f$.
			We know that $e^{-t^2\CL}f(z)=f\times p_{t^2}(z)$ where $p_{t^2}(z)=(4\pi)^{-n}(\sinh t^2)^{-n}e^{-\frac{1}{4}(\coth t^2)|z|^2}$ (see \cite[page 37]{Thangavelu-yellow-book}). 
			Let $f$ be a $(p, 1)$-atom such that $\supp f \subseteq Q(z_{0}, s)$. Since $B(z_0,\frac{s}{2})\subset Q(z_0,s)\subset B(z_0,\frac{s\sqrt{2n}}{2})$, without loss of generality, we can assume $\supp f \subseteq B(z_{0}, r)$ with $r=\frac{s\sqrt{2n}}{2}$. First, we write
			\begin{align*}
				&\int \left|M_{\CL}^{\text{heat}}f(z) \right|^{p} \, dz \\
				= & \int_{|z-z_{0}| \leq 2r} \left|M_{\CL}^{\text{heat}}f(z) \right|^{p}\, dz + \int_{|z-z_{0}| > 2r} \left|M_{\CL}^{\text{heat}}f(z) \right|^{p}\, dz \\
				= & \mathcal{I}_{1}+ \mathcal{I}_{2}.
			\end{align*}
			
			{We know that $M_{\CL}^{\text{heat}}$ is bounded on $L^2(\mathbb{C}^n)$.  Therefore, after applying Holder's inequality in $\mathcal{I}_1$ and using $L^2$ boundedness of $M_{\CL}^{\text{heat}}$, we get}
			\begin{align*}
				\mathcal{I}_{1}= & \int_{|z-z_{0}| \leq 2r} \left|M_{\CL}^{\text{heat}}f(z) \right|^{p}\, dz\\
				\lesssim_{n,p} & r^{2n(1-p/2)} \|f\|^{p}_{2}.
			\end{align*}
			The $L^{\infty}$ bound on $f$ gives $\mathcal{I}_{1}\lesssim_{n,p} 1$.

			For estimating $\mathcal{I}_{2}$, we write
			\begin{align*}
				\mathcal{I}_{2}= \sum_{k\geq 1}^{\infty} \int_{2^k r < |z-z_{0}|\leq 2^{k+1}r}  \left|M_{\CL}^{\text{heat}}f(z) \right|^{p}\, dz.
			\end{align*}
			
			For all $z, w$ such that $2^k r < |z-z_{0}| \leq 2^{k+1}r$ and $|w-z_{0}| <r$, we have that
			\begin{align*}
				|z-w|\geq r2^{k-1}.
			\end{align*}
			
			Let us first assume $r\geq 1$. Observe that using the fact that $\|f\|_{\infty}\leq r^{-2n/p}$, for $2^k r < |z-z_{0}| \leq 2^{k+1}r$, we have
			\begin{align*}
				\left| f\times p_{t^2}(z)\right|= & \left| \int p_{t^2}(z-w) f(w) e^{\frac{i}{2}Im(z\cdot \bar{w})} \, dw\right| \\
				\lesssim & r^{-2n/p}  \int \left|p_{t^2}(z-w)\right| \, dw\\
				\lesssim & r^{-2n/p} e^{-\frac{(2^k r)^2}{32}}  \int t^{-2n} e^{-\frac{|z-w|^2}{8t^2}} \, dw \\
				\lesssim & r^{-2n/p} e^{-\frac{(2^k r)^2}{32}}.
			\end{align*}
			In the above inequalities, we have used that $\coth \mu\geq 1$ for all $\mu\geq 0$.
			
			For $r\geq 1$, we get
			\begin{align*}
				\mathcal{I}_{2} \lesssim & \sum_{k\geq 1}  \int_{2^k r < |z-z_{0}|\leq 2^{k+1}r} r^{-2n}  e^{-\frac{(2^k r)^2p}{32}} \, dz \\
				\lesssim& \sum_{k\geq 1} (2^k r)^{2n} (2^k r)^{-\lambda}< \infty 
			\end{align*}
			if we choose $\lambda>2n$.
			
			Now, we estimate for $r< 1$. Using cancellation condition of the atom $f$, from Lemma \ref{lem:remainder} we write
			\begin{align*}
				f\times p_{t^2}(z)
				=e^{-\frac{i}{2}Im(z_0\cdot \bar{z})} \int f(w) \Phi_{\mathcal{N}_0}(p_{t^{2}},z,w)e^{\frac{1}{2}\text{Im}(z_0\cdot \overline{w})}dw.
			\end{align*}
			
			For simplicity let us estimate sum of terms of the type \begin{align*} \sum_{|\alpha|+|\beta|= \mathcal{N}_{0}+1} \frac{(-1)^{|\alpha|+|\beta|}}{(|\alpha|+|\beta|)!}(\text{Re}(w-z_{0}))^{\alpha} (\text{Im}(w-z_{0}))^{\beta} \\\times\int_{s=0}^{1}(1-s)^{\mathcal{N}_{0}}(\tilde{X})^{\alpha} (\tilde{Y})^{\beta}p_{t^2}(z-z_0 +sz_0- sw) e^{s\frac{i}{2}Im(z-z_{0}\cdot \bar{w-z_0})} ds.\end{align*}
			For other combinations of the terms as in Taylor's formula \eqref{eq:Taylor-formula-twisted} similar method works.
			
			The above expression can be dominated by
			\begin{align*}
				\sum_{|\alpha|+|\beta|= \mathcal{N}_{0}+1} \frac{1}{(|\alpha|+|\beta|)!}& \int_{s=0}^{1} \left|(\tilde{X})^{\alpha} (\tilde{Y})^{\beta}p_{t^2}(z-z_0 +sz_0- sw)\right| |(w-z_{0})^{\alpha}| |(w-z_{0})^{\beta}| \, ds\\
				\lesssim & r^{\mathcal{N}_{0}+1} \sup_{ |\alpha| + |\beta| = \mathcal{N}_0+1} \sup_{w\in B(z_{0},s),0<s<1} \left|(\tilde{X})^{\alpha} (\tilde{Y})^{\beta}p_{t^2}(z-w)\right|.
			\end{align*}
			
			One can easily check that
			\begin{align}\label{eq:heat-kernel-grad-esti}
				\left|(\tilde{X})^{\alpha} (\tilde{Y})^{\beta}p_{t^2}(z-w)\right| \leq C t^{-2n-\mathcal{N}_0-1} e^{-\gamma|z-w|^2 \coth t^2} 
			\end{align}
			for some constants $C$ and $0<\gamma<1/4$. Let $B=Q(z_{0},r)$.
			Since $|z-w|\geq 2^{k-1}r$ and using \eqref{eq:heat-kernel-grad-esti}, we get
			\begin{align*}
				|e^{-t^2\CL} f(z)| \lesssim & r^{\mathcal{N}_{0}+1} r^{-2n/p+2n} \int_{B} t^{-2n-\mathcal{N}_0-1} e^{-\gamma|z-w|^2 \coth t^2} dw\\
				\lesssim & r^{\mathcal{N}_{0}+1} r^{-2n/p+2n}t^{-2n} \int_{B}  |z-w|^{-\mathcal{N}_{0}-1} \left(\frac{|z-w|^2}{t^2}\right)^{\frac{\mathcal{N}_{0}+1}{2}}   e^{-\gamma\frac{|z-w|^2 \coth t^2}{2}} dw\\
				\lesssim & r^{\mathcal{N}_{0}+1} r^{-2n/p+2n} (2^k r)^{-(\mathcal{N}_{0}+1)} e^{-1/2 (2^k r)^2} t^{-2n}\int_{\mathbb{C}^n} \left(\frac{|z-w|^2}{t^2}\right)^{\frac{\mathcal{N}_{0}+1}{2}}   e^{-\gamma\frac{|z-w|^2}{2t^2}} dw\\
				\lesssim & r^{-2n/p+2n} 2^{-k(\mathcal{N}_{0}+1)} e^{-\gamma/2 (2^k r)^2}.
			\end{align*}
			
			Therefore, using the above estimate, we get 
			\begin{align*}
				\mathcal{I}_{2} \lesssim & \sum_{k\geq 1}^{\infty}  \int_{2^k r < |z-z_{0}|\leq 2^{k+1}r} r^{-2n+2np} 2^{-k(\mathcal{N}_{0}+1)p} e^{-\gamma\frac{p}{2} (2^k r)^2} \, dz\\
				\lesssim & \sum_{k\geq 1}^{\infty}   2^{2nk} r^{2np} 2^{-k(\mathcal{N}_{0}+1)p} (2^k r)^{-2np}\\
				\lesssim & \sum_{k\geq 1}^{\infty} 2^{-k(\mathcal{N}_{0}+1-2n(1/p-1))p} < \infty.
			\end{align*}
			the last inequality follows from the fact $\mathcal{N}_{0}+1>2n(1/p-1)$.
			
			This completes the proof of the Lemma \ref{lem:hardy-implies-heat-maximal}.
		\end{proof}
		
		\begin{theorem} \label{thm:heat-maximal-grand-maximal}
			For {$0<p<\infty$ and $0<\sigma\leq \infty$, $H^{p}_{\mathcal{L}}(\C^n)\subset H_{\CL,*,\sigma}^p(\C^n)$} and there exists $C>0$ such that $$\|f\|_{H_{\CL,*,\sigma}^p}\leq C \|f\|_{H^{p}_{\mathcal{L}}},$$ where $C$ depends only on $n,p$. In particular, for $\sigma=1$ as well.
		\end{theorem}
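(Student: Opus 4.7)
The goal is to show that the heat maximal function controls the twisted grand maximal function. The plan is to establish the pointwise domination
\[
\mathcal{M}_\sigma f(z) \leq C \bigl(M_{HL}\bigl((M_\mathcal{L}^{\text{heat}} f)^{r}\bigr)(z)\bigr)^{1/r}
\]
for some $r \in (0,p)$, where $M_{HL}$ is the Hardy--Littlewood maximal function on $\mathbb{R}^{2n}$. Since $M_{HL}$ is bounded on $L^{p/r}$ (as $p/r > 1$), taking $L^p$-norms will then yield $\|\mathcal{M}_\sigma f\|_p \lesssim \|M_\mathcal{L}^{\text{heat}} f\|_p$, which is the conclusion of the theorem.

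The starting point is a Calder\'on reproducing formula. Since $\sigma(\mathcal{L}) \subset [n, \infty)$ is bounded away from zero, a direct spectral calculation gives, for every positive integer $M$,
\[
f = \frac{2}{\Gamma(M)} \int_0^\infty (s^2 \mathcal{L})^M e^{-s^2 \mathcal{L}} f \, \frac{ds}{s}
\]
in $\mathcal{S}'(\mathbb{R}^{2n})$. Setting $K_s := s^{2M} \mathcal{L}^M p_{s^2/2}$, the factorization $(s^2\mathcal{L})^M e^{-s^2\mathcal{L}} = (s^2\mathcal{L})^M e^{-s^2\mathcal{L}/2} \cdot e^{-s^2\mathcal{L}/2}$ gives $(s^2\mathcal{L})^M e^{-s^2\mathcal{L}} f = (e^{-s^2\mathcal{L}/2} f) \times K_s$, and twist-convolving with $\varphi_t$ then yields, by associativity of twisted convolution,
\[
f \times \varphi_t(z) = \frac{2}{\Gamma(M)} \int_0^\infty (e^{-s^2\mathcal{L}/2} f) \times (K_s \times \varphi_t)(z) \, \frac{ds}{s}.
\]
Since $|g \times h|(z) \leq (|g| \ast |h|)(z)$ (with $\ast$ the Euclidean convolution), the problem reduces to controlling the ``double kernel'' $K_s \times \varphi_t$ uniformly in $\varphi \in \mathcal{S}_N$.

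The technical heart of the proof is the kernel estimate
\[
|K_s \times \varphi_t(z)| \lesssim \Bigl(\tfrac{\min(s,t)}{\max(s,t)}\Bigr)^{\epsilon} \max(s,t)^{-2n} \Bigl(1 + \tfrac{|z|}{\max(s,t)}\Bigr)^{-L},
\]
uniformly in $\varphi \in \mathcal{S}_N$, for some $\epsilon > 0$ and $L$ as large as needed (by choosing $M$ large). For $s > t$, one uses the smoothness of $K_s$ at scale $s$ together with integration by parts against $\varphi_t \in \mathcal{S}_N$ to gain factors of $(t/s)^{\epsilon}$. For $s \leq t$, one must exploit the cancellation built into $K_s$ by the factor $\mathcal{L}^M$: applying the twisted Taylor expansion \eqref{eq:Taylor-formula-twisted} to $\varphi_t$ centered at points in $\supp K_s$, combined with the derivative estimates on $p_{s^2/2}$ analogous to those used in Lemma \ref{lem:hardy-implies-heat-maximal}, yields factors of $(s/t)^{\epsilon}$. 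Plugging this kernel bound back into the reproducing formula and using $|e^{-s^2\mathcal{L}/2} f| \leq M_\mathcal{L}^{\text{heat}} f$ pointwise gives, upon choosing $r \in (0,p)$ small enough that $L > 2n/r$, the sought pointwise domination. The main obstacle is precisely this kernel estimate: the oscillatory phase $e^{\frac{i}{2}\operatorname{Im}(z \cdot \bar{w})}$ in twisted convolution obstructs a direct application of Euclidean convolution techniques, so the cancellation provided by $\mathcal{L}^M$ must be extracted through the twisted Taylor formula of Section~\ref{Preliminaries}, and the resulting bounds must be tracked carefully to ensure they depend only on $N$, $n$, $p$ and not on the particular $\varphi \in \mathcal{S}_N$.
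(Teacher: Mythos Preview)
Your approach via the Calder\'on reproducing formula is genuinely different from the paper's route, which lifts to the Heisenberg group, invokes the Folland--Stein representation $\psi=\int_0^1 p_{s,\mathbb{H}^n}\ast_{\mathbb{H}^n}\Theta^s\,ds$ to obtain the pointwise bound $\mathcal{M}f\le C\,M^{**}_{\mathcal{L},N}f$, and then runs the Fefferman--Stein argument (with the auxiliary functions $M^{\epsilon,K}_{\mathcal{L}}$ and $M^{\epsilon,K}_{\mathcal{L},\nabla}$) to pass from the non-tangential to the radial heat maximal function. Your strategy is closer in spirit to Littlewood--Paley/almost-orthogonality proofs of Hardy-space equivalences.

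However, there is a real gap in the final step. After the kernel bound you write: ``using $|e^{-s^2\mathcal{L}/2}f|\le M_\mathcal{L}^{\mathrm{heat}}f$ pointwise gives, upon choosing $r\in(0,p)$ small enough that $L>2n/r$, the sought pointwise domination.'' This does not follow. What your argument actually produces is
\[
|f\times\varphi_t(z)|\ \lesssim\ \int_0^\infty\Bigl(\tfrac{\min(s,t)}{\max(s,t)}\Bigr)^{\!\epsilon}\bigl(M_\mathcal{L}^{\mathrm{heat}}f\ast \Phi_{\max(s,t)}\bigr)(z)\,\tfrac{ds}{s}\ \lesssim\ M_{HL}\bigl(M_\mathcal{L}^{\mathrm{heat}}f\bigr)(z),
\]
where $\Phi_u(w)=u^{-2n}(1+|w|/u)^{-L}$. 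This is the correct inequality, but it only yields the theorem for $p>1$. The stronger pointwise bound $g\ast\Phi_u(z)\le C\bigl(M_{HL}(g^{r})(z)\bigr)^{1/r}$ that you need for $r<1$ is \emph{false} for general $g\ge 0$, regardless of how large $L$ is: take $g=N^{2n}\chi_{B(w_0,1/N)}$ with $|w_0|=1$; then $g\ast\Phi_1(0)\to 2^{-L}>0$ as $N\to\infty$, whereas $(M_{HL}(g^{r})(0))^{1/r}\sim N^{2n(1-1/r)}\to 0$. The point is that once you discard all structure of $e^{-s^2\mathcal{L}/2}f$ by majorising it by $M_\mathcal{L}^{\mathrm{heat}}f$, you cannot recover the sub-mean-value regularity needed to push the exponent below $1$.

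The missing ingredient is precisely a comparison of Peetre-type or non-tangential maximal functions with the radial one, i.e.\ an estimate of the form
\[
\sup_{w}\frac{|e^{-s^2\mathcal{L}/2}f(z-w)|}{(1+|w|/s)^{N}}\ \le\ C\bigl(M_{HL}((M_\mathcal{L}^{\mathrm{heat}}f)^{r})(z)\bigr)^{1/r},\qquad N>2n/r,
\]
which exploits the smoothness of $e^{-s^2\mathcal{L}/2}f$ at scale $s$. Establishing this is exactly the content of the paper's argument (the passage through $M^{*}_\mathcal{L}$, $M^{**}_{\mathcal{L},N}$, and inequality~\eqref{eq:nontang-to-heat-maximal}); without it your reproducing-formula approach does not close for $p\le 1$.
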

		
		
		To prove Theorem \ref{thm:heat-maximal-grand-maximal}, we will examine various maximal functions related to the heat operator $e^{-t^2\CL}$ and the relationships between them. The proof utilizes techniques developed by Fefferman-Stein in \cite{FS} and  Folland-Stein in \cite{Folland-Stein-book} to characterise the Hardy space for the case of sub-Laplacian on the Heisenberg group. The crucial thing is to be able to realise a distribution on $\C^n$ as a distribution on $\mathbb{H}^n$ via the lifting map $f\rightarrow f(z)e^{it}$. Utilizing this realisation and the Lie group structure of the Heisenberg group,  we will prove some essential estimates leading to Theorem \ref{thm:heat-maximal-grand-maximal}. Some of the proofs involved are standard and will be presented in the Appendix.

		Let us define non-tangential maximal function corresponding to $e^{-t^2\CL}$ as
		
		\begin{align*}
			& M^{*}_{\CL}f(z)= \sup_{|w|<t} \left| e^{-t^2\mathcal{L}}f(z-w)\right|.
		\end{align*}
		Note that for $w=0$ it reduces to ${M}^{heat}_{\mathcal{L}}$. so trivially we have that $${M}^{heat}_{\mathcal{L}}f(z)\leq M^{*}_{\CL}f(z).$$ In fact we will also show later the inequality in the $L^p$ norm in the reverse way in \eqref{eq:nontang-to-heat-maximal}. Before that we define another maximal function for our analysis
		\begin{align*}
			& M^{**}_{\CL,N} f(z)= \sup_{w\in \mathbb{C}^n,t>0} \left| e^{-t^2\mathcal{L}}f(z-w)\right| \left( 1+\frac{|w|}{t}\right)^{-N}
		\end{align*}
		
		for some $N>0$. We will make a choice of $N$ later.

		Observe that the following pointwise inequality follows just by using the definition of $M^{*}_{\CL}$ and $M^{**}_{\CL,N}$
		\begin{align*}
			M^{*}_{\CL}f(z) \leq 2^N M^{**}_{\CL,N} f(z).	
		\end{align*}
		
		We shall now state the reverse relation  {involving} $M^{*}_{\CL}$ and $M^{**}_{\CL,N}$ in $L^p$-norm. We will present the proof in the Appendix.
		\begin{lemma}\label{lem:comp-N-max-non-tang}
			If $M^{*}_{\CL}f(z) \in L^p(\mathbb{C}^n)$ and $N>2n/p$, then $M^{**}_{\CL,N} f(z) \in L^p(\mathbb{C}^n)$ with $\|M^{**}_{\CL,N} f\|_{p} \leq C_{N,p} \|M^{*}_{\CL} f\|_{p} $.
		\end{lemma}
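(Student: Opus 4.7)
The plan is to prove the standard Fefferman--Stein style pointwise domination of $M^{**}_{\CL,N}$ by the (Euclidean) Hardy--Littlewood maximal function applied to a suitable power of $M^{*}_{\CL}f$, and then invoke $L^{p/q}$ boundedness of the Hardy--Littlewood maximal function for an appropriate exponent $q<p$.

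First I would observe the following basic pointwise majorisation: if $y\in B(z-w,t)\subset \C^n\equiv \R^{2n}$, then setting $u=y-(z-w)$ one has $|u|<t$, so directly from the definition of $M^{*}_{\CL}$,
\begin{equation*}
\bigl|e^{-t^{2}\mathcal L}f(z-w)\bigr|\;\le\; M^{*}_{\CL}f(y).
\end{equation*}
Hence, for any $q>0$, averaging the $q$-th power over the ball $B(z-w,t)$ gives
\begin{equation*}
\bigl|e^{-t^{2}\mathcal L}f(z-w)\bigr|^{q}\;\le\;\frac{1}{|B(z-w,t)|}\int_{B(z-w,t)}\bigl(M^{*}_{\CL}f(y)\bigr)^{q}\,dy.
\end{equation*}

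Next, since $B(z-w,t)\subset B(z,|w|+t)$ and we are in $\R^{2n}$ with its standard Lebesgue measure, the ratio of their volumes is $((|w|+t)/t)^{2n}=(1+|w|/t)^{2n}$. Denoting by $M_{HL}$ the centred Hardy--Littlewood maximal operator on $\R^{2n}$, we obtain
\begin{equation*}
\bigl|e^{-t^{2}\mathcal L}f(z-w)\bigr|\;\le\;\Bigl(1+\tfrac{|w|}{t}\Bigr)^{2n/q}\,\bigl(M_{HL}((M^{*}_{\CL}f)^{q})(z)\bigr)^{1/q}.
\end{equation*}
Multiplying by $(1+|w|/t)^{-N}$ and taking the supremum over $w\in\C^n$ and $t>0$ yields
\begin{equation*}
M^{**}_{\CL,N}f(z)\;\le\;\sup_{w,t}\Bigl(1+\tfrac{|w|}{t}\Bigr)^{\frac{2n}{q}-N}\bigl(M_{HL}((M^{*}_{\CL}f)^{q})(z)\bigr)^{1/q}.
\end{equation*}

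The hypothesis $N>2n/p$ allows us to choose $q$ with $2n/N\le q<p$. With this choice the exponent $2n/q-N$ is non-positive, so the sup in $w,t$ is bounded by $1$, and moreover $p/q>1$. Therefore, raising to the $p$-th power and integrating,
\begin{equation*}
\|M^{**}_{\CL,N}f\|_{p}^{p}\;\le\;\int\bigl(M_{HL}((M^{*}_{\CL}f)^{q})(z)\bigr)^{p/q}\,dz\;\lesssim_{N,p}\;\int\bigl(M^{*}_{\CL}f(z)\bigr)^{p}\,dz,
\end{equation*}
by the $L^{p/q}$ boundedness of $M_{HL}$. This gives the claimed inequality. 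The argument is essentially routine; the only step that requires any care is the correct choice of $q$ so that simultaneously the geometric factor $(1+|w|/t)^{2n/q-N}$ is controlled and $p/q>1$, which is exactly what the hypothesis $N>2n/p$ affords.
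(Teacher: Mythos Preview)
Your proof is correct. The key pointwise bound $|e^{-t^{2}\mathcal L}f(z-w)|\le M^{*}_{\CL}f(y)$ for every $y\in B(z-w,t)$ is exactly right (write $z-w=y-u$ with $|u|<t$), and from there the argument is the standard Fefferman--Stein route via the Hardy--Littlewood maximal function of $(M^{*}_{\CL}f)^{q}$.

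It is worth noting that your approach differs from the one in the paper. The paper proceeds by a dyadic decomposition in $|w|/t$: for fixed $w,t$ it bounds $|e^{-t^{2}\mathcal L}f(z-w)|^{p}(1+|w|/t)^{-Np}$ by a sum $\sum_{k\ge 0}2^{(1-k)Np}\bigl(\sup_{|v|<2^{k}t}|e^{-t^{2}\mathcal L}f(z-v)|\bigr)^{p}$, and then invokes the level-set comparison (estimate (25) in Chapter II of Stein's book) to compare the $L^{p}$ norms of the maximal functions with apertures $2^{k}$ and $1$, picking up a factor $(1+2^{k})^{2n}$ that is beaten by $2^{-kNp}$ when $N>2n/p$. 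Your argument bypasses both the dyadic decomposition and the external reference: the averaging trick directly produces the pointwise inequality $M^{**}_{\CL,N}f\le \bigl(M_{HL}((M^{*}_{\CL}f)^{q})\bigr)^{1/q}$ once $q\ge 2n/N$, and the condition $N>2n/p$ is used only to ensure one can also take $q<p$. This is more self-contained and slightly shorter; the paper's approach, on the other hand, makes the role of the aperture comparison more transparent and connects explicitly with the classical treatment in \cite{Stein-book-1993}.
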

		
		\begin{proof}[Proof of Theorem \ref{thm:heat-maximal-grand-maximal}]
			We first prove the following inequality involving non-tangential maximal function $M^{*}_{\CL}$ and grand maximal function $\mathcal{M}$ (defined in Section \ref{section:Hardy space}).
			\begin{align}\label{eq:grand-maxi-nontnag}
				\|\mathcal{M}f\|_{p} \leq C \|M^{*}_{\CL}f\|_{p} .
			\end{align}
			
			Before proving \eqref{eq:grand-maxi-nontnag}, let us first derive a useful relation between twisted convolution and convolution on Heisenberg group when restricted to particular type of functions.
			Define $F(z,u)= f(z) e^{iu}$. Let us choose $\psi(z,u)= \phi(z) \eta(u)$, where $\phi \in S(\mathbb{C}^n)$ and $\eta \in S(\mathbb{R})$ such that $\widehat{\eta}\equiv 1$ on $B(0,1)$. The standard dilation on Heisenberg group is defined by $\delta_{t} F(z,u)= t^{-2n-2}F(z/t, u/t^2)= t^{-2n}\phi(z/t) t^{-2}\eta(u/t^2)$. 
			Note that 
			\begin{align}\label{eq:approx-iden-trans}
				F\ast_{\mathbb{H}^n}\delta_{t}\psi(z,u)& = \int_{\mathbb{H}^n}  \delta_{t}\psi(w,v) F(z-w, u-v-\frac{1}{2}Im(z\cdot \bar{w})) \, dw \, dv\\
				\nonumber& = \int_{\mathbb{H}^n} \phi_{t}(w) t^{-2}\eta(v/t^2) f(z-w) e^{i(u-v-\frac{1}{2}Im(z\cdot \bar{w}))} \, dw \, dv\\
				\nonumber & = \int_{\mathbb{C}^n} \phi_{t}(w) f(z-w) \widehat{\eta}(t^2) e^{-\frac{i}{2}Im(z\cdot \bar{w}))} e^{iu}\, dw\\
				\nonumber & = f\times\phi_{t} (z),
			\end{align}
			
			for every $0<t<1$.
			Therefore $F\ast_{\mathbb{H}^n}\delta_{t}\psi(z,u)=f\times\phi_{t} (z)e^{iu}$ for $0<t<1$.
			Let $p_{t,\mathbb{H}^n}$ be the kernel of heat semigroup $e^{-t^2\CL_{\mathbb{H}^n}}$ associated with the Heisenberg sub-laplacian $\mathcal{L}_{\mathbb{H}^n}$. 
			
			From Theorem 4.9, page 117 of \cite{Folland-Stein-book}, we know that for $\psi \in  S(\mathbb{H}^n)$, there exists $\Theta^{s}, 0<s\leq 1$ such that 
			\begin{align*}
				& \psi = \int_{0}^{1}(p_{s,{\mathbb{H}^n}})  \ast_{\mathbb{H}^n} \Theta^{s}\, ds\\
				\text{and} \, & \int_{\mathbb{H}^n} \left( 1 + |z|\right)^{N} \left| \Theta^{s}(z,u)\right| dz du \leq C_{\psi, N} s^{N}. 
			\end{align*}
			
			The kernel of the heat semigroup for the Heisenberg sub-laplacian $\mathcal{L}_{\mathbb{H}^n}$ is given by the following  expression
			\begin{align*}
				p_{t,{\mathbb{H}^n}}(z,u)= c_{n} \int_{\mathbb{R}} e^{i\lambda u} p_{t^2,\lambda}(z) \, d\lambda,
			\end{align*} 
			where $p_{t^2,\lambda}$ is the kernel associated to the heat semigroup $e^{-t^2\mathcal{L}(\lambda)}$ and $\mathcal{L}(\lambda)$ is the scaled twisted Laplacian. Note that the above integral expression converges absolutely in $\lambda$. Indeed $p_{t^2,\lambda}(z)=(4\pi)^{-n}\frac{|\lambda|^n}{(\sinh |\lambda|t^2)^{n}}e^{-\frac{|\lambda|}{4}(\coth |\lambda|t^2) |z|^2}$ for $\lambda\neq 0$. See Theorem 2.8.1 of \cite{Thangavelu-book-uncertainty} for more details. 
			
			Using the above expression of $p_{t,{\mathbb{H}^n}}$, when we take $F(z,u)= f(z) e^{iu}$, we have $F\ast_{\mathbb{H}^n}(p_{t,{\mathbb{H}^n}})(z,u)=e^{-t^2\mathcal{L}}f(z) e^{iu}$. Using the convolution and dilation structure of the Heisenberg group we can check that $\delta_{t}\psi=\int_{0}^{1}(p_{st,{\mathbb{H}^n}})  \ast_{\mathbb{H}^n} \delta_{t}(\Theta^{s}) \, ds$.		Using this expression we have
			\begin{align}\label{eq:heat-kernel-trans}
				&\left|F\ast_{\mathbb{H}^n}\delta_{t}\psi(z,u)\right|\leq \int_0^1\left| F\ast_{\mathbb{H}^n}(p_{st,{\mathbb{H}^n}})\ast_{\mathbb{H}^n}\delta_{t}(\Theta^{s}) (z,u) \right|ds\\
				\nonumber =& \int_0^1\left| \int_{\mathbb{H}^n} \delta_{t}(\Theta^{s})(z-w, u-v-\frac{1}{2}Im(w\cdot \bar{z})) e^{-s^2t^2\mathcal{L}}f(w) e^{iv} \, dw \, dv \right|ds\\
				\nonumber\leq & \int_0^1\int_{\mathbb{H}^n} \left|\delta_{t}(\Theta^{s})(z-w, u-v-\frac{1}{2}Im(w\cdot \bar{z}))\right| (1+|z-w|/st)^{N} M^{**}_{\CL, N}f(z) \, dw \, dv \, ds\\
				\nonumber \lesssim & \int_0^1 s^{-N} \int_{\mathbb{H}^n} \left|\delta_{t}(\Theta^{s})(z-w, u-v-\frac{1}{2}Im(w\cdot \bar{z}))\right| (1+|z-w|/t)^{N} M^{**}_{\CL, N}f(z) \, dw \, dv \, ds\\
				\nonumber \lesssim &\int_0^1 M^{**}_{\CL, N}f(z) s^{-N}  \int_{\mathbb{H}^n} \left|(\Theta^{s})(w,v)\right| (1+|w|)^{N}  \, dw \, dv \, ds\\
				\nonumber \lesssim &  M^{**}_{\CL, N}f(z).
			\end{align}
			The estimates \eqref{eq:approx-iden-trans} and \eqref{eq:heat-kernel-trans} together imply
			\begin{align*}
				\mathcal{M}f(z)\leq C M^{**}_{\CL, N}f(z)
			\end{align*}
			for all $z$.
			
			Hence the above estimate together with Lemma \ref{lem:comp-N-max-non-tang} imply 
			\begin{align*}
				\|\mathcal{M}f\|_{p}\leq C \|M^{**}_{\CL, N}f\|_{p} \leq C \|M^{*}_{\CL}f\|_{p}
			\end{align*}
			for $N>2n/p$.

			Next, we shall show
			\begin{align}\label{eq:nontang-to-heat-maximal}
				\|M^{*}_{\CL}f\|_{p}\leq C \|M_{\CL}^{\text{heat}}f\|_{p}
			\end{align}
			to complete the proof of Theorem \ref{thm:heat-maximal-grand-maximal}.
			In \cite{FS}, Fefferman and Stein use the convolution structure to dominate non-tangential maximal function by the standard one. Here we will exploit the twisted convolution structure corresponding to $\CL$.
			For large $K$ and $\epsilon$ with $0<\epsilon \leq 1$, we define two new maximal functions 
			\begin{align*}
				& M^{\epsilon, K}_{\CL}f(z)= \sup_{|z-w|<t<\epsilon^{-1}} \left| e^{-t^2 \mathcal{L}}f(w)\right|\frac{t^{K}}{(\epsilon+t+ \epsilon |w|)^{K}}\\
				& M^{\epsilon, K}_{{\mathcal{L}}, \nabla}f(z)=\sup_{1\leq j \leq 2n} \sup_{|z-w|<t<\epsilon^{-1}}\left| tX_{j}e^{-t^2\mathcal{L}}f(w)\right|\frac{t^{K}}{(\epsilon+t+ \epsilon |w|)^{K}}
			\end{align*}
			where  ${X_{n+j}=Y_{j}}$, $1\leq j \leq n$. 
			
			To prove \eqref{eq:nontang-to-heat-maximal}, we shall prove that for any $p>q>0$,
			\begin{align}\label{eq:point-nontang-to-heat-maximal}
				M^{\epsilon, K}_{\CL}f(z)\leq C \left(M \left(M_{\CL}^{\text{heat}}f\right)^{q}(z)\right)^{1/q}	
			\end{align}
			for $z\in \mathbb{C}^n$, where $M$ is the Hardy-Littlewood maximal function. The boundedness of Hardy-Littlewood maximal function and the estimate \eqref{eq:point-nontang-to-heat-maximal} implies
			\begin{align*}
				\int_{\mathbb{C}^n} \left| M^{\epsilon, K}_{\CL}f(z)\right|^{p} \, dz \leq C \int_{\mathbb{C}^n} \left| M_{\CL}^{\text{heat}}f(z)\right|^{p} \, dz
			\end{align*}
			and taking limit $\epsilon \rightarrow 0$, we get \eqref{eq:nontang-to-heat-maximal}.
			
			Now, for proving \eqref{eq:point-nontang-to-heat-maximal}, we shall first show that
			\begin{align}\label{eq:grad-heat-to-nontang}
				\|M^{\epsilon, K}_{{\mathcal{L}}, \nabla}f\|_{p} \leq C \| M^{\epsilon, K}_{\CL}f \|_{p}.
			\end{align}

			For $1\leq j \leq n$, 
			\begin{align*}
				\mathcal{X}_{j} p_{t,{\mathbb{H}^n}}(z,u)= c_{n} \int_{\mathbb{R}} e^{i\lambda u} X_{j}(\lambda)p_{t^2,\lambda}(z) \, d\lambda
			\end{align*}	
			where {$X_{j}(\lambda)$ , $1\leq j\leq n$ are as given in the {Section \ref{Preliminaries}}}. We also have the inversion 
			\begin{align*}
				{X_{j}(\lambda)}p_{t^2,\lambda}(z)= c_{n}' \int_{\mathbb{R}} {\mathcal{X}_{j}} p_{t,{\mathbb{H}^n}}(z,u) e^{-i\lambda u} \, du
			\end{align*}
			for $\lambda\neq 0$.
			In particular, for $\lambda=1$,
			\begin{align}\label{eq:Xgradient}
				{X_{j}} p_{t^2}(z)= c_{n}' \int_{\mathbb{R}} {\mathcal{X}_{j}} p_{t,{\mathbb{H}^n}}(z,u) e^{-i u} \, du.
			\end{align} 	
			
			Similarly, for $1\leq j \leq n$, we get
			\begin{align}\label{eq:Ygradient}
				{Y_{j}}p_{t^2}(z)= c_{n}' \int_{\mathbb{R}} {\mathcal{Y}_{j}} p_{t,{\mathbb{H}^n}}(z,u) e^{-i u} \, du.
			\end{align}
			
			In order to prove \eqref{eq:grad-heat-to-nontang}, let us define 
			$\psi(z,u)= \mathcal{X}_{j} p_{1,{\mathbb{H}^n}}(z,u)$ for a fixed $j$. Using the fact that $\mathcal{X}_j\delta_tf(z,u)= t^{-1}\delta_t\left(\mathcal{X}_jf\right)(z,u)$, we get that $\delta_t\psi\left(z,u\right)=t{\mathcal{X}_{j}} p_{t^2,{\mathbb{H}^n}}(z,u)$. Using the identities \eqref{eq:Xgradient} and \eqref{eq:Ygradient}, a similar kind of analysis used to prove \eqref{eq:grand-maxi-nontnag} gives the following inequality
			\begin{align*}
				\|M^{\epsilon, K}_{{\mathcal{L}}, \nabla}f\|_{p} \leq C \| M^{\epsilon, K}_{\CL}f \|_{p}.	
			\end{align*} 
			where $C$ is independent of $\epsilon$.
			
			Now, let us consider the set 
			\begin{align*}
				\Gamma_{\tau,\epsilon} = \{ z: M^{\epsilon, K}_{{\mathcal{L}}, \nabla}f(z)\leq \tau M^{\epsilon, K}_{\CL}f(z)\}
			\end{align*}
			where $\tau$ is a positive number which will be chosen later.
			
			We claim that
			\begin{align} \label{eq:nontang-to-local-esti}
				\int_{\mathbb{C}^n} M^{\epsilon, K}_{\CL}f(z)^p \, dz \leq 2 \int_{\Gamma_{\tau,\epsilon}} M^{\epsilon, K}_{\CL}f(z)^p \, dz.
			\end{align} 
			
			Using the estimate \eqref{eq:grad-heat-to-nontang}, we get
			\begin{align*}
				\int_{\mathbb{C}^n\setminus\Gamma_{\tau,\epsilon}} M^{\epsilon, K}_{\CL}f(z)^p \, dz \leq \tau^{-p} \int_{\mathbb{C}^n\setminus\Gamma_{\tau,\epsilon}} M^{\epsilon, K}_{{\mathcal{L}}, \nabla}f(z)^p \, dz \leq C^{p} \tau^{-p} \int_{\mathbb{C}^n} M^{\epsilon, K}_{\CL}f(z)^p \, dz.
			\end{align*}
			
			We can choose $\tau^p \geq 2 C^{p}$ to get \eqref{eq:nontang-to-local-esti}. Therefore, we only need to estimate \eqref{eq:point-nontang-to-heat-maximal} for $z\in \Gamma_{\tau,\epsilon}$.  
			
			Let $z_{0}\in \Gamma_{\tau,\epsilon}$. By definition of $M^{\epsilon, K}_{\CL}f$ , there exists a $w \in \mathbb{C}^n$ and $t>0$ such that $|z_{0}-w|<t<\epsilon^{-1}$ and \begin{equation}\label{G1}\left|e^{-t^2\mathcal{L}}f(w)\right|\frac{t^{K}}{(\epsilon+t+ \epsilon |w|)^{K}}\geq \frac{1}{2} M^{\epsilon, K}_{\CL}f(z_0).\end{equation} Since $z_{0}\in \Gamma_{\tau,\epsilon}$, for each $1\leq j \leq 2n$, we get
			\begin{align*}
				\left| t{X_{j}}e^{-t^2\mathcal{L}}f(z)\right| \frac{t^{K}}{(\epsilon+t+ \epsilon |z|)^{K}} \lesssim \left|e^{-t^2\mathcal{L}}f(w)\right| \frac{t^{K}}{(\epsilon+t+ \epsilon |w|)^{K}}	
			\end{align*}  for all $|z-z_{0}|<t<\epsilon^{-1}$ and $w$ as in \eqref{G1}. For $z,w\in B(z_0,t)$ and $t<\epsilon^{-1}$ we have $$\frac{(\epsilon+t+ \epsilon |z|)^{K}}{(\epsilon+t+ \epsilon |w|)^{K}}\lesssim 1$$ for all $t>0$ with the upper bound independent of $z_0$. This implies \begin{equation}\label{IMP}
				\sup_{1\leq j \leq 2n} t \left| {X_{j}}e^{-t^2\mathcal{L}}f(z)\right| \leq C \left| e^{-t^2\mathcal{L}}f(w)\right| \end{equation} for all $z$ such that $|z-z_{0}|<t<\epsilon^{-1}$ and $w$ in \eqref{G1}.

			In what follows, $w$ always denotes the point satisfying the conditions in \eqref{G1}.
			Let us consider the set $\Lambda= \{ v : |v-z_{0}| <t, |v-w|<\frac{t}{4nC}\}$. Therefore, for $v\in \Lambda$, after applying mean value theorem for twisted translation for $p_{t^2}$ {(see \eqref{eq:Taylor-formula-twisted2})} 
			we get
			\begin{align*}e^{\frac{i}{2}Im (v-w)\cdot \overline{(u-w)}}p_{t^2}(v-w-(u-w))=p_{t^2}(w-u)\\+\sum_{1\leq j\leq 2n}\int_{0}^1 U_{j}(v-w)p_{t^2}(w-u+\theta(v-w))e^{-\frac{i}{2}\theta Im(v-w)\cdot\overline{(w-u)}}d\theta.\end{align*}
			On integrating the above expression with $e^{\frac{i}{2}Im (w\cdot \overline{u})}f(u)$,  and applying the reverse triangle inequality,  we get $$\left| e^{-t^2\mathcal{L}}f(v)\right| \geq  \left| e^{-t^2\mathcal{L}}f(w)\right|-2n|w-v|\sup_{1\leq j\leq 2n,|y-z_0|<t}|{X_{j}}e^{-t^2\mathcal{L}}f(y)|.$$
			
			From \eqref{IMP} it follows that $$\left| e^{-t^2\mathcal{L}}f(v)\right| \geq 1/2\left| e^{-t^2\mathcal{L}}f(w)\right|$$ where $v\in \Lambda$.
			Using the fact that $w$ is as in the inequality \eqref{G1}, we get \begin{equation}\label{eqn:last}\left| e^{-t^2\mathcal{L}}f(v)\right|\geq \frac{1}{2} \left|e^{-t^2\mathcal{L}}f(w)\right|\frac{t^{K}}{(\epsilon+t+ \epsilon |w|)^{K}}\geq \frac{1}{4} M^{\epsilon, K}_{\CL}f(z_0).\end{equation}
			
			Clearly  
			\begin{align*}
				M(M_{\CL}^{heat} f)^{q}(z_{0}) \geq & \frac{1}{|B(z_{0}, t)|} \int_{B(z_{0}, t)} M_{\CL}^{heat}f(v)^{q} \, dv \\
				\geq & \frac{1}{|B(z_{0}, t)|} \int_{B(z_{0}, t)} \left|e^{-t^2\mathcal{L}}f(v)\right|^{q} \, dv \\
			\end{align*}
			for any $t>0$.
			Now from the inequality \eqref{eqn:last}, for $z_{0}\in \Gamma_{\tau,\epsilon}$ and $v\in\Lambda$ we get,	
			
			\begin{align*}
				\frac{1}{|B(z_{0}, t)|} \int_{B(z_{0}, t)} \left|e^{-t^2\mathcal{L}}f(v)\right|^{q} \, dv &
				\geq  C_{1}  M^{\epsilon, K}_{\CL}f(z_{0})^q \frac{|\Lambda|}{|B(z_{0}, t)|}\\
				\geq & C_{2}  M^{\epsilon, K}_{\CL}f(z_{0})^q .
			\end{align*}	
			This proves the estimate \eqref{eq:point-nontang-to-heat-maximal}.	
			In the last inequality above we have used that $\frac{|\Lambda|}{|B(z_{0}, t)|}\geq C'$, where $C'>0$ is independent of $t$ and $z_0$ due to the standard dilation structure of $\R^{2n}$ and translation invariance of the Lebesgue measure. 
			This completes the proof of Theorem \ref{thm:heat-maximal-grand-maximal}.
		\end{proof}
		Let $\mathbb{H}^n_{\text{red}}:=\mathbb{H}^n/\{(0,2\pi \mathbb{Z})\}$ 	be the reduced Heisenberg group. Since $(0,2\pi \mathbb{Z})$ is a subgroup of the center of $\mathbb{H}^n$, we can also define $\mathbb{H}^n_{\text{red}}$ by the left quotient of $(0,2\pi \mathbb{Z})$.	
		We can identify $\mathbb{H}^n_{\text{red}}$ with the  set $\C^{n}\times S^1$. $\mathbb{H}^n_{\text{red}}$ is a nilpotent Lie group with a compact centre $0\times S^1$. 
			Define $Tf(z)=f(z)e^{it}$.
			
			\begin{remark}\label{rem:reduced}  We define  $$f*p_{t,\mathbb{H}^n}(z,s)= \int_{\mathbb{H}^n_{\text{red}}}f(w,\theta)p_t((w,\theta)^{-1}(z,s))dw d\theta  $$ 
				for $f\in \mathcal{S}(\mathbb{H}^n_{\text{red}})$. $f*p_{t,\mathbb{H}^n}$ defines a Schwartz class function on $\mathbb{H}^n_{\text{red}}$. We define a Hardy space on $\mathbb{H}^n_{\text{red}}$ as follows$$H^p({\mathbb{H}^n_{\text{red}}}):=\{f\in \mathcal{S}'(\mathbb{H}^n_{\text{red}}):\sup_{t>0}|f*p_{t,\mathbb{H}^n}|\in L^p(\mathbb{H}^n_{\text{red}})\}.$$
				Using a similar proof as in Theorem \ref{thm:heat-maximal-grand-maximal}, we can also show that $f\in H^p_{\mathcal{L}}(\C^n)$ if and only if $Tf\in H^p({\mathbb{H}^n_{\text{red}}})$.
				
			\end{remark}
			
			We conclude this section by presenting the proof of Theorem~\ref{MainThmHardy}. The result follows from the various statements established earlier in this section. However, for completeness, we provide the proof below.
			
			\begin{proof}[Proof of Theorem~\ref{MainThmHardy}]
				The equivalence between $i)$ and $iv)$ follows from Theorem~\ref{thm:charac-hardy-spaces}. Theorem~\ref{thm:heatmaximal and twisted maximal hardy space} establishes the equivalence between $ii)$ and $iv)$. Finally, the equivalence between $ii)$ and $iii)$ can be deduced from Remark~\ref{rem:reduced}.
				
				This completes the proof of Theorem~\ref{MainThmHardy}.
			\end{proof}

			\section{Sharp bounds for oscillatory multipliers on \texorpdfstring{$H^{p}_{\mathcal{L}}(\C^n)$}{}}\label{wave}

			{In this section, we prove Theorem \ref{thm:Main-result} concerning the boundedness of $\mathcal{L}^{-\delta/2} e^{ \pm it\sqrt{\mathcal{L}}},t>0$ from $H^{p}_{\mathcal{L}}(\C^n)$ to $L^p(\C^n)$ for $0<p\leq 1$. Our proof is inspired by the classical approach of estimating Fourier integral operators used by A. Miyachi \cite{Miyachi-wave} for the Euclidean Laplacian. However, the techniques utilized in \cite{Miyachi-wave} are not directly applicable to the case of the twisted Laplacian. For the Euclidean Laplacian, the solution to the wave equation can be expressed in terms of an integral operator with an explicit kernel. Unfortunately, such an expression does not exist for the twisted Laplacian.}
			
			{To address this, we employ method of subordination, which was also crucial in the work of D. M\"uller and A. Seeger \cite{Segeer-Muller-wave-APDE-2015} to write the wave operator into an integral involving Schr\"odinger operators up to a remainder term. For Schr\"odinger operators, we have an explicit kernel expression for the twisted Laplacian, which will be instrumental in obtaining the required estimates.}
			
			{One of the key ingredients in our analysis is the atomic space characterisation of $H^{p}_{\mathcal{L}}(\C^n),0<p\leq 1$, which has already been proved in the previous section. In view of the atomic space characterisation of $H^{p}_{\mathcal{L}}(\C^n),0<p\leq 1$, it suffices to prove the boundedness of $\mathcal{L}^{-\delta/2} e^{ \pm it\sqrt{\mathcal{L}}}$ for atoms only.} 
			
			{
				For smaller atoms, we must divide the frequency parameter into two parts based on the size of the atoms. In our proof, we also use a Taylor series expansion for twisted translation which is more natural in this context.
				\subsection{Proof of Theorem \ref{thm:Main-result}}
				\begin{proof}
					Let us define $\delta(n,p)=(2n-1)\left(1/p-1/2\right)$. It is enough to prove Theorem \ref{thm:Main-result} for $\delta= \delta(n,p)$ and  the operator $\mathcal{L}^{-\delta(n,p)/2} e^{ it\sqrt{\mathcal{L}}}$ at $t=1$. The proof of boundedness for $\mathcal{L}^{-\delta(n,p)/2} e^{\pm it\sqrt{\mathcal{L}}}$ for $t\neq 1$ follows in a similar fashion with a suitable modification of sub-ordination formula. See Remark \ref{remark:subordination-formula} below. Let  $$m(\sqrt{\mathcal{L}}):=\mathcal{L}^{-\delta(n,p)/2} e^{ i\sqrt{\mathcal{L}}}.$$
					
					Let $f$ be $(p,1)$ atom which is supported on a cube $Q=Q(z_0,s)$ of side length $s$ and centre at $z_0$. We know that $B(z_0,\frac{s}{2})\subset Q(z_0,s)\subset B(z_0,\frac{s\sqrt{2n}}{2})$. From now on we will work with $B(z_0,r)$, where $r=\frac{s\sqrt{2n}}{2}$. It suffices to prove that the following integral quantity
					\begin{align}
						\int_{\mathbb{C}^n} \left|m(\sqrt{\mathcal{L}})f(z)\right|^p dz 
					\end{align}	
					is uniformly bounded, independent $z_0\in\C^n, r>0$, i.e. the bound does not depend on the choice of the atom.
					Let us write \begin{align*}
						\int_{\mathbb{C}^n} \left|m(\sqrt{\mathcal{L}})f(z)\right|^p dz&=\int_{B(z_0,6 r)} \left|m(\sqrt{\mathcal{L}})f(z)\right|^{p} \, dz + \int_{B(z_0,6  r)^c} \left|m(\sqrt{\mathcal{L}})f(z)\right|^{p} \, dz. 
					\end{align*}	
					Using Holder's inequality in the first integral and boundedness of the operator $m(\sqrt{\mathcal{L}})$ on $L^2(\C^n)$, we get
					\begin{align*}
						\int_{B(z_0,6 r)} \left|m(\sqrt{\mathcal{L}})f(z)\right|^{p} \, dz
						\lesssim & \, r^{2n(1-p/2)} \left(\int_{\mathbb{C}^{n}} \left|m(\sqrt{\mathcal{L}})f(z)\right|^{2} dz \right)^{p/2} \\
						\lesssim  \, r^{2n(1-p/2)} \left(\int_{\mathbb{C}^{n}} \left|f(z)\right|^{2} \, dz \right)^{p/2}	
						\lesssim & \, r^{2n(1-p/2)} r^{-2n(1-p/2)} \lesssim 1.
					\end{align*}
					In the third inequality above, we have used that $\|f\|_{L^2} \lesssim_{n,p}r^{-2n(1/p-1/2)}$.	
					
					We will now show that		
					\begin{align} \label{eq:outside-compact-set}
						\int_{B(z_0,6r)^c} \left|m(\sqrt{\mathcal{L}}) f(z)\right|^p \, dz 	
					\end{align}
					has a uniform bound independent of the choice of the atom.
					
					Let us consider a partition of unity on $[1/2,\infty)$
					\begin{align*}
						\sum_{j=0}^{\infty} \phi (2^{-j} \lambda ) =1 \quad \text{for} \, \lambda \geq 1/2,
					\end{align*}
					where $\phi$ is a smooth function supported on $[1/4, 4]$ and $\phi\equiv 1$ on $[1/2,2]$. With this partition of unity, let us denote $m_j(\lambda)= m(\lambda)\phi(2^{-j}\lambda)$.
					Now we decompose our operator $m(\sqrt{\mathcal{L}})$ as follows 
					\begin{align}\label{eq:operator-decomposition}
						m(\sqrt{\mathcal{L}})= \sum_{j\geq 0} m_{j}(\sqrt{\mathcal{L}})
					\end{align}
					where $m_{j}(\sqrt{\mathcal{L}})= m(\sqrt{\mathcal{L}}) \phi(2^{-j}\sqrt{\mathcal{L}})$.
					

					We will use the following subordination formula from \cite[Proposition 4.1] {Segeer-Muller-wave-APDE-2015} 
					
					\begin{lemma}\label{lem:subordination-formula}
						Let $\tau \geq 1$ and $\chi\in {C_{c}^{\infty}}$ and supported in $[1/2,2]$. Then there exist $C^{\infty}_{c}$ functions $a_{\tau}(s)$ and $\Psi_{\tau}(s)$ supported for $s\sim 1 $ such that 
						\begin{align}\label{eq:subordination-MS}
							\chi(\tau^{-1}\sqrt{x})e^{i\sqrt{x}}= \sqrt{\tau} \int e^{i\tau/4s} a_{\tau}(s) e^{isx/\tau} ds + \Psi_{\tau}(\tau^{-2}x).
						\end{align}
						Furthermore $a_{\tau}$ and $\Psi_{\tau}$ are supported in $[1/16,4]$ and are linearly dependent on $\chi$. Also the following holds:
						$$\sup_{s}|\partial_s^{N_1}\partial_{\tau}^{N_2}a_{\tau}(s)|\leq C(K)\sum_{l=0}^K\|\chi^{(l)}\|_{\infty}~\text{for}~~N_1+N_2<\frac{K-1}{2}$$
						and $$\sup_{s}|\partial_s^{N_1}\partial_{\tau}^{N_2}\Psi_{\tau}(s)|\leq C(K,N_2)\tau^{N_1+1-K}\sum_{l=0}^K\|\chi^{(l)}\|_{\infty}~\text{for}~~N_1\leq K-2.$$
					\end{lemma}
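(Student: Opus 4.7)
My plan is to prove the subordination formula by a \emph{reverse stationary phase} argument. The key observation is that the phase appearing in the integral on the right-hand side, $\Phi_\tau(s,x) = \tau/(4s) + sx/\tau$, has a unique critical point $s_*(x) = \tau/(2\sqrt{x})$ for $x>0$, at which $\Phi_\tau(s_*(x), x) = \sqrt{x}$ and $\partial_s^2 \Phi_\tau(s_*(x), x) = 4x^{3/2}/\tau^2$. When $\tau^{-1}\sqrt{x}$ lies in $\mathrm{supp}(\chi) \subset [1/2,2]$, one has $s_*(x) \in [1/4,1]$ and $|\partial_s^2\Phi_\tau(s_*(x),x)| \simeq \tau$. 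Classical stationary phase then predicts that $\sqrt{\tau}\int e^{i\Phi_\tau(s,x)} a(s)\,ds$ admits an asymptotic expansion in $\tau^{-1}$ with leading term a constant multiple of $a(s_*(x))\, e^{i\sqrt{x}}$; this is precisely the structure needed to recover $\chi(\tau^{-1}\sqrt{x})e^{i\sqrt{x}}$ by choosing $a_\tau$ appropriately.

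To promote the asymptotic identity into an exact one, I would apply the Morse lemma in the $s$ variable on a neighbourhood of $s_*(x)$: there exists a smooth change of variable $\sigma = \sigma(s,x)$, depending smoothly on $x$ while $\tau^{-1}\sqrt{x}$ ranges over a compact set, such that $\Phi_\tau(s,x) = \sqrt{x} + \sigma^2$. Inserting this into the integral reduces it to a Fresnel-type integral
\begin{equation*}
\sqrt{\tau}\, e^{i\sqrt{x}} \int e^{i\sigma^2}\, \tilde a_\tau(\sigma, x)\, d\sigma,
\end{equation*}
with $\tilde a_\tau$ compactly supported in $\sigma$ near $0$. One then constructs $a_\tau$ by matching the leading Taylor coefficient, $\tilde a_\tau(0,x) = c\,\chi(\tau^{-1}\sqrt{x})$, and iteratively cancelling the higher Fresnel moments via a Borel summation procedure in powers of $\tau^{-1}$. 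The residual error term, when re-expressed in the $x$ variable, depends on $x$ only through the combination $\tau^{-2}x$ (by the dilation structure of $\Phi_\tau$) and inherits compact support from that of $\chi$; this is the function $\Psi_\tau$.

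The uniform estimates on $a_\tau$ follow directly: $a_\tau(s)$ is essentially $\chi(1/(2s))$ up to lower-order corrections of size $\tau^{-1}$ built from derivatives of $\chi$ at the critical point, so each differentiation in $s$ or $\tau$ produces a bounded linear combination of $\chi^{(l)}$ for $l \leq K$. The sharp bound for $\Psi_\tau$, namely $\tau^{N_1+1-K}$, is obtained by integrating by parts $K-2$ times against the oscillatory factor $e^{i\Phi_\tau}$ in the region where $\Phi_\tau$ has no critical point (i.e.\ where $\chi$ vanishes but $\Psi_\tau$ might be non-zero from tail effects), each integration by parts gaining a factor of $\tau^{-1}$ and costing one derivative of $\chi$.

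The main obstacle will be the uniformity and sharpness of the error estimate for $\Psi_\tau$: one has to glue together the stationary regime (where the Morse normal form construction applies) and the non-stationary regime (where one integrates by parts), and verify that the matching near the boundary of $\mathrm{supp}(\chi)$ does not introduce additional loss. Controlling the $x$-dependence of the Morse change of variable so that all implicit constants are uniform in $\tau$ is the technical heart of the construction and is what forces the precise form of the derivative bounds in the statement.
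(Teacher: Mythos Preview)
The paper does not give its own proof of this lemma: it is quoted verbatim from M\"uller--Seeger \cite[Proposition~4.1]{Segeer-Muller-wave-APDE-2015} and used as a black box. So there is no ``paper's proof'' to compare against; your proposal is in effect a sketch of how to establish the cited result.

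That said, your strategy is the correct one and is essentially the argument in the reference. The identification of the critical point $s_*(x)=\tau/(2\sqrt{x})$, the phase value $\sqrt{x}$, and the Hessian $\simeq\tau$ are all right, and the ``reverse stationary phase'' idea---choosing $a_\tau$ so that the leading stationary-phase term reproduces $\chi(\tau^{-1}\sqrt{x})e^{i\sqrt{x}}$ and absorbing the remainder into $\Psi_\tau$---is exactly the construction. Two small clarifications: first, the iteration is finite (one corrects the stationary-phase expansion to order $K$ in $\tau^{-1}$ and declares the tail to be $\Psi_\tau$), so no genuine Borel summation is needed; second, the uniformity of the Morse change of variable in $\tau$ is automatic once one rescales $x\mapsto \tau^{-2}x$, since then the phase becomes $\tau(\frac{1}{4s}+s\cdot\tau^{-2}x)$ with the parenthetical factor depending only on the bounded parameter $\tau^{-2}x\in[1/4,4]$. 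This rescaling is also what makes the remainder a function of $\tau^{-2}x$ alone, as you noted.
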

					
					\begin{remark}\label{remark:subordination-formula}
						The above formula also holds for $\chi(\tau^{-1}\sqrt{x})e^{it\sqrt{x}}$ for $\tau t\geq 1$. The reader can refer to Proposition $2.2$ in \cite{Ricci-DAncona-wave-twisted-JFAA-2010}.	
					\end{remark}
					Putting $\tau=2^{j}$ and $\chi(x)=\phi(x)x^{-\delta(n,p)}$	 
					in the subordination formula above \eqref{eq:subordination-MS}, we get that 
					\begin{align}\label{eq:suborination-proof}
						2^{j\delta(n,p)}{m}_{j}(\sqrt{\mathcal{L}})=& 2^{j/2} \int e^{i\frac{2^j}{4s}} a_{2^j}(s) e^{is 2^{-j}\mathcal{L}} \, ds \, + \, \Psi_{2^j}(2^{-2j}\mathcal{L})\\
						\nonumber = & T_{j} + \Psi_{2^j}(2^{-2j}\mathcal{L}).
					\end{align}
					where 
					$$T_j= 2^{j/2} \int e^{i\frac{2^j}{4s}} a_{2^j}(s) e^{is 2^{-j}\mathcal{L}} \, ds .$$
					Therefore
					\begin{align*}
						m_j(\sqrt{\mathcal{L}})= 2^{-j\delta(n,p)} T_{j} + 2^{-j\delta(n,p)} \Psi_{2^j}(2^{-2j}\mathcal{L}).
					\end{align*}
					Note that $e^{is 2^{-j}\mathcal{L}}$ is the {Schr\"odinger} semigroup corresponding to the twisted Laplacian $\CL$. We know that for $f\in L^2(\C^n)$, $$e^{is 2^{-j}\mathcal{L}}f= f\times k_{s2^{-j}},$$ where the kernel $k_{s2^{-j}}(z)=\frac{(2\pi)^{-n}}{(\sin s2^{-j})^{n}} e^{-\frac{i}{4}\cot(s2^{-j})|z|^2}$. See (2.2.29) on page 37 of \cite{Thangavelu-yellow-book}.

					Using functional calculus corresponding to $\CL$, we have that $T_{j}f=f\times K_{j}$. 
					
					The above expression of $T_{j}$ implies that
					$$K_{j}(z) = (2\pi)^{-n}2^{j/2}  \int e^{i2^j/4s} e^{-\frac{i}{4}\cot(s2^{-j})|z|^2}\frac{a_{2^{j}}(s)}{(\sin s2^{-j})^{n}} \, ds.$$
					
					\begin{lemma}\label{lem:pointwise-subord-kernel-esti}
						There exist a large positive integer $N_0$, independent of $z$, such that for any $j\geq N_0$ and {$\alpha, \beta \in \mathbb{N}^n$}, we have 
						\begin{align}\label{eq:pointwise-subord-kernel-esti}
							\left|{{\tilde{X}}^{\alpha}{\tilde{Y}}^{\beta}K_{j}(z)}\right| \lesssim {2}^{j(2n+1)/2+j|\beta|+j|\alpha|} \left( 1+2^{j} |1-|z||\right)^{-N}
						\end{align}
						for all $N>0$ and for all $z\in \mathbb{C}^n$ such that $|1-|z||>0$.
					\end{lemma}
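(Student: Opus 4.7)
The plan is to differentiate under the $s$-integral defining $K_j$ and then estimate the resulting oscillatory integral by non-stationary phase. Since the twisted derivatives $\tilde X_l=\partial_{x_l}-\tfrac{i}{2}y_l$ and $\tilde Y_l=\partial_{y_l}+\tfrac{i}{2}x_l$ depend only on $z$, each application to the exponential $e^{-\frac{i}{4}\cot(s2^{-j})|z|^{2}}$ produces an extra linear factor of the form $-\tfrac{i}{2}(\cot(s2^{-j})x_l+y_l)$ or $-\tfrac{i}{2}(\cot(s2^{-j})y_l-x_l)$. Applying the full operator and commuting with the $s$-integral therefore yields
$$\tilde X^{\alpha}\tilde Y^{\beta}K_j(z)=c_n\, 2^{j/2}\int e^{i\phi_z(s)}\,P_{\alpha,\beta}\bigl(z,\cot(s2^{-j})\bigr)\,\frac{a_{2^{j}}(s)}{(\sin s2^{-j})^{n}}\,ds,$$
where $\phi_z(s):=\tfrac{2^j}{4s}-\tfrac{|z|^{2}}{4}\cot(s2^{-j})$ and $P_{\alpha,\beta}$ is a polynomial in $z$ and $\cot(s2^{-j})$ of total degree at most $|\alpha|+|\beta|$.

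Since $a_{2^j}$ is supported in $[1/16,4]$ by Lemma~\ref{lem:subordination-formula}, for $j\geq N_0$ with $N_0$ sufficiently large, $s2^{-j}$ lies in a tiny neighborhood of $0$, so $\sin(s2^{-j})=s2^{-j}(1+O(2^{-2j}))$ and $\cot(s2^{-j})=s^{-1}2^{j}(1+O(2^{-2j}))$. This gives the amplitude bound $|a_{2^j}(s)/(\sin s2^{-j})^{n}|\lesssim 2^{jn}$ (with analogous control on $s$-derivatives from Lemma~\ref{lem:subordination-formula}), the polynomial bound $|P_{\alpha,\beta}(z,\cot(s2^{-j}))|\lesssim (1+|z|2^{j})^{|\alpha|+|\beta|}$, and, via a Taylor expansion of $\csc^{2}$, the phase estimates
$$\partial_s\phi_z(s)=\frac{2^{j}}{4s^{2}}(|z|^{2}-1)+O\bigl(2^{-j}(1+|z|^{2})\bigr),\qquad |\partial_{s}^{k}\phi_z(s)|\lesssim 2^{j}(1+|z|^{2})\text{ for }k\geq 1.$$

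The estimate is then proved in two regimes. If $2^{j}|1-|z||\leq 1$, the trivial bound on the oscillatory integral gives $|\tilde X^{\alpha}\tilde Y^{\beta}K_j(z)|\lesssim 2^{j(2n+1)/2}(1+|z|2^{j})^{|\alpha|+|\beta|}$; since $|z|$ is within $O(2^{-j})$ of $1$, the factor $(1+|z|2^{j})^{|\alpha|+|\beta|}$ is absorbed as $2^{j(|\alpha|+|\beta|)}$ and the claim follows. If $2^{j}|1-|z||>1$, I would integrate by parts $M$ times in $s$ using $L^{*}g:=-\partial_{s}(g/(i\partial_{s}\phi_z))$. The lower bound $|\partial_{s}\phi_z|\gtrsim 2^{j}|1-|z|^{2}|$---valid once the Taylor remainder is subdominant, which $2^{j}|1-|z||>1$ guarantees after a split on $|z|\lesssim 1$ versus $|z|\gg 1$---combined with the upper bounds on $\partial_{s}^{k}\phi_z$ produces
$$|\tilde X^{\alpha}\tilde Y^{\beta}K_j(z)|\lesssim 2^{j(2n+1)/2}(1+|z|2^{j})^{|\alpha|+|\beta|}\bigl(1+2^{j}|1-|z|^{2}|\bigr)^{-M}.$$
Using $|1-|z|^{2}|=|1-|z||(1+|z|)\geq |1-|z||$ and $|z|\leq 1+|1-|z||$, the factor $(1+|z|2^{j})^{|\alpha|+|\beta|}$ is controlled by $2^{j(|\alpha|+|\beta|)}(1+2^{j}|1-|z||)^{|\alpha|+|\beta|}$, so choosing $M=N+|\alpha|+|\beta|$ yields the claimed bound for any $N>0$.

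The main obstacle is the uniform lower bound on $|\partial_{s}\phi_z(s)|$: the Taylor remainder $O(2^{-j}(1+|z|^{2}))$ could in principle swamp the main term $\tfrac{2^j}{4s^2}(|z|^2-1)$ when $|z|$ is large or when $|1-|z||$ is very small, forcing a careful case split between the regimes $|z|\lesssim 1$, $|z|\sim 1$ and $|z|\gg 1$ (in the last of which the enhanced decay $|1-|z|^{2}|\sim |z|^{2}$ must be traded against the correspondingly larger polynomial factor $(1+|z|2^{j})^{|\alpha|+|\beta|}$). The secondary bookkeeping obstacle is verifying that iterating $L^{*}$ on the amplitude $P_{\alpha,\beta}a_{2^{j}}/(\sin s2^{-j})^{n}$ produces only the asserted $2^{j(|\alpha|+|\beta|)}$ growth and no extra powers of $2^{j}$ beyond those inherent in the derivatives of $\cot$ and $\csc$ near $0$.
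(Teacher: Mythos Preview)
Your approach---differentiate under the integral and then apply non-stationary phase in $s$---is exactly the paper's. One refinement worth noting: your stated upper bound $|\partial_s^k\phi_z|\lesssim 2^j(1+|z|^2)$ for $k\geq 2$ is too crude to make the ratios $|\phi_z^{(k)}/\phi_z'|$ bounded near $|z|=1$, which is what the iterated integration by parts actually needs; with only that bound and $|\phi_z'|\gtrsim 2^j|1-|z|^2|$, each application of $L^*$ can cost a factor $\sim |1-|z||^{-1}$ and the claimed gain $(2^j|1-|z|^2|)^{-M}$ does not follow. The paper circumvents this, for bounded $|z|$, by writing $\cot(s2^{-j})=2^j/s+O(s2^{-j})$ and absorbing $e^{iO(s2^{-j})|z|^2}$ into the amplitude, so the phase becomes exactly $2^j(1-|z|^2)/(4s)$ and all successive $s$-derivatives are trivially comparable; for large $|z|$ it avoids Taylor expansion altogether and uses $\csc^2(s/\tau)\geq(\tau/s)^2$ directly on the unexpanded phase to get $\partial_s\phi_z\geq\tfrac{1}{8}\tau(|z|-1)$. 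Your own Taylor expansion for $k=1$ in fact already contains the fix for higher $k$: the same expansion yields $|\partial_s^k\phi_z|\lesssim 2^j|1-|z|^2|+O(2^{-j}(1+|z|^2))$, and in the regime $2^j|1-|z||>1$ this gives $|\phi_z^{(k)}/\phi_z'|\lesssim 1$, after which your argument goes through as written.
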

					Again, using the functional calculus corresponding to $\CL$, we can write $\Psi_{2^j}(2^{-2j}\mathcal{L})f = f\times K_{j,\Psi}$, for $f\in L^2(\mathbb{C}^n)$ and $j\geq 0$.
					\begin{lemma}\label{lem:weighted-sup-estimate}
						For any $N\geq 0$ and $\alpha, \beta\in \mathbb{N}^{n}$, we have 
						\begin{align}\label{eq:weighted-sup-estimate}
							\left|{{\tilde{X}}^{\alpha}{\tilde{Y}}^{\beta}K_{j,\Psi}(z)}\right| \lesssim 2^{j(2n+|\alpha|+|\beta|)} (1+2^j|z|)^{-N}
						\end{align}
						for all $z\in \mathbb{C}^n$, and $j\geq 0$.
					\end{lemma}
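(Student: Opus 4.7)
The plan is to realize $K_{j,\Psi}$ as a superposition of explicit Schr\"odinger kernels and then exploit the rapid decay of $\Psi_{2^j}$ (in the $j$-parameter) inherited from Lemma~\ref{lem:subordination-formula}. First I would use Fourier inversion and the explicit Schr\"odinger kernel $k_\tau(z)=(2\pi)^{-n}(\sin\tau)^{-n}e^{-\frac{i}{4}\cot(\tau)|z|^2}$ for $e^{i\tau\mathcal{L}}$ to write
\begin{align*}
K_{j,\Psi}(z)=\int_{\mathbb{R}}\widehat{\Psi_{2^j}}(s)\,k_{s2^{-2j}}(z)\,ds.
\end{align*}
The derivative bounds $\sup_{s}|\partial_s^{N_1}\Psi_{2^j}(s)|\lesssim_{K} 2^{j(N_1+1-K)}$ for $N_1\leq K-2$ given in Lemma~\ref{lem:subordination-formula}, combined with integration by parts, yield $|\widehat{\Psi_{2^j}}(s)|\leq C_{M,K}\,2^{-jK}(1+|s|)^{-M}$ for every $M,K\geq 0$.

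Next, I would compute the action of $\tilde X^{\alpha}\tilde Y^{\beta}$ on $k_\tau$. Since $\tilde X_j=\partial_{x_j}-\tfrac{i}{2}y_j$ and $\tilde Y_j=\partial_{y_j}+\tfrac{i}{2}x_j$ act on the Gaussian-like factor $e^{-\frac{i}{4}\cot(\tau)|z|^2}$ producing terms of the form $\cot(\tau)\cdot z$, an induction on $|\alpha|+|\beta|$ shows $\tilde X^{\alpha}\tilde Y^{\beta}k_\tau(z)=P_{\alpha\beta}(z,\cot\tau)\,k_\tau(z)$ with $|P_{\alpha\beta}(z,\cot\tau)|\lesssim (1+|\cot\tau|\,|z|)^{|\alpha|+|\beta|}$. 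Changing variables $u=s\,2^{-2j}$ gives
\begin{align*}
\tilde X^{\alpha}\tilde Y^{\beta}K_{j,\Psi}(z)=\frac{2^{2j}}{(2\pi)^{n}}\int \widehat{\Psi_{2^j}}(2^{2j}u)\,P_{\alpha\beta}(z,\cot u)\,\frac{e^{-\frac{i}{4}\cot(u)|z|^{2}}}{(\sin u)^{n}}\,du.
\end{align*}
The rapid decay $|\widehat{\Psi_{2^j}}(2^{2j}u)|\lesssim_{M,K} 2^{-jK}(1+2^{2j}|u|)^{-M}$ localises the integrand to $|u|\lesssim 2^{-2j}$, where $\sin u\sim u$ and $\cot u\sim 1/u$; a direct size estimate then produces the factor $2^{j(2n+|\alpha|+|\beta|)}$ (with a free $2^{-jK}$ to spare).

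The spatial decay $(1+2^{j}|z|)^{-N}$ will be extracted by integration by parts in $u$ against the oscillatory phase. Since $\partial_{u}[-\tfrac{1}{4}\cot(u)|z|^{2}]=|z|^{2}/(4\sin^{2}u)\sim|z|^{2}/u^{2}$ on the relevant range $|u|\sim 2^{-2j}$, each IBP gains a factor of $\sin^{2}(u)/|z|^{2}\sim u^{2}/|z|^{2}$, while the $u$-derivatives of the amplitude $\widehat{\Psi_{2^j}}(2^{2j}u)P_{\alpha\beta}(z,\cot u)(\sin u)^{-n}$ cost at most $1/u\sim 2^{2j}$; the net gain per step is of order $(2^{j}|z|)^{-2}$, and $\lceil N/2\rceil$ iterations deliver the claimed decay. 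When $|z|\lesssim 2^{-j}$ no IBP is needed, and the size estimate from the previous paragraph suffices.

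The main obstacle I foresee is the bookkeeping of the derivatives produced by repeated IBP acting on the composite amplitude, in particular the mixing between polynomial growth factors $P_{\alpha\beta}(z,\cot u)$, the singular factor $(\sin u)^{-n}$, and the parameter-dependent factor $\widehat{\Psi_{2^j}}(2^{2j}u)$; each of these introduces new powers of $1/u$ or $z$, and one must verify that the extra cost is controlled by the $2^{-jK}$ reserve. The singularities of $\cot u$ at nonzero multiples of $\pi$ are harmless thanks to the $(1+2^{2j}|u|)^{-M}$ localisation of $\widehat{\Psi_{2^j}}(2^{2j}u)$.
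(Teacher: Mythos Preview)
Your route is quite different from the paper's, and there is a real gap in it. The integral representation
\[
K_{j,\Psi}(z)=\int_{\mathbb{R}}\widehat{\Psi_{2^j}}(s)\,k_{s2^{-2j}}(z)\,ds
\]
does not converge absolutely: after your change of variable $u=s2^{-2j}$, the amplitude $(\sin u)^{-n}$ has non-integrable singularities at $u=k\pi$, $k\neq 0$, and the polynomial decay $(1+2^{2j}|u|)^{-M}$ of $\widehat{\Psi_{2^j}}(2^{2j}u)$ cannot cure a local non-integrability. The oscillatory factor $e^{-\frac{i}{4}\cot(u)|z|^2}$ does not help either (after the substitution $v=u-k\pi$ one is left with an integral of the type $\int v^{-n}e^{ic/v}\,dv$, which diverges for $n\geq 2$). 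So the very formula on which your size estimate and your IBP argument are built is ill-defined as written. A standard repair is to shift the contour to $\mathrm{Im}\,s>0$ (using that $\Psi_{2^j}$ is compactly supported so $\widehat{\Psi_{2^j}}$ is entire with good bounds on horizontal lines), replacing $k_{s2^{-2j}}$ by the damped Mehler kernel of $e^{(is-\epsilon)2^{-2j}\mathcal{L}}$; then your oscillatory-integral analysis goes through, but this step must be made explicit.

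By contrast, the paper avoids the Schr\"odinger propagator altogether and proceeds abstractly: it records the gradient heat-kernel bound $|\tilde X^{\alpha}\tilde Y^{\beta}p_t(z)|\lesssim t^{-n-|\alpha|-|\beta|}e^{-c|z|^2/t}$ and then invokes a general spectral-multiplier kernel estimate (Lemma~4.3 of \cite{Bui-Duong-2021}), which for any smooth $\eta$ supported in $[R/4,4R]$ gives $(1+R|z|)^{N}|\tilde X^{\alpha}\tilde Y^{\beta}K_{\eta(\sqrt{\mathcal L})}(z)|\lesssim R^{2n+|\alpha|+|\beta|}\|\eta(R\cdot)\|_{W^{\infty}_{N+\epsilon}}$. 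Plugging in $R=2^{j}$ and $\eta(\cdot)=\Psi_{2^j}((\cdot)^{2}/R^{2})$ and using the uniform derivative bounds on $\Psi_{2^j}$ from Lemma~\ref{lem:subordination-formula} yields \eqref{eq:weighted-sup-estimate} immediately. This black-box approach sidesteps the singularities entirely, at the cost of quoting an external lemma; your explicit approach, once the contour shift is inserted, would give a self-contained argument tailored to $\mathcal{L}$.
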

					Note that in Lemmas \ref{lem:pointwise-subord-kernel-esti} and \ref{lem:weighted-sup-estimate}, we present kernel estimates for vector fields of the form $\tilde{X}^{\alpha}\tilde{Y}^{\beta}$. However, as in the previous section, we require kernel estimates for all possible rearrangements of $\tilde{X}_j$ and $\tilde{Y}_j$. Since the proofs of \eqref{eq:pointwise-subord-kernel-esti} and \eqref{eq:weighted-sup-estimate} are analogous for other rearrangements of $\tilde{X}_j$ and $\tilde{Y}_j$, we write only for $\tilde{X}^{\alpha}\tilde{Y}^{\beta}$ to keep it simple.
					
					 We postpone the proof of the above two lemmas to the end of this section. Let us assume it for the time being and use it to prove Theorem \ref{thm:Main-result}. 
					
					We will divide our analysis into two parts: one for large frequencies and another for small frequencies. By large frequency, we refer to cases where $j \geq N_0 $, with $N_0$ being a large positive number from Lemma \eqref{lem:pointwise-subord-kernel-esti}. The remaining cases will be classified as small frequencies. 
					
					We will first prove our theorem for large frequencies. Additionally, we will separate our analysis into two cases: \( r \geq \frac{1}{3} \) and \( r < \frac{1}{3} \). 
					
					
					\vspace{0.5cm}
					
					\textbf{Case 1:} $r\geq \frac{1}{3}$. 
					{In this case, we do not require any cancellation condition on the atoms.}
					We first do our calculation for the operator $ T_{j}.$ Note that if $|z-z_0|>6r$ and $|w-z_0|\leq 2r$, we have $|z-w|-\frac{1}{6}|z-z_0|\geq 1$. Note that $\supp{f}\subset B(z_0,2r)$.
					We know that $T_jf=f\times K_j$.
					Now, using the estimate \eqref{eq:pointwise-subord-kernel-esti} for $\alpha=\beta=0$, we get 
					\begin{align} \label{eq:global-r-geat-1}
						&\int_{B(z_0,6r)^{c}} \left| T_{j}f(z)\right|^p dz \\
						\nonumber \leq & \int_{B(z_0,6r)^{c}} \int_{|w-z_0|\leq 2r}\left| \omega(z,w)K_{j}(z-w)f(w)\right|^p dw dz\\
						\nonumber \lesssim & \int_{B(z_0,6r)^{c}} \left(\int_{|w-z_0|\leq 2r} \frac{2^{j(2n+1)/2}2^{-jN_1}}{|1-|z-w||^{N_1}} \left|f(w)\right| dw \right)^p dz\\
						\nonumber \lesssim & 2^{j(2n+1)p/2}2^{-jN_1p} \int_{B(z_0,6r)^{c}} \frac{1}{|z-z_0|^{N_1p}} \|f\|_{L^1} dz\\
						\nonumber \lesssim & 2^{j(2n+1)p/2}2^{-jN_1p} r^{-2n(1-p)} r^{-N_1p+2n}.
					\end{align}
					
					Now summing over $j\geq N_0$ gives $$\sum_{j\geq N_0}\int_{B(z_0,6r)^{c}} \left| T_{j}f(z)\right|^p dz\lesssim 1$$
					if we choose $N_1>\max \{2n/p, n+1/2\}$ and $r\geq \frac{1}{3}$. 
					
					Let $\Psi_{2^j}(2^{-2j}\mathcal{L})f = f\times K_{j,\Psi}$. The boundedness of $\int_{B(z_0,6r)^{c}} \left| \Psi_{2^j}(2^{-2j}\mathcal{L})f(z)\right|^p dz$
					follows from the kernel estimates corresponding to the operator $\Psi_{2^j}(2^{-2j}\mathcal{L})$ in  Lemma \ref{lem:weighted-sup-estimate}.
					From the {Lemma} \ref{lem:weighted-sup-estimate} we get
					\begin{align}\label{eq:remainder-kernel-esti}
						\left|K_{j,\Psi}(z)\right| \lesssim 2^{jn}\left(1+2^j|z|\right)^{-N_2}
					\end{align}
					for all $N_2>0$. 
					
					Using the estimate \eqref{eq:remainder-kernel-esti}, we obtain
					\begin{align}\label{eq:global-r-geat-2}
						\int_{B(z_0,6r)^{c}} \left| \Psi_{2^j}(2^{-2j}\mathcal{L})f(z)\right|^p dz 	
						\leq & \int_{B(z_0,6r)^{c}}\left(\int_{{B(z_0,2r)}}\left| K_{j,\Psi}(z-w)f(w)\right| {dw} \right)^p dz\\
						\nonumber \lesssim & \int_{B(z_0,6r)^{c}} \left(\int_{{B(z_0,2r)}} \frac{2^{2jn}}{(1+2^j|z-w|)^{N_2}} \left|f(w)\right| {dw} \right)^p dz.
					\end{align}
					The above can be bounded by
					\begin{align*}
						\, 2^{2jnp} \int_{B(z_0,6r)^{c}} \frac{1}{(1+2^j|z-z_0|)^{N_2p}} \|f\|_{L^1}^{p} \, {dz}\\
						\lesssim  \, 2^{-2jn(1-p)} r^{-2n(1-p)} 2^{-j(N_2p-2n)} r^{-(N_2p-2n)}. 
					\end{align*}
					Again $$\sum_{j\geq N_0}\int_{B(z_0,6r)^{c}} \left| \Psi_{2^j}(2^{-2j}\mathcal{L})f(z)\right|^p dz\lesssim 1$$
					
					if we choose $N_2>2n/p$ and $r\geq \frac{1}{3}$.
					
					Using the estimates \eqref{eq:global-r-geat-1} and \eqref{eq:global-r-geat-2} and the fact $r\geq \frac{1}{3}$, we conclude that
					\begin{align*}
						& \int_{B(z_0,6r)^{c}} \left|\sum_{j\geq N_0} m_j(\sqrt{\mathcal{L}}) f(z)\right|^{p} dz 
						\leq  \sum_{j\geq N_0} \int_{B(z_0,6r)^{c}} \left|m_j(\sqrt{\mathcal{L}}) f(z)\right|^{p} dz \\
						\lesssim & \sum_{j\geq N_0} 2^{-j(2n-1)(1/p-1/2)p} \left(\int_{B(z_0,6r)^{c}} \left| T_{j}f(z)\right|^{p} dz + \int_{B(z_0,6r)^{c}}  \left|\Psi_{2^j}(2^{-2j}\mathcal{L}) f(z)\right|^{p} dz \right)
					\end{align*}
					$$\lesssim  1$$
					if we choose $N_1,N_2$ corresponding to the kernels of the operators $T_j$ and $\Psi_{2^j}$ as above.

					\vspace{0.5cm}
					\textbf{Case 2:} $r<\frac{1}{3}$. 
					
					{We write}		
					\begin{align*}
						\int_{B(z_0,6r)^{c}} \left|\sum_{j\geq N_0} m_j(\sqrt{\mathcal{L}}) f(z)\right|^p dz
						\lesssim & \sum_{j\geq N_0} \int_{B(z_0,6r)^{c}} \left|m_j(\sqrt{\mathcal{L}}) f(z)\right|^p dz \\
						=  \sum_{j\geq N_0: \, 2^{j}>1/r} \int_{B(z_0,6r)^{c}} \left|m_j(\sqrt{\mathcal{L}}) f(z)\right|^p dz +& \sum_{j\geq N_0: \,  2^{j}\leq 1/r} \int_{B(z_0,6r)^{c}} \left|m_j(\sqrt{\mathcal{L}}) f(z)\right|^p dz\\
						= I_1 + I_2.
					\end{align*}
					
					\subsection{When \texorpdfstring{$2^j>r^{-1}$}{}}
					
					\textbf{\underline{Estimation for $I_1$:}} 
					
					Using \eqref{eq:suborination-proof}, we first write
					\begin{align*}
						I_1 \leq & \sum_{j\geq N_0:\, 2^j > 1/r} 2^{-jp\delta(n,p)} \int_{{B(z_0,6r)^{c}}} \left| T_{j}f(z)\right|^p dz \\
						& + \sum_{j\geq N_0:\, 2^j > 1/r} 2^{-jp\delta(n,p)} \int_{{B(z_0,6r)^{c}}} \left| \Psi_{2^j}(2^{-2j}\mathcal{L})f(z)\right|^p dz\\
						=& I_{11} + I_{12}.
					\end{align*}

					First, we estimate $I_{11}$. 
					Define the following decomposition of {$\C^n$}: $$A_1=\{z:|z-z_0|\leq 1-3r,\}, A_2=\{z:1-3r<|z-z_0|\leq 1+3r\},$$
					$${A_3=\{z:1+3r<|z-z_0|\leq 2 \} ~ \text{and}~ A_4=\C^n\setminus\{A_1\cup A_2\cup A_3\}.}$$
					
					We further subdivide $I_{11}$ as follows
					
					\begin{align*}
						I_{11} =  \mathcal{I}_{1} + \mathcal{I}_{2}+\mathcal{I}_{3}+\mathcal{I}_{4},
					\end{align*}
					
					where 
					\begin{align*}
						\mathcal{I}_{i}= \sum_{j\geq N_0:\, 2^j > 1/r} 2^{-jp\delta(n,p)} \int_{A_i\cap  B(z_0,6r)^c} \left|T_{j} f(z)\right|^p dz
					\end{align*} 
					for $i=1,2,3,4.$
					
					Let us estimate $\mathcal{I}_{1}$ first. Using the estimate \eqref{eq:pointwise-subord-kernel-esti}, we write
					\begin{align*}
						\left|T_{j}f(z)\right|
						=  \left| \int {K_{j}(z-w)f(w) \omega(z,w) }\, dw  \right| \\
						\nonumber \lesssim  \int 2^{j(2n+1)/2} 2^{-jN}\left(|1-|z-w||\right)^{-N} |f(w)| \, dw.
					\end{align*}
					Notice that for $|z-z_0|\leq 1-3r$ and $|w-z_0|\leq 2r$, we have $1-|z-w|\geq \frac{1-|z-z_0|}{3}$. Therefore, we get that \begin{align}\label{pointwiseT_j}\left|T_{j}f(z)\right|\lesssim \, 2^{j(2n+1)/2} 2^{-jN}\left(|1-|z-z_0||\right)^{-N}  r^{-2n(1/p-1)}.\end{align}		
					
					In the last inequality, we used the fact $\supp f \subset B(z_0,r)$ and the estimate $\|f\|_{L^{\infty}} \leq r^{-2n/p}$.	
					Using the above pointwise estimate and integrating $\left|T_{j}f(z)\right|^p$ on the set $\{z:|z-z_0|\leq 1-3r\}$ gives 
					\begin{equation}\label{eq:pointwise-operator-subord-large-spectrum}
						\int_{|z-z_0|\leq 1-3r} \left| T_{j}f(z)\right|^{p} dz\lesssim
						2^{j(2n+1)p/2} 2^{-jNp}r^{-2n(1/p-1)p} r^{-Np+1}.\end{equation}
					
					Therefore, using the estimate \eqref{eq:pointwise-operator-subord-large-spectrum}, we get  
					\begin{align*}
						& \mathcal{I}_{1}\leq \sum_{j\geq N_0:\, 2^j > 1/r} 2^{-jp\delta(n,p)} \int_{|z-z_0|\leq 1-3r} \left| T_{j}f(z)\right|^{p} dz\\
						\lesssim & \, r^{-2n(1/p-1)p} r^{-Np+1} \sum_{j:\, 2^j > 1/r} 2^{-jp\delta(n,p)} 2^{j(2n+1)p/2} 2^{-jNp} .
					\end{align*}
					
					If we choose $N$ sufficiently large enough, then we have
					\begin{align*}
						& r^{-2n(1/p-1)p} r^{-Np+1} \sum_{j:\, 2^j > 1/r} 2^{-jp\delta(n,p)}  2^{j(2n+1)p/2} 2^{-jNp} \\
						\lesssim & 	r^{-2n(1/p-1)p} r^{-Np+1} r^{(2n-1)(1/p-1/2)p} r^{-(2n+1)p/2}r^{Np}= 1.
					\end{align*}
					
					Hence
					$$\mathcal{I}_{1} \lesssim 1.$$

					The estimate for the term $\mathcal{I}_{3}$ is similar to that of $\mathcal{I}_{1}$. We use the fact that $$|z-w|-1\geq \frac{1}{3}(|z-z_0|-1)$$ whenever $|z-z_0|\geq 1+3r$, $|w-z_0|\leq 2r$ and $r<\frac{1}{3}$. So we omit the details. 
					
					Now, we estimate  $\mathcal{I}_{4}$. To calculate $\mathcal{I}_{4}$, we use the pointwise estimate \eqref{pointwiseT_j} on $T_jf(z)$ and integrate on the set $\{z:|z-z_0|\geq 2\}$. Hence,
					\begin{align*}
						\mathcal{I}_{4} \lesssim_{n,p} &  \sum_{j\geq N_0:\, 2^j > 1/r} 2^{-jp\delta(n,p)} r^{-2n(1/p-1)p} 2^{j(2n+1)p/2} 2^{-jNp} \\
						\leq & \, r^{-2n(1/p-1)p} \sum_{j\geq N_0:\, 2^j > 1/r} 2^{-2jn(1/p-1)p}  \lesssim 1.
					\end{align*}
					{The last inequality is true if we choose $N>2n/p> 1/p.$}
					
					Finally, we estimate $\mathcal{I}_2$. We will use the $L^2$ boundedness of $T_j$ on $L^2(\C^n)$ uniformly in $j$. Note that $T_j=2^{j\delta(n,p)}{m_j}(\sqrt{\CL})-\Psi_{2^j}(2^{-2j\CL})$.
					The symbol  $$2^{j\delta(n,p)}{m_j}(\sqrt{\lambda})=\left(2^{-j}\sqrt{\lambda}\right)^{-\delta(n,p)}\phi(2^{-j}\sqrt{\lambda})e^{i\sqrt{\lambda}}$$ is uniformly bounded in $j$ and $\lambda$ as $1/2\leq 2^{-j}\sqrt{\lambda}\leq 2$ on the support of $\phi$. The symbol corresponding to $\Psi_{2^j}$ is also uniformly bounded in $\lambda$ and $j$. (See Lemma \ref{lem:subordination-formula}.) This implies that the $L^2$ boundedness of $T_j$ uniformly in $j$.		To estimate $\mathcal{I}_2$, we shall calculate the following 
					\begin{align}\label{eq:esti-near-singularity-I1-1}
						\sum_{j\geq N_0:\, 2^j > 1/r} 2^{-jp\delta(n,p)} \int_{1-3r \leq |z-z_0|\leq 1+3r} \left|T_jf(z)\right|^{p} dz.	
					\end{align}

					Using Holder's inequality, we have 
					\begin{align}\label{eq:esti-near-singularity-I1-2}
						& \int_{A_2} \left|T_jf(z)\right|^{p} dz\\
						\nonumber\lesssim & r^{1-p/2} \left( \int_{\mathbb{C}^n} \left|T_jf(z)\right|^{2} dz \right)^{p/2}\\
						\nonumber\lesssim & \, r^{1-p/2} \left( r^{2n(1-2/p)} \right)^{p/2},
					\end{align}
					where $A_2=\{z:1-3r<|z-z_0|\leq 1+3r\}$.
					Summing over $2^{j}>r^{-1}$, we get 
					\begin{align*}
						\sum_{j\geq N_0: 2^j > 1/r} 2^{-jp\delta(n,p)} \int_{A_2} \left|T_jf(z)\right|^{p} dz
						\lesssim \left(\sum_{j:\, 2^j > 1/r} 2^{-jp\delta(n,p)}\right) r^{1-p/2} r^{-2n(1-p/2)}\\
						\lesssim r^{1-p/2} r^{-2n(1-p/2)} r^{(2n-1)(1/p-1/2)p}& =1.
					\end{align*}
					
					This implies $$\mathcal{I}_2 \lesssim 1\, \quad \text{if} \,\,\, 0<p\leq 1.$$
					
					Now, we calculate $I_{12}$. Using the estimate \eqref{eq:weighted-sup-estimate}, we get
					\begin{align}\label{eq:remainder-kernel-esti-I1}
						\left|K_{j, \Psi}(z)\right| \lesssim_{N} 2^{2jn}\left(1+2^j|z|\right)^{-N}
					\end{align}
					for all $N>0$. 
					
					{Now, decomposing the range of the integration into the annulus and using Holder's inequality, we get
						\begin{align}\label{eq:subordi-reminder-estimate-I1}
							& \int_{{B(z_0, 6r)^c}} \left|\Psi_{2^j}(2^{-2j}\mathcal{L})f(z)\right|^{p} dz\\
							\nonumber= & \sum_{k\geq 0} \int_{62^kr<|z-z_0|\leq 62^{k+1}r} \left|\Psi_{2^j}(2^{-2j}\mathcal{L})f(z)\right|^{p} dz\\
							\nonumber\lesssim & \sum_{k\geq 0} \left(2^{k}r\right)^{2n(1-p)}\left(\int_{|z-z_0|> 62^{k}r} \left|\Psi_{2^j}(2^{-2j}\mathcal{L})f(z)\right| \, dz\right)^{p}.
						\end{align}

						Using the kernel estimate \eqref{eq:remainder-kernel-esti-I1}, we get
						\begin{align} \label{eq:subordi-reminder-esti-I1}
							& \int_{|z-z_0|> 6 2^{k}r} \left|\Psi_{2^j}(2^{-2j}\mathcal{L})f(z)\right| \, dz\\
							\nonumber \lesssim & \int_{|z-z_0|>62^{k}r} \int \frac{2^{2jn}}{(1+2^j|z-w|)^{N}} |f(w)| \, dz \, dw
						\end{align}
						for all integer $N>0$.
						
						Since support of $f$ is contained in $B(z_0,r)$, then $|w-z_0|\leq 2r$. This implies together with the fact $|z-z_0|\geq 6 2^kr$, we have
						{\begin{align*}
								|z-w| \geq |z-z_0|-|w-z_0| \geq 6\sqrt{2n}2^k r -2r > 2^kr. 
						\end{align*}}
						
						Therefore
						\begin{align}\label{eq:subordi-reminder-esti-I1-2}
							& \int_{|z-z_0|> 6 2^{k}r} \left|\Psi_{2^j}(2^{-2j}\mathcal{L})f(z)\right| \, dz\\
							\nonumber \lesssim & \int_{{B(z_0,r)}} |f(w)| \int_{|z|> 2^{k}r} \frac{2^{2jn}}{(1+2^j|z|)^{N}} \, dz \, dw\\
							\nonumber \lesssim & r^{-2n(1/p-1)} (2^j 2^k r)^{-N+2n},
						\end{align}
						where the last inequality is true if we choose $N>2n$.}
					
					Therefore, the estimates \eqref{eq:subordi-reminder-estimate-I1} and \eqref{eq:subordi-reminder-esti-I1-2} together imply
					\begin{align}\label{eq:subordi-reminder-esti-final-I1}
						& \sum_{j\geq N_0: 2^j > 1/r}\int_{|z-z_0|> 6r} \left|\Psi_{2^j}(2^{-2j}\mathcal{L})f(z)\right|^{p} dz\\
						\nonumber\lesssim & \sum_{k\geq 0} {2^{-k(Np-2n)} } \sum_{j: 2^j > 1/r} (2^j r)^{\left(-N+2n\right)p}<\infty.
					\end{align}
					The  sum is finite if we choose $N>2n/p\geq 2n$.
					Note that the estimate \eqref{eq:subordi-reminder-esti-final-I1} implies that
					\begin{align*}
						I_{12} \lesssim 1.
					\end{align*}

					\subsection{When \texorpdfstring{$2^j\leq r^{-1}$}{}}
					
					\vspace{0.2in} 
					
					\textbf{\underline{Estimation for $I_2$:} }
					
					Using \eqref{eq:suborination-proof} we write $I_2\leq  I_{21} + I_{22}$, where
					\begin{align*}
						&I_{21}  = \sum_{j\geq N_0:\, 2^j \leq 1/r} 2^{-jp\delta(n,p)} \int_{{B(z_0,6r)^c}} \left| T_{j}f(z)\right|^p dz \\
						& I_{22} = \sum_{j\geq N_0:\, 2^j \leq 1/r} 2^{-jp\delta(n,p)} \int_{{B(z_0,6r)^c}} \left| \Psi_{2^j}(2^{-2j}\mathcal{L})f(z)\right|^p dz.
					\end{align*}

					We first estimate $I_{21}$  and further subdivide as 
					\begin{align*}
						I_{21} =  \mathcal{J}_{1} + \mathcal{J}_{2}+\mathcal{J}_{3}+\mathcal{J}_{4},
					\end{align*}
					
					where 
					\begin{align*}
						\mathcal{J}_{i}= \sum_{j\geq N_0:\, 2^j \leq 1/r} 2^{-jp\delta(n,p)} \int_{A_i\cap B(z_0,6r)^{c}} \left|T_{j}f(z)\right|^p dz
					\end{align*} 
					for $i=1,2,3,4$.
					First, we estimate $\mathcal{J}_{1}$. Using the cancellation condition of the atom $f$ and the Taylor series expansion \eqref{eq:Taylor-formula-twisted} with respect to twisted translation for the kernel $K_{j}$ at $z-z_0$ and $w-z_0$ as in Lemma \ref{lem:remainder}, we get
				
				\begin{align*}
					\left|T_{j}f(z)\right|
					\lesssim &  \int_{\C^n}   |\Phi_{\mathcal{N}_0}(K_j,z,w)|  |f(w)|\, dw
				\end{align*}
				Recall that $\Phi_{\mathcal{N}_0}(K_j,z,w)$ can be dominated by sum of terms of the type $$\int_{s=0}^1\left|{\tilde{X}}^{\alpha}\tilde{Y}^{\beta}K_{j}(z-z_0+s(w-z_0))			(\text{Re}(w-z_0))^{\alpha} (\text{Im}({w}-{z}_{0}))^{\beta}\right|ds $$ for $|\alpha|+|\beta|=\mathcal{N}_0+1$ and other rearrangements.
				Using the fact that $\supp f\subset B(z_0,r)$ and the $L^{\infty}$ estimates on the atom $f$, we can dominate $T_jf$ as follows
				\begin{align*}
					\left|T_{j}f(z)\right|
					\lesssim_{\mathcal{N}_0} & \, r^{\mathcal{N}_0+1} r^{-2n(1/p-1)}  \sup_{{w \in B(z_0, r)}} \sup_{\alpha: |\alpha|  = \mathcal{N}_0+1} |{\tilde{X}_{i_1}}^{\alpha_1}{\tilde{X}_{i_2}}^{\alpha_2}\ldots\tilde{X}_{i_{2n}}^{\alpha_{2n}} K_{j}(z-w)|,		\end{align*}
				
				where supremum is over all rearrangements $(i_1,i_2,\ldots i_{2n})$ of $(1,2,\ldots,2n)$.
				
				
				Using the estimate \eqref{eq:pointwise-subord-kernel-esti}, we have
				\begin{align}\label{eq:pointwise-operator-subord}
					\left|T_{j}f(z)\right| \lesssim r^{\mathcal{N}_0+1} r^{-2n(1/p-1)} 2^{j(\mathcal{N}_0+1)} 2^{j(2n+1)/2} 2^{-jN}(|1-|z-z_0||)^{-N} .
				\end{align}
				Let $$I(r):=\int_{|z-z_0|\leq 1-3r} (1-|z-z_0|)^{-Np} \, dz.$$ Clearly, $I(r)\lesssim_{N,p}r^{-Np+1}$ for $0<r<1/3$.
				From \eqref{eq:pointwise-operator-subord-large-spectrum} we have
				\begin{align*}
					& \mathcal{J}_{1}= \sum_{j\geq N_0:\, 2^j \leq 1/r} 2^{-jp\delta(n,p)} \int_{|z-z_0|\leq 1-3r} \left| T_{j}f(z)\right|^{p} dz\\
					\lesssim & \, r^{(\mathcal{N}_0+1)p} r^{-2n(1/p-1)p}\sum_{j:\, 2^j \leq 1/r} 2^{-jp\delta(n,p)}2^{j(\mathcal{N}_0+1)p} 2^{j(2n+1)p/2} 2^{-jNp}~~ I(r).
				\end{align*}
				
				The last inequality is true if we choose $N>1/p$. $\mathcal{J}_1$ can be dominated by 
				\begin{align*}
					& r^{(\mathcal{N}_0+1)p-2n(1/p-1)p} r^{-Np+1} \sum_{j:\, 2^j \leq 1/r} 2^{-jp\delta(n,p)}  2^{j(\mathcal{N}_0+1)p} 2^{j(2n+1)p/2} 2^{-jNp}.
				\end{align*}
				
				The above is uniformly bounded in $0<r<1/3$ if we choose $$N<1/p + (\mathcal{N}_0+1 - 2n(1/p-1)).$$	 We leave the details to the reader. Hence, $\mathcal{J}_{1} \lesssim 1.$

				The estimate for the term $\mathcal{J}_{3}$ is similar to $\mathcal{J}_{1}$. So we omit the details. 
				
				Now we estimate  $\mathcal{J}_{4}$. We have 	
				\begin{align*}
					\mathcal{J}_{4}\lesssim \sum_{j\geq N_0:\, 2^j \leq 1/r}2^{-jp\delta(n,p)}\int_{A_4} |T_jf(z)|^p dz,
				\end{align*}
				where $A_4=\{z\in \C^n: |z-z_0|\geq 2\}$. Using pointwise estimates on $T_j$ from \eqref{eq:pointwise-operator-subord} and the fact that $\int_{A_4}(|1-|z-z_0||)^{-Np}\, dz$ is bounded, the above can be dominated by 
				\begin{align*}
					\mathcal{J}_{4}\lesssim  r^{(\mathcal{N}_0+1)p} r^{-2n(1/p-1)p}\sum_{j:\, 2^j \leq 1/r} 2^{-jp\delta(n,p)}  2^{j(\mathcal{N}_0+1)p} 2^{j(2n+1)p/2} 2^{-jNp} \\
					=~ r^{(\mathcal{N}_0+1)p} r^{-2n(1/p-1)p} \sum_{j:\, 2^j \leq 1/r} 2^{-2jn(1/p-1)p} 2^{j(\mathcal{N}_0+1)p} 2^{-j(N-1/p)p}\\
					\lesssim 1.
				\end{align*}
				{The above inequality is true if we choose $N> 1/p.$}
				
				
				{Next, we estimate $\mathcal{J}_2$. It is clear from \eqref{eq:operator-decomposition} and \eqref{eq:suborination-proof} that $T_j$ is a spectral multiplier operator whose multiplier function is supported on $[2^{j-2}, 2^{j+2}]$. Let $\chi_{1}$ be a smooth and compactly supported function on $[0, \infty)$ and $\chi_1= 1$ on $[1/4, 4]$. Then, we can write
					\begin{align*}
						T_j= \chi_1(2^{-2j}\mathcal{L}) T_j=T_j \chi_1(2^{-2j}\mathcal{L}).
					\end{align*}
				}
				
				Therefore, to estimate $\mathcal{J}_2$, we calculate the following 
				\begin{align}\label{eq:esti-near-singularity-1}
					\sum_{j\geq N_0:\, 2^j \leq 1/r} 2^{-jp\lambda(n,p)} \int_{A_2} \left|\chi_{1}(2^{-2j}\mathcal{L})T_jf(z)\right|^{p} dz,	
				\end{align}
				where $A_2=\{z\in \C^n:1-3r \leq |z-z_0|\leq 1+3r\}$.
				Writing $F_j(z)= \chi_{1}(2^{-2j}\CL)f(z)$ and using \eqref{eq:esti-near-singularity-I1-2}, we have 
				\begin{align}\label{eq:esti-near-singularity-2}
					\int_{A_2} \left|\chi_{1}(2^{-2j}\mathcal{L})T_jf(z)\right|^{p} dz \lesssim  r^{1-p/2} \left( \int_{\mathbb{C}^n} \left|F_j(z)\right|^{2} dz \right)^{p/2}.
				\end{align}
				
				We now need to estimate $\|\chi_{1}(2^{-2j}\mathcal{L})f\|_{L^2}$. We shall estimate this using the cancellation condition of the atom $f$. Let $G_j$ be the kernel of the operator $\chi_{1}(2^{-2j}\mathcal{L})$ such that $\chi_{1}(2^{-2j}\mathcal{L})f=f\times G_j$. By applying Lemma \ref{lem:remainder}, we get the following estimate
				
				\begin{align*}
					& \left|\chi_{1}(2^{-2j}\mathcal{L})f(z)\right|
					\lesssim \int_{\C^n}|\Phi_{\mathcal{N}_0}(G_j,z,w)||f(w)|dw.	
				\end{align*}

				Applying Mikowski's integral inequality, we have
				\begin{align*}
					&\| \chi_{1}(2^{-2j}\mathcal{L})f\|_{L^2}
					\lesssim \int_{\C^n}\left(\int_{\mathbb{C}^n} \left| \Phi_{\mathcal{N}_0}(G_j,z,w)\right|^{2} \, dz \right)^{1/2} |f(w)|\, dw.
				\end{align*}

				Using the Lemma \ref{lem:weighted-sup-estimate} to the kernel $G_{j}(z-w)$, we know that 
				\begin{align}\label{eq:nice-kernel-esti-pointwise}
					\left|{{\tilde{X}}^{\alpha}{\tilde{Y}}^{\beta}G_{j}(z-w)}\right| \lesssim \frac{2^{2jn}2^{j(|\alpha|+|\beta|)}}{(1+2^{j}|z-w|)^{N}}
				\end{align}
				for any large integer $N$ and similar estimates on other rearrangements of $\tilde{X}$ and $\tilde{Y}$.
				
				Therefore, using \eqref{eq:nice-kernel-esti-pointwise} with sufficiently large integer $N>0$ and following the similar method for getting estimate on $T_jf$ in \eqref{eq:pointwise-operator-subord-large-spectrum}, we get
				\begin{align}\label{eq:L2-esti-atom-cutoff}
					\| \chi_{1}(2^{-2j}\mathcal{L})f\|_{L^2} 
					\lesssim \, r^{\mathcal{N}_0+1} r^{-n(1/p-1)} M(\mathcal{N}_0)
				\end{align}
				where $$M(\mathcal{N}_0)=\sup_{w\in\C^n}\sup_{\alpha, \beta:|\alpha|+|\beta|=\mathcal{N}_0+1} \left( \int_{\mathbb{C}^n} \left|\frac{2^{2jn}2^{j(|\alpha|+|\beta|)}}{(1+2^{j}|z-w|)^{N}} \right|^2 dz \right)^{1/2}.$$
				Clearly, $M(\mathcal{N}_0)\lesssim 2^{j(\mathcal{N}_0+1)} 2^{jn}$.
				Using the estimates \eqref{eq:esti-near-singularity-2} and \eqref{eq:L2-esti-atom-cutoff}, we get 
				\begin{align*}
					& \sum_{j\geq N_0:\, 2^j \leq 1/r} 2^{-jp\delta(n,p)} \int_{A_2} \left|\chi_{1}(2^{-2j}\mathcal{L})T_jf(z)\right|^{p} dz\\
					\lesssim &  \sum_{j:\, 2^j \leq 1/r} 2^{-j(2n-1)(1/p-1/2)p} r^{1-p/2} \left( r^{\mathcal{N}_0+1} r^{-2n(1/p-1)} 2^{j(\mathcal{N}_0+1)} 2^{jn}\right)^{p}\\
					\lesssim & \, r^{1-p/2} r^{(\mathcal{N}_0+1-2n(1/p-1))p} r^{(2n-1)(1/p-1/2)p} r^{-(\mathcal{N}_0+1)p} r^{-np}=1.
				\end{align*}
				
				Therefore, we get $$\mathcal{J}_2 \lesssim 1$$
				and this completes the proof of $I_{21}$.
				
				Finally, we estimate $I_{22}$. Note that using the estimate \eqref{eq:weighted-sup-estimate}, we have
				\begin{align}\label{eq:remainder-kernel-esti-grad}
					\left|{{\tilde{X}}^{\alpha}{\tilde{Y}}^{\beta}K_{j, \Psi}(z)}\right| \lesssim 2^{2jn}2^{j(|\alpha|+|\beta|)}\left(1+2^j|z|\right)^{-N}
				\end{align}
				for all $N>0$ and $\alpha, \beta \in \mathbb{N}^n$. 
				{The proof of  $I_{22}$  follows a similar approach to that of $I_{21}$. Since we are estimating for those frequencies, for which $2^j \leq r^{-1}$, it is necessary to account for the higher-order cancellations of the atoms, just as we did for $I_{21}$. Additionally, we will incorporate the kernel estimates from \eqref{eq:remainder-kernel-esti-grad}. To keep our paper less technical, we will omit the detailed explanations here. Hence, this completes the proof of the theorem for large frequencies.} 
				
				For small frequencies, where $j<N_0$, it is sufficient to prove the estimate for a single $m_j$. We can express $m_{j}(\lambda)=\widetilde{m}_{j}(2^{-j}\lambda)$, where $\widetilde{m}_j(\lambda)=m(2^j\lambda)\phi(\lambda)$. This function  $\widetilde{m}_j(\lambda)$ is smooth and compactly supported on $[1/4,4]$. Thus, we can apply the Lemma \ref{lem:weighted-sup-estimate} to $\widetilde{m}_j$. The remaining estimate is similar to the remainder term $\Psi_{2^j}$.

				This completes the proof of the Theorem \ref{thm:Main-result}.

			\end{proof}

			\subsection{Proof of {Lemma} \ref{lem:pointwise-subord-kernel-esti}}
			
			Let 
			\begin{align*}
				K_{\tau}(z)= \sqrt{\tau} \int e^{i\tau/4s} e^{-i/4\cot(s/\tau)|z|^2} \frac{a_{\tau}(s)}{\sin (s/\tau)^{n}}  \, ds, \, \tau\geq 1, 
			\end{align*} 	
			where $a_{\tau}$ is given in Lemma \ref{lem:subordination-formula}.

			When $\tau=2^j$, $K_{\tau}$ is $K_j$ in the statement of Lemma \ref{lem:pointwise-subord-kernel-esti-tau}.
			The proof of the Lemma \ref{lem:pointwise-subord-kernel-esti} follows from the following lemma for the kernel $K_{\tau}$ {with} $\tau$ large. 
			
			\begin{lemma}\label{lem:pointwise-subord-kernel-esti-tau}
				For $\tau$ large, we have 
				\begin{align}\label{eq:pointwise-subord-kernel-esti-tau}
					\left|{\tilde{X}}^{\alpha}{\tilde{Y}}^{\beta}K_{\tau}(z)\right| \lesssim {\tau}^{(2n+1)/2+|\beta|+|\alpha|} \left( 1+\tau |1-|z||\right)^{-N}
				\end{align}
				for all $N>0$ and for all $z\in \mathbb{R}^n$ such that $|1-|z||>0$.
			\end{lemma}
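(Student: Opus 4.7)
The plan is to treat $K_\tau(z)$ as a one-dimensional oscillatory integral in $s$ on the compact interval where $a_\tau$ is supported, with amplitude $\sqrt{\tau}\,a_\tau(s)/\sin^n(s/\tau)$ and phase
\[
\psi(s,z)=\frac{\tau}{4s}-\frac{|z|^2}{4}\cot(s/\tau),
\]
and to exploit that $\psi$ is non-stationary in $s$ whenever $|z|\neq 1$. For $\tau\ge 1$ and $s\in[1/16,4]$ the ratio $s/\tau$ is small, so the expansions $\cot(s/\tau)=\tau/s-s/(3\tau)+O(s^3/\tau^3)$ and $\csc^2(s/\tau)=\tau^2/s^2+\tfrac{1}{3}+O(s^2/\tau^2)$ yield
\[
\partial_s\psi(s,z)=\frac{\tau(|z|^2-1)}{4s^2}+\frac{|z|^2}{12\tau}+O\!\Bigl(\frac{|z|^2 s^2}{\tau^3}\Bigr),
\]
so that, for $\tau$ sufficiently large, $|\partial_s\psi|\gtrsim \tau(1+|z|)|1-|z||$ on the support of $a_\tau$ whenever $\tau(1+|z|)|1-|z||\ge 1$, while each higher derivative $\partial_s^k\psi$ is controlled by $\tau(1+|z|^2)$ through a similar expansion.

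Next I would compute $\tilde X^\alpha\tilde Y^\beta K_\tau$ by differentiating under the integral sign. Since only the Gaussian factor $\exp(-\tfrac{i}{4}\cot(s/\tau)|z|^2)$ depends on $z$, each $\tilde X_j=\partial_{x_j}-\tfrac{i}{2}y_j$ or $\tilde Y_j=\partial_{y_j}+\tfrac{i}{2}x_j$ produces a polynomial factor of the form $-\tfrac{i}{2}(\cot(s/\tau)x_j+y_j)$ or $-\tfrac{i}{2}(\cot(s/\tau)y_j-x_j)$, together with a lower-order contribution coming from re-differentiating such factors. Since $\cot(s/\tau)\sim \tau/s$ on the support of $a_\tau$, each such factor is bounded in absolute value by $C\tau(1+|z|)$, and after $|\alpha|+|\beta|$ iterations $\tilde X^\alpha\tilde Y^\beta K_\tau(z)$ is a finite sum of oscillatory integrals in $s$ with the \emph{same} phase $\psi$ and with amplitudes dominated by $C\,\tau^{n+1/2}\bigl(\tau(1+|z|)\bigr)^{|\alpha|+|\beta|}$.

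I would then integrate by parts $M$ times in $s$ with the operator $L=\tfrac{1}{i\partial_s\psi}\partial_s$; each iteration of its transpose gains a factor $|\partial_s\psi|^{-1}\lesssim[\tau(1+|z|)|1-|z||]^{-1}$ and puts additional $\partial_s$-derivatives on the amplitude and on $1/\partial_s\psi$. The former are controlled by the uniform bounds on $\partial_s^k a_\tau$ of Lemma~\ref{lem:subordination-formula} together with differentiation of $\sin^{-n}(s/\tau)$, and the latter follow inductively from the bounds for $\partial_s^k\psi$; neither introduces new $\tau$- or $|z|$-growth on the support of $a_\tau$. One arrives at
\[
\bigl|\tilde X^\alpha\tilde Y^\beta K_\tau(z)\bigr|\lesssim \tau^{n+1/2}\bigl(\tau(1+|z|)\bigr)^{|\alpha|+|\beta|}\bigl(1+\tau(1+|z|)|1-|z||\bigr)^{-M}
\]
for every $M\ge 0$. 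Splitting into $|z|\le 2$, where $(1+|z|)$ is bounded and $\tau(1+|z|)|1-|z||\simeq \tau|1-|z||$, and $|z|>2$, where $|1-|z||\simeq |z|$ so that $\tau(1+|z|)|1-|z||\gtrsim \tau|z|$ and choosing $M=N+|\alpha|+|\beta|$ absorbs the factor $(1+|z|)^{|\alpha|+|\beta|}$, produces exactly the claimed bound $\tau^{(2n+1)/2+|\alpha|+|\beta|}(1+\tau|1-|z||)^{-N}$.

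The main obstacle I expect is the bookkeeping in the integration-by-parts step: one must verify that iterated applications of $L^{*}$ preserve, at each order, an estimate of the form $C_k[\tau(1+|z|)|1-|z||]^{-k}$ without inflating the polynomial powers of $\tau$ and $|z|$ already produced by $\tilde X^\alpha\tilde Y^\beta$. This reduces to a Leibniz expansion combined with the small-$s/\tau$ Taylor expansions used above, and is routine but tedious once the phase and amplitude estimates of the first two steps are in hand.
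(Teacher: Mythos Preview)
Your approach is essentially the same as the paper's: non-stationary phase in $s$ with the observation that $\partial_s\psi$ is bounded below by a multiple of $\tau(1+|z|)\,|1-|z||$. There is, however, one genuine gap in the bookkeeping you flag at the end. With the bound you state for higher derivatives, namely $|\partial_s^k\psi|\lesssim\tau(1+|z|^2)$, the integration by parts does \emph{not} gain a clean factor $[\tau(1+|z|)\,|1-|z||]^{-1}$ at each step: differentiating $1/\partial_s\psi$ produces $\partial_s^2\psi/(\partial_s\psi)^2$, and for bounded $|z|$ this has size $\tau/(\tau|1-|z||)^2=|1-|z||^{-1}\cdot|\partial_s\psi|^{-1}$, so you lose an extra $|1-|z||^{-1}$ at each iteration near $|z|=1$.

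The fix is immediate from the very expansion you already wrote for $\partial_s\psi$: applying the same Taylor expansion to all $k\ge 2$ gives
\[
\partial_s^k\psi(s,z)=\frac{(-1)^k k!}{4s^{k+1}}\,\tau(1-|z|^2)+O\!\bigl(|z|^2/\tau\bigr),
\]
so that $|\partial_s^k\psi|\lesssim \tau(1+|z|)|1-|z||$ whenever $\tau(1+|z|)|1-|z||\ge 1$ and $\tau$ is large. With this sharper bound the ratios $|\partial_s^{k}\psi|/|\partial_s\psi|$ are uniformly bounded and the iterated $L^*$ estimate goes through exactly as you describe. The paper avoids this issue slightly differently: it splits into $|z|\ge C(N)$ (where your crude bound already matches $|\partial_s\psi|$) and $|z|<C(N)$, and in the latter region writes $\psi(s,z)=\tau(1-|z|^2)/(4s)+O(s|z|^2/\tau)$, absorbing the bounded error $e^{iO(s/\tau)|z|^2}$ into the amplitude so that the phase is exactly of the form $\mu/s$ with $\mu=\tau(1-|z|^2)/4$. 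Either route closes the argument.
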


			\begin{proof}
				First, we write the kernel $K_{\tau}$ as 
				\begin{align*}
					K_{\tau}(z)&= \sqrt{\tau} \int e^{i\tau/4s} e^{-i/4\cot(s/\tau)|z|^2} \frac{a_{\tau}(s)}{\sin (s/\tau)^{n}}  ds\\
					& = \sqrt{\tau} \int e^{i\Psi_{\tau}(s,z,u)/4} \frac{a_{\tau}(s)}{\sin (s/\tau)^{n}} ds
				\end{align*}
				where 
				\begin{align*}
					\Psi_{\tau}(s,z)= \frac{\tau}{s}-\cot\left(\frac{s}{\tau}\right)|z|^2.
				\end{align*}
				
				Let us first assume $|z|>1$. We estimate \eqref{eq:pointwise-subord-kernel-esti-tau} by using integration by parts many times. To do that, we shall first see that for $|z|>1$, the derivative of the phase function $\Psi_{\tau}(s,z)$ in the variable $s$ is non zero.
				
				Now, taking derivative of $\Psi_{\tau}(s,z)$ with respect to $s$, we get
				\begin{align*}
					\partial_{s}\Psi_{\tau}(s,z)=-\frac{\tau}{s^2}+ \frac{cosec^{2}(s/\tau)}{\tau} |z|^2 .
				\end{align*}
				
				Since $cosec^{2}(\frac{s}{\tau})\geq (\frac{\tau}{s})^2$ for $s\sim 1$ and $\tau\geq 1$, 
				\begin{align*}
					\partial_{s}\Psi_{\tau}(s,z) \geq -\frac{\tau}{s^2}+ \frac{\tau}{s^2} |z|^2 \geq \frac{1}{16} \tau(|z|^2-1)\geq \frac{1}{8} \tau (|z|-1) >0
				\end{align*}
				for  $|z|>1$.
				
				From Lemma \ref{lem:subordination-formula} we know that	$|\partial_{s}^{N} a_{\tau}(s)| \lesssim 1$ for any $N$ uniformly in $\tau$. A simple computation gives that $|\partial_s^j\left(\sin(\frac{s}{\tau})\right)^{-n}|\lesssim_{j} \tau^{-n}$ uniformly in $s\sim 1$ and $\tau$ large. Also note that $$\left|\frac{\partial_s^{l+1}\Psi_{\tau}(s,z)}{\partial_s\Psi_{\tau}(s,z)}\right|$$ is bounded for $|z|,\tau$ large (depending on $l\geq 1$) and $s\sim 1$.

				Now using the integration by parts $N$ many times for any $N\geq 0$, we get
				\begin{align}\label{eq:kernel-estimate-near-infinity}
					|K_{\tau}(z)| \lesssim \tau^{\frac{2n+1}{2}}\tau^{-N}(|z|-1)^{-N}
				\end{align} for $|z|\geq C(N)>1$.
				Here, we also use the fact that $\sin\left(\frac{s}{\tau}\right)\sim \frac{s}{\tau}$ for $\tau$ large.
				
				Now consider $z$  such that $|z|< C(N)$. We know that for small $x$,
				\begin{align}\label{eq:cot-expansion}
					\cot x = \frac{1}{x}+ O(x).
				\end{align}
				
				Using \eqref{eq:cot-expansion}, we write the phase function $\Psi_{\tau}(s,z)$ as follows 
				\begin{align*}
					\Psi_{\tau}(s,z)= \tau/s-\cot(s/\tau)|z|^2 = \tau/s (1-|z|^2) + O(s/\tau)|z|^2
				\end{align*}
				for $\tau$ large.
				Therefore, 
				\begin{align*}
					K_{\tau}(z)&= \sqrt{\tau} \int e^{i\Psi_{\tau}(s,z)/4} \frac{a_{\tau}(s)}{\sin (s/\tau)^{n}}\, ds\\
					& = \sqrt{\tau} \int e^{i\tau(1-|z|^2)/4s} e^{iO(s/\tau) |z|^2}\frac{a_{\tau}(s)}{\sin (s/\tau)^{n}}\, ds \\
					& = \sqrt{\tau} \int e^{i\tau(1-|z|^2)/4s} \upsilon_{\tau}(s,z)\,  ds	
				\end{align*}
				where $ \upsilon_{\tau}(s,z)= e^{iO(s/\tau) |z|^2}\frac{a_{\tau}(s)}{\sin (s/\tau)^{n}}.$ Since $|z| <C(N)$ , $s\sim 1$ and $\tau$ large, observe that $|\partial_{s}^{N}\upsilon_{\tau}(s,z)| \lesssim_N \tau^n $. Now, using the above fact and integration by parts, we get 
				\begin{align}\label{eq:kernel-estimate-near-zero}
					|K_{\tau}(z)| \lesssim \tau^{\frac{2n+1}{2}}\tau^{-N}\left(|1-|z||\right)^{-N}
				\end{align}
				for all $N>0$. Hence the estimates \eqref{eq:kernel-estimate-near-infinity}
				and \eqref{eq:kernel-estimate-near-zero} together imply the estimate \eqref{eq:pointwise-subord-kernel-esti-tau} for $|\alpha|=|\beta|=0$.

				Without loss of generality, we can assume that $\beta=0$ and  $\alpha=e_j$, where $1$ lies in the $j$-th cordinate. First, observe that 
				\begin{align} \label{eq:derivative-kernel-estimate}
					& \tilde{X}_j\left(K_{\tau}\right) (z) \\
					\nonumber = & -\frac{i}{2} \text{Im}(z_{j})  K_{\tau}(z) -\frac{i}{2} \text{Re}(z_j)  \sqrt{\tau} \int e^{i\Psi_{\tau}(s,z)/4} \cot(s/\tau) \frac{a_{\tau}(s)}{\sin (s/\tau)^{n}}\, ds.
				\end{align}
				
				Since $|\text{Im}(z_{j})|\leq |z|$ and $\tau$ is large, the kernel estimates \eqref{eq:kernel-estimate-near-infinity} and \eqref{eq:kernel-estimate-near-zero} of $K_{\tau}(z)$ imply 
				\begin{align} \label{eq:kernel-esti-derivative-pointwise-1}
					|\text{Im}(z_{j})  K_{\tau}(z)| \lesssim \tau^{\frac{2n+1}{2}}\tau^{-N}\left(|1-|z||\right)^{-N}
				\end{align}
				for all $N>0$. 
				
				Now, we estimate the second term of the right side of \eqref{eq:derivative-kernel-estimate}. Note that $\cot(s/\tau)\sim \tau/s$ for $s\sim 1 $ and $\tau$ large. Since $|\text{Re}(z_j)| \leq |z| $, and using the same technique that we used to estimate the term $K_{\tau}(z)$, we have
				\begin{align}\label{eq:kernel-esti-derivative-pointwise-2}
					& \left|\text{Re}(z_j)  \sqrt{\tau} \int e^{i\Psi_{\tau}(s,z)/4} \cot(s/\tau) \frac{a_{\tau}(s)}{\sin (s/\tau)^{n}}\, ds \right| \\
					\nonumber & \lesssim \tau^{\frac{2n+1}{2}+1}\tau^{-N}(|1-|z||)^{-N}
				\end{align}
				for all $N>0$.
				
				Hence, finally the estimates \eqref{eq:kernel-esti-derivative-pointwise-1} and \eqref{eq:kernel-esti-derivative-pointwise-2} imply that
				\begin{align*}
					\left| \tilde{X}_j K_{\tau}(z)\right| \lesssim \tau^{\frac{2n+1}{2}+1}\tau^{-N}(|1-|z||)^{-N}
				\end{align*}
				for all $N>0$. This completes the proof of the Lemma \ref{lem:pointwise-subord-kernel-esti-tau}. 
			\end{proof}

			\begin{proof}[Proof of Lemma \ref{lem:weighted-sup-estimate}]
				{Recall that $p_t(z)=(4\pi)^{-n}(\sinh t)^{-n}e^{-\frac{1}{4}(\coth t)|z|^2}$ is the kernel of the heat semigroup $e^{-t\mathcal{L}}$, we can check that there exists $c>0$ (depending on $|\alpha|$ and $|\beta|$) such that
					\begin{align}\label{eq:heat-kernel}
						\left|{\tilde{X}}^{\alpha}{\tilde{Y}}^{\beta} p_t(z)\right| \lesssim t^{-n-|\alpha|-|\beta|} e^{-c\frac{|z|^2}{t}}
					\end{align}
					for all $\alpha, \beta \in \mathbb{N}^n$ and $z\in \mathbb{C}^n$.}
				
				{If $\eta$ is a smooth and compactly supported on $[R/4, 4R]$, $R>0$, then the spectral multiplier operator $\eta(\sqrt{\mathcal{L}})$ can be written as an integral operator given by twisted convolution with its kernel as $K_{\eta(\sqrt{\mathcal{L}})}(z)$. Therefore by Lemma $4.3$ of \cite{Bui-Duong-2021}, we have 
					\begin{align*}
						(1+R|z-w|)^{N}\left|K_{\eta(\sqrt{\mathcal{L}})}(z-w)\right| \lesssim_{N} |B(z, R^{-1})|^{-1} \|\eta(R\cdot )\|_{W^{\infty}_{N+\epsilon}(\mathbb{R})} 
					\end{align*}
					for any $\epsilon >0$, $R>0$ and $N>0$, where $W^{\infty}_{N+\epsilon}$ is a Sobolev space of order $N+\epsilon$.}
				
				{Using techniques similar to those in Lemma 4.3 of \cite{Bui-Duong-2021}, along with the heat kernel estimate \eqref{eq:heat-kernel}, for all \(\alpha, \beta \in \mathbb{N}^n\), we obtain
					\begin{align}\label{eq:multiplier-gen-grad}
						(1+R|z-w|)^{N}\left|{\tilde{X}}^{\alpha}{\tilde{Y}}^{\beta}K_{\eta(\sqrt{\mathcal{L}})}(z-w)\right| \lesssim_{N} R^{|\alpha|+|\beta|}|B(z, R^{-1})|^{-1} \|\eta(R\cdot )\|_{W^{\infty}_{N+\epsilon}(\mathbb{R})} 
					\end{align}
					for any $\epsilon >0$, $R>0$ and $N>0$.}
				
				{If we take $\eta(s)= m(R^{-2}s^2)$ in \eqref{eq:multiplier-gen-grad}, we get
					\begin{align*}
						(1+R|z-w|)^{N}\left|{\tilde{X}}^{\alpha}{\tilde{Y}}^{\beta}K_{m(R^{-2}\mathcal{L})}(z-w)\right| &\lesssim_{N} R^{|\alpha|+|\beta|} |B(z, R^{-1})|^{-1} \|m \|_{W^{\infty}_{N+\epsilon}}\\ &\lesssim_{m,N} R^{2n+|\alpha|+|\beta|}.
					\end{align*}
				}
				{Now, taking $R=2^j$	and $m=\Psi_{2^j}$ in the above estimate, we get
					\begin{align}\label{ker-grad-psi}
						\left|{\tilde{X}}^{\alpha}{\tilde{Y}}^{\beta}K_{\Psi_{2^j}(2^{-2j}\mathcal{L})}(z-w)\right| \lesssim_{N} \frac{2^{j(2n+|\alpha|+|\beta|)} }{(1+2^j|z-w|)^{N}}
					\end{align}
					for all $\alpha, \beta \in \mathbb{N}^n$ and $N>0$.}
				
				In fact in the notation of Lemma \ref{lem:weighted-sup-estimate}, we have 
					\begin{align*}
						&K_{\Psi_{2^j}(2^{-2j}\mathcal{L})}(z- w)= K_{j,\Psi}(z-w).
					\end{align*}
				
				{Finally, using the above observations and the estimate \eqref{ker-grad-psi}, we get the desired estimates for the kernel $K_{j,\Psi}$.}
			\end{proof}
			
			
			\section{Appendix}\label{Sec:Appendix}
			In this section, we give the proofs of Lemma \ref{lem:compare-maximal-function}, Lemma \ref{lem:compar-maximal and atomic decomp} and  Lemma \ref{lem:comp-N-max-non-tang}.\\

			Proof of Lemma \ref{lem:compare-maximal-function}:	
			\begin{proof}
				As twisted translation commutes with twisted convolution, it suffices to take $z_{0}=0$. Let $\varphi \in \mathcal{S}_{N}$ for some fixed and large $N$. For $0<t<\sigma$, we write
				\begin{align*}
					f\ast\varphi_{t}(z) = &  \int \varphi_{t}(z-w)f(w)\, dw\\
					= & \int \varphi_{t}(z-w) e^{\frac{i}{2}Im(z\cdot \overline{z-w})} f(w) e^{\frac{i}{2}Im(z\cdot \bar{w})}\, dw\\
					= & f\times\left(\psi^{z}\right)_{t}(z)	
				\end{align*} 
				where $\psi^{z}(w)= \varphi(w) e^{\frac{i}{2}Im (z\cdot \overline{tw})}$.
				
				We know that $\supp \varphi \subset Q(0,1)$ and $\supp f \subset Q(0,\sigma)$. For $0<t<\sigma$, we get
				\begin{align*}
					f\ast\varphi_{t}(z) = 0 \quad \text{if}\, z\in Q(0,2\sigma)^c.	
				\end{align*}
				
				We can find a constant $C(\sigma)$ independent of $f$ such that $C(\sigma)^{-1}\psi^{z}\in S_{N}$ for all $z\in Q(0,2\sigma)^c$. Therefore 
				\begin{align*}
					\widetilde{\mathcal{M}}_{\sigma}f(z)\leq C(\sigma) \mathcal{M}_{\sigma}f(z)
				\end{align*}
				for all $z\in \C^n$.
				Similarly, we can prove that 
				\begin{align*}
					\mathcal{M}_{\sigma}f(z)\leq C(\sigma) \widetilde{\mathcal{M}}_{\sigma}f(z).
				\end{align*}
				
				Therefore,
				\begin{align*}
					C(\sigma)^{-1}\mathcal{M}_{\sigma}f(z) \leq \widetilde{\mathcal{M}}_{\sigma}f(z) \leq C(\sigma) \mathcal{M}_{\sigma}f(z). 	
				\end{align*} 
				Note that $C(\sigma)$ is a polynomial in $\sigma$ of degree atmost $2N$.
			\end{proof}

			Proof of Lemma \ref{lem:compar-maximal and atomic decomp}:
			\begin{proof}
				Using the partition of unity $\{\zeta_{j}\}$, we write $f$ as $f=\sum_{j}f\zeta_{j}$. Since $\mathcal{M}_{\sigma}$ is sublinear, we have $\mathcal{M}_{\sigma}f(z)\leq \sum_{j} \mathcal{M}_{\sigma}(f\zeta_{j})(z)$. This implies $\|\mathcal{M}_{\sigma}f\|
				_{p}^p\leq \sum_{j} \|\mathcal{M}_{\sigma}(f\zeta_{j})\|_{p}^p$. Therefore, by Lemma \ref{lem:compare-maximal-function}, we get $\|\mathcal{M}_{\sigma}f\|_{p}^p\leq C(\sigma) \sum_{j} \|g_{j}\|_{h^p_{\sigma}}^p$. 
				
				Now, we prove the converse part. For any $\varphi \in \mathcal{S}_{N}$, we can write
				\begin{align*}
					\left(f\zeta_{j}\right)\times\varphi_{t}(z)= {f\times(\psi_t^{z})_{t}(z)},
				\end{align*}
				where $\psi_t^{z}(w)= \varphi(w) \zeta_{j}(z-tw)$. Since $0<t<\sigma$ and $\supp \zeta_{j} \subset Q(z_{j}, \sigma)$, there exist a constant $C^{'}$ independent of $\sigma$ and $j$ such that $C^{'}\psi_t^{z}\in \mathcal{S}_{N}$ for all $0<t<\sigma$. This shows that $\mathcal{M}_{\sigma}(f\zeta_{j})(z) \leq C^{'} \mathcal{M}_{\sigma}f(z)$. Since $\mathcal{M}_{\sigma}f(z)\in L^{p}(\mathbb{C}^n)$, by Lemma \ref{lem:compare-maximal-function}, we get that $g_{j}\in  {h^p_{\sigma}(\mathbb{C}^n)}$ and 
				\begin{align*}
					\sum_{j} \|g_{j}\|_{h^p_{\sigma}}^p\leq & C(\sigma) \sum_{j} \|\mathcal{M}_{\sigma}(f\zeta_{j})\|_{p}^p\\
					\leq & C(\sigma)\sum_{j} \int \mathcal{M}_{\sigma}f(z)^p \chi_{E_{j}}(z)\, dz
				\end{align*}
				where $E_{j}$ is the set where $\mathcal{M}_{\sigma}(f\zeta_{j})$ is supported. Infact $E_j\subset Q(z_j,2\sigma)$. Since any point in $\mathbb{C}^n$ belongs to at most a finite number of such sets independent of $f$, we get
				\begin{align*}
					\sum_{j} \|g_{j}\|_{h^p_{\sigma}}^p\lesssim_{n}C(\sigma) \|\mathcal{M}_{\sigma}(f)\|_{p}^p.
				\end{align*}
				This completes the proof of the Lemma \ref{lem:compar-maximal and atomic decomp}.
			\end{proof}
			
			Proof of Lemma \ref{lem:comp-N-max-non-tang}.
			
			\begin{proof}
				We shall first show that for $w\in \mathbb{C}^n$, $t>0$, $N\geq0$,
				\begin{align} \label{eq:point-comp-N-max-non-tang}
					\left| e^{-t^2\mathcal{L}}f(z-w)\right|^{p} \left( 1+\frac{|w|}{t}\right)^{-Np} \leq \sum_{k=0}^{\infty}2^{(1-k)Np} \sup_{|w|<2^k t} \left| e^{-t^2\mathcal{L}}f(z-w)\right|. 
				\end{align}	
				To see this, when $|w|<t$, the term $k=0$ of the right side of \eqref{eq:point-comp-N-max-non-tang} dominates the left side. For $k\geq 1$, when $2^{k-1}t\leq |w| < 2^{k}t$, then $k$-th term of the right side dominates the left side of \eqref{eq:point-comp-N-max-non-tang}. This completes the proof of \eqref{eq:point-comp-N-max-non-tang}. 
				
				Now, from the estimate (25) of Chapter II of \cite{Stein-book-1993}, we get 
				\begin{align*}
					|\{ z: \sup_{|w|<2^k t} \left| e^{-t^2\mathcal{L}}f(z-w)\right| > \rho \}| \leq C (1+2^{k})^{2n} |\{ z: \sup_{|w|< t} \left| e^{-t^2\mathcal{L}}f(z-w)\right| > \rho \}|\,\, 
				\end{align*}	
				for all $\rho>0$ and $C=c_{2n}$. 

				The above estimate implies 
				\begin{align}\label{eq:dilated-nontan-comp-nontan}
					\int_{\mathbb{C}^n} \left( \sup_{|w|< 2^{k}t} \left| e^{-t^2\mathcal{L}}f(z-w)\right| \right)^{p} \, dz \leq C (1+2^{k})^{2n} \int_{\mathbb{C}^n} \left( \sup_{|w|< t} \left| e^{-t^2\mathcal{L}}f(z-w)\right| \right)^{p} \, dz .
				\end{align}
				
				Now, using \eqref{eq:point-comp-N-max-non-tang} and \eqref{eq:dilated-nontan-comp-nontan} and choosing $N>2n/p$, we get
				\begin{align*}
					\|M^{**}_{\CL,N} f\|_{p}  \leq C \sum_{k=0}^{\infty} &  (1+2^{k})^{2n} 2^{(1-k)Np} \|M^{*}_{\CL} f\|_{p}\\
					& \leq C_{N,p} \|M^{*}_{\CL} f\|_{p}.
				\end{align*}
				The last inequality follows from the fact that $N>2n/p$. This completes the proof of the Lemma \ref{lem:comp-N-max-non-tang}. 
			\end{proof}

			{	
				\section*{Acknowledgments}
				The authors would like to thank The Anh Bui for going through the manuscript and giving useful comments to improve the presentation of the paper.					 R.~Basak is supported by the National Postdoctoral Fellowship 
					(File no-PDF/2022/001443) from the Science and Engineering Research Board, Government of India and the National Science and Technology Council of Taiwan under research grant numbers 113-2811-M-003-007/113-2811-M-003-039.

			}	
			
			\bibliography{Hardy-wave-twisted-Laplacian-arxiv}

\newcommand{\etalchar}[1]{$^{#1}$}
\providecommand{\bysame}{\leavevmode\hbox to3em{\hrulefill}\thinspace}
\providecommand{\MR}{\relax\ifhmode\unskip\space\fi MR }
\providecommand{\MRhref}[2]{%
  \href{http://www.ams.org/mathscinet-getitem?mr=#1}{#2}
}
\providecommand{\href}[2]{#2}
\begin{thebibliography}{HLM{\etalchar{+}}11}

\bibitem[BD21]{Bui-Duong-2021}
The~Anh Bui and Xuan~Thinh Duong, \emph{Spectral multipliers of self-adjoint operators on {B}esov and {T}riebel-{L}izorkin spaces associated to operators}, Int. Math. Res. Not. IMRN (2021), no.~23, 18181--18224. \MR{4349231}

\bibitem[BDD24]{Bui}
The~Anh Bui, Piero D'Ancona, and Xuan~Thinh Duong, \emph{On sharp estimates for {S}chr\"odinger groups of fractional powers of nonnegative self-adjoint operators}, J. Differential Equations \textbf{381} (2024), 260--292. \MR{4672180}

\bibitem[BDHH23]{Bui-Duong-Hermite-IMRN}
The~Anh Bui, Xuan~Thinh Duong, Qing Hong, and Guorong Hu, \emph{On {S}chr\"odinger groups of fractional powers of {H}ermite operators}, Int. Math. Res. Not. IMRN (2023), no.~7, 6164--6185. \MR{4565709}

\bibitem[BDL20]{Bui-Duong-Ly-Hardy-finitemeasure}
The~Anh Bui, Xuan~Thinh Duong, and Fu~Ken Ly, \emph{Maximal function characterizations for {H}ardy spaces on spaces of homogeneous type with finite measure and applications}, J. Funct. Anal. \textbf{278} (2020), no.~8, 108423, 55. \MR{4056995}

\bibitem[BDT09]{BDT-Hardy-Bessel}
Jorge~J. Betancor, Jacek Dziuba\'nski, and Jose~Luis Torrea, \emph{On {H}ardy spaces associated with {B}essel operators}, J. Anal. Math. \textbf{107} (2009), 195--219. \MR{2496404}

\bibitem[BLU07]{AB}
A.~Bonfiglioli, E.~Lanconelli, and F.~Uguzzoni, \emph{Stratified {L}ie groups and potential theory for their sub-{L}aplacians}, Springer Monographs in Mathematics, Springer, Berlin, 2007. \MR{2363343}

\bibitem[CFS10]{Chen-Fan-Sun-compact-wave}
Jiecheng Chen, Dashan Fan, and Lijing Sun, \emph{Hardy space estimates for the wave equation on compact {L}ie groups}, J. Funct. Anal. \textbf{259} (2010), no.~12, 3230--3264. \MR{2727645}

\bibitem[Coi74]{Coif1}
Ronald~R. Coifman, \emph{A real variable characterization of {$H\sp{p}$}}, Studia Math. \textbf{51} (1974), 269--274. \MR{358318}

\bibitem[DL13]{Duong-Li-Hardy}
Xuan~Thinh Duong and Ji~Li, \emph{Hardy spaces associated to operators satisfying {D}avies-{G}affney estimates and bounded holomorphic functional calculus}, J. Funct. Anal. \textbf{264} (2013), no.~6, 1409--1437. \MR{3017269}

\bibitem[DPR10]{Ricci-DAncona-wave-twisted-JFAA-2010}
Piero D'Ancona, Vittoria Pierfelice, and Fulvio Ricci, \emph{On the wave equation associated to the {H}ermite and the twisted {L}aplacian}, J. Fourier Anal. Appl. \textbf{16} (2010), no.~2, 294--310. \MR{2600961}

\bibitem[DZ03]{Dzuibanksy-Hardy-Schrodinger}
Jacek Dziuba\'nski and Jacek Zienkiewicz, \emph{{$H^p$} spaces associated with {S}chr\"odinger operators with potentials from reverse {H}\"older classes}, Colloq. Math. \textbf{98} (2003), no.~1, 5--38. \MR{2032068}

\bibitem[FS72]{FS}
C.~Fefferman and E.~M. Stein, \emph{{$H\sp{p}$} spaces of several variables}, Acta Math. \textbf{129} (1972), no.~3-4, 137--193. \MR{447953}

\bibitem[FS82]{Folland-Stein-book}
G.~B. Folland and Elias~M. Stein, \emph{Hardy spaces on homogeneous groups}, Mathematical Notes, vol.~28, Princeton University Press, Princeton, NJ; University of Tokyo Press, Tokyo, 1982. \MR{657581}

\bibitem[Gol79]{Goldberg-1979}
David Goldberg, \emph{A local version of real {H}ardy spaces}, Duke Math. J. \textbf{46} (1979), no.~1, 27--42. \MR{523600}

\bibitem[HHL{\etalchar{+}}19]{HHLLYY-2019}
Ziyi He, Yongsheng Han, Ji~Li, Liguang Liu, Dachun Yang, and Wen Yuan, \emph{A complete real-variable theory of {H}ardy spaces on spaces of homogeneous type}, J. Fourier Anal. Appl. \textbf{25} (2019), no.~5, 2197--2267. \MR{4014799}

\bibitem[HLM{\etalchar{+}}11]{HLMMY-Hardy-Memoirs}
Steve Hofmann, Guozhen Lu, Dorina Mitrea, Marius Mitrea, and Lixin Yan, \emph{Hardy spaces associated to non-negative self-adjoint operators satisfying {D}avies-{G}affney estimates}, Mem. Amer. Math. Soc. \textbf{214} (2011), no.~1007, vi+78. \MR{2868142}

\bibitem[Hua09]{Huang-2009}
Jizheng Huang, \emph{Some characterizations of {H}ardy spaces associated with twisted convolution}, Bull. Aust. Math. Soc. \textbf{79} (2009), no.~3, 405--417. \MR{2505346}

\bibitem[HW14]{Huang-Wang-2014}
Jizheng Huang and Jietong Wang, \emph{{$\scr{H}^p$}-boundedness of {W}eyl multipliers}, J. Inequal. Appl. (2014), 2014:422, 9. \MR{3359118}

\bibitem[JT14]{Kaur-Thangavelu-Grushin-wave}
Kaur Jotsaroop and Sundaram Thangavelu, \emph{{$L^p$} estimates for the wave equation associated to the {G}rushin operator}, Ann. Sc. Norm. Super. Pisa Cl. Sci. (5) \textbf{13} (2014), no.~3, 775--794. \MR{3331528}

\bibitem[KST82]{Kenig-Stanton-Tomas}
C.~E. Kenig, R.~J. Stanton, and P.~A. Tomas, \emph{Divergence of eigenfunction expansions}, J. Functional Analysis \textbf{46} (1982), no.~1, 28--44. \MR{654463}

\bibitem[Lat78]{Latter}
Robert~H. Latter, \emph{A characterization of {$H\sp{p}({\bf R}\sp{n})$} in terms of atoms}, Studia Math. \textbf{62} (1978), no.~1, 93--101. \MR{482111}

\bibitem[Miy80]{Miyachi-wave}
Akihiko Miyachi, \emph{On some estimates for the wave equation in {$L\sp{p}$}\ and {$H\sp{p}$}}, J. Fac. Sci. Univ. Tokyo Sect. IA Math. \textbf{27} (1980), no.~2, 331--354. \MR{586454}

\bibitem[MM24]{Martini-Muller-wave}
Alessio Martini and Detlef M\"uller, \emph{{A}n {FIO}-based approach to {$L^p$}-bounds for the wave equation on 2-step {C}arnot groups: the case of m\'etivier groups}, Arxiv (2024), https://doi.org/10.48550/arXiv.2406.04315.

\bibitem[MPR81]{Mauceri-Picardello-Ricci-Hardy-space-1981}
Giancarlo Mauceri, Massimo~A. Picardello, and Fulvio Ricci, \emph{A {H}ardy space associated with twisted convolution}, Adv. Math. \textbf{39} (1981), no.~3, 270--288. \MR{614164}

\bibitem[MS99]{Muller-Stein-wave}
Detlef M\"uller and Elias~M. Stein, \emph{{$L^p$}-estimates for the wave equation on the {H}eisenberg group}, Rev. Mat. Iberoamericana \textbf{15} (1999), no.~2, 297--334. \MR{1715410}

\bibitem[MS15]{Segeer-Muller-wave-APDE-2015}
Detlef M\"uller and Andreas Seeger, \emph{Sharp {$L^p$} bounds for the wave equation on groups of {H}eisenberg type}, Anal. PDE \textbf{8} (2015), no.~5, 1051--1100. \MR{3393673}

\bibitem[NT01]{Naru-Thangevelu-2001}
E.~K. Narayanan and S.~Thangavelu, \emph{Oscillating multipliers for some eigenfunction expansions}, J. Fourier Anal. Appl. \textbf{7} (2001), no.~4, 373--394. \MR{1836819}

\bibitem[Per80]{Peral-wave}
Juan~C. Peral, \emph{${L\sp{p}}$\ estimates for the wave equation}, J. Funct. Anal. \textbf{36} (1980), no.~1, 114--145. \MR{568979}

\bibitem[RS75]{Reed-Simon}
Michael Reed and Barry Simon, \emph{Methods of modern mathematical physics. {II}. {F}ourier analysis, self-adjointness}, Academic Press [Harcourt Brace Jovanovich, Publishers], New York-London, 1975. \MR{493420}

\bibitem[RT03]{RT-cont-magnetic}
G.~Rozenblum and G.~Tashchiyan, \emph{Riesz {$L_p$} summability of spectral expansions related to the {S}chr\"odinger operator with constant magnetic field}, J. Math. Anal. Appl. \textbf{284} (2003), no.~1, 315--331. \MR{1996135}

\bibitem[SSS91]{Seeger-Sogge-Stein}
Andreas Seeger, Christopher~D. Sogge, and Elias~M. Stein, \emph{Regularity properties of {F}ourier integral operators}, Ann. of Math. (2) \textbf{134} (1991), no.~2, 231--251. \MR{1127475}

\bibitem[Ste93]{Stein-book-1993}
Elias~M. Stein, \emph{Harmonic analysis: real-variable methods, orthogonality, and oscillatory integrals}, Princeton Mathematical Series, vol.~43, Princeton University Press, Princeton, NJ, 1993, With the assistance of Timothy S. Murphy, Monographs in Harmonic Analysis, III. \MR{1232192}

\bibitem[SY16]{SongYan}
Liang Song and Lixin Yan, \emph{A maximal function characterization for {H}ardy spaces associated to nonnegative self-adjoint operators satisfying {G}aussian estimates}, Adv. Math. \textbf{287} (2016), 463--484. \MR{3422683}

\bibitem[Tha93]{Thangavelu-yellow-book}
Sundaram Thangavelu, \emph{Lectures on {H}ermite and {L}aguerre expansions}, Mathematical Notes, vol.~42, Princeton University Press, Princeton, NJ, 1993, With a preface by Robert S. Strichartz. \MR{1215939}

\bibitem[Tha04]{Thangavelu-book-uncertainty}
\bysame, \emph{An introduction to the uncertainty principle}, Progress in Mathematics, vol. 217, Birkh\"auser Boston, Inc., Boston, MA, 2004, Hardy's theorem on Lie groups, With a foreword by Gerald B.\ Folland. \MR{2008480}

\end{thebibliography}
			\bibliographystyle{amsalpha}

		\end{document}